\documentclass[a4paper,reqno]{amsart}
\usepackage[british]{babel}
\usepackage{amssymb,amsmath,amsbsy,mathtools}
\usepackage{hyphenat}
\usepackage{tikz}
\usepackage{upgreek}
\usepackage{dsfont}
\usepackage{graphicx,color,nicefrac} 
\usepackage{amsfonts}
\usepackage[normalem]{ulem}
\usepackage[export]{adjustbox}
\usepackage[hidelinks]{hyperref}
\usepackage{csquotes}

\newtheorem{theorem}{Theorem}[section]
\newtheorem{lemma}[theorem]{Lemma}
\newtheorem{corollary}[theorem]{Corollary}

\newtheorem{remark}[theorem]{Remark}

\theoremstyle{definition}
\newtheorem{definition}[theorem]{Definition}
\newtheorem{assumption}[theorem]{Assumption}

\newcommand{\Det}{\operatorname{det}}
\newcommand{\dint}{\thickspace \mathrm{d}}
\renewcommand{\vec}{\boldsymbol}
\renewcommand{\subset}{\subseteq}

\renewcommand{\epsilon}{\varepsilon}

\numberwithin{equation}{section}
\numberwithin{theorem}{section}
\usepackage[foot]{amsaddr}

\usetikzlibrary{positioning}

\newcommand{\Nbbb}{\mathbb{N}}
\newcommand{\Rbbb}{\mathbb{R}}

\newcommand{\Abfm}{\mathbf{A}}
\newcommand{\Dbfm}{\mathbf{D}}
\newcommand{\Lbfm}{\mathbf{L}}

\newcommand{\dbfm}{\mathbf{d}}
\newcommand{\gbfm}{\mathbf{g}}
\newcommand{\jbfm}{\mathbf{j}}
\newcommand{\kbfm}{\mathbf{k}}
\newcommand{\qbfm}{\mathbf{q}}
\newcommand{\sbfm}{\mathbf{s}}
\newcommand{\tbfm}{\mathbf{t}}
\newcommand{\ubfm}{\mathbf{u}}
\newcommand{\vbfm}{\mathbf{v}}

\newcommand{\Bcal}{\mathcal{B}}
\newcommand{\Fcal}{\mathcal{F}}
\newcommand{\Ical}{\mathcal{I}}
\newcommand{\Jcal}{\mathcal{J}}
\newcommand{\Lcal}{\mathcal{L}}
\newcommand{\Ocal}{\mathcal{O}}

\newcommand{\Bfrak}{\mathfrak{B}}
\newcommand{\Hfrak}{\mathfrak{H}}

\newcommand{\xvec}{\vec{x}}

\newcommand{\jx}{j_\mathrm{x}}
\newcommand{\jy}{j_\mathrm{y}}
\newcommand{\kx}{k_\mathrm{x}}
\newcommand{\ky}{k_\mathrm{y}}
\newcommand{\mx}{\mathrm{m}_\mathrm{x}}
\newcommand{\my}{\mathrm{m}_\mathrm{y}}
\newcommand{\sx}{s_\mathrm{x}}
\newcommand{\sy}{s_\mathrm{y}}
\newcommand{\tx}{t_\mathrm{x}}
\newcommand{\ty}{t_\mathrm{y}}

\newcommand{\Mx}{\mathrm{M}_{\mathrm{x}}}
\newcommand{\My}{\mathrm{M}_{\mathrm{y}}}

\newcommand{\deltax}{\delta_\mathrm{x}}
\newcommand{\deltay}{\delta_\mathrm{y}}

\newcommand{\lambdax}{\lambda_\mathrm{x}}
\newcommand{\lambday}{\lambda_\mathrm{y}}
\newcommand{\sigmax}{\sigma_\mathrm{x}}
\newcommand{\sigmay}{\sigma_\mathrm{y}}

\newcommand{\minm}{\mathrm{m}}
\newcommand{\maxm}{\mathrm{M}}

\newcommand{\fxt}{\tilde{f}_{\mathrm{x}}}
\newcommand{\fxb}{\bar{f}_{\mathrm{x}}}

\newcommand{\fyb}{\bar{f}_{\mathrm{y}}}

\newcommand{\kappaxt}{\tilde{\kappa}_{\mathrm{x}}}

\newcommand{\alphabfm}{\boldsymbol{\upalpha}}
\newcommand{\gammabfm}{\boldsymbol{\upgamma}}
\newcommand{\lambdabfm}{\boldsymbol{\lambda}}
\newcommand{\zerobfm}{\boldsymbol{0}}
\newcommand{\onebfm}{\boldsymbol{1}}
\newcommand{\nablabfm}{\boldsymbol{\nabla}}

\newcommand{\tauvec}{\vec{\tau}}
\newcommand{\xivec}{\vec{\xi}}

\newcommand{\Dparam}[2]{\mathcal{D}_{#1,#1'}^{({\rm #2})}}
\newcommand{\Eparam}[2]{\mathcal{E}_{#1,#1'}^{({\rm #2})}}
\newcommand{\Fparam}[2]{\mathcal{F}_{#1,#1'}^{({\rm #2})}}

\DeclareMathOperator{\Diam}{diam}
\DeclareMathOperator{\Dist}{dist}
\DeclareMathOperator*{\esssup}{ess\,sup}
\DeclareMathOperator{\Sign}{sign}
\DeclareMathOperator{\Singsupp}{sing\,supp}
\DeclareMathOperator{\Supp}{supp}

\newcommand{\isdef}{\mathrel{\mathrel{\mathop:}=}}

\DeclarePairedDelimiter\groupa{\langle}{\rangle}%
\DeclarePairedDelimiter\groupp{(}{)}%
\DeclarePairedDelimiter\norms{\lvert}{\rvert}%

\definecolor{cmred}{RGB}{190,64,64}
\definecolor{cmblue}{RGB}{64,97,190}

\title[Integral Operators in Anisotropic Wavelet
Coordinates]{On the Compressibility of Integral Operators in Anisotropic Wavelet
Coordinates}
\author{Helmut Harbrecht}
\author{Remo von Rickenbach}
\address{ Universit\"at Basel\\
	Departement Mathematik und Informatik\\
	Spiegelgasse 1\\
	4051 Basel\\
Switzerland}
\email{\{helmut.harbrecht,remo.vonrickenbach\}@unibas.ch}

\begin{document}

\begin{abstract}
	The present article is concerned with the \(s^\star\)-compressibility
	of classical boundary integral operators which are discretised by 
	using anisotropic tensor-product wavelets as trial functions.
	Having the \(s^\star\)-compressibility at hand, 
	one can design adaptive wavelet algorithms 
	which are asymptotically optimal, 
	meaning that any target accuracy can be achieved at a computational expense 
	that stays proportional to the number of degrees of freedom 
	(within the setting determined by an underlying wavelet basis) 
	that would ideally be necessary for realising that target accuracy 
	if full knowledge about the unknown solution were given. 
	As we consider anisotropic wavelet bases,
	we can achieve higher convergence rates compared to the standard, 
	isotropic setting. 
	Especially, edge singularities of anisotropic nature can be resolved.
\end{abstract}

\maketitle

\section{Introduction}\label{sec:Introduction}

Many problems from physics and engineering require the solution of an operator
equation
\begin{equation}
	\Lcal u = f,
	\label{eq:operator_equation}
\end{equation}
where \(\Lcal\colon V \to V'\) is a continuous and uniformly elliptic operator.
For the approximation of the unknown solution \(u \in V\),
there is a variety of approaches and algorithms which are well-understood.
In particular, if the solution \(u\) is sufficiently smooth
and the operator \(\Lcal\) is local,
finite element approximation schemes \cite{Bra13,BS08}
combined with multigrid methods \cite{BH83,Hac81}
deliver approximants within linear computing time.
On the other hand, if the operator \(\Lcal\) is nonlocal,
there is a need for approximating the system matrix resulting from a Galerkin
discretisation. Adaptive cross approximation \cite{BR03}, panel clustering
\cite{HN89,Sau00}, or the fast multipole method \cite{GR87} result in linear 
or log-linear complexity. Moreover, wavelet matrix compression is known 
to admit linear complexity for both isotropic \cite{DHS06,Sch98} and 
anisotropic tensor-product wavelets
\cite{HvR24}.

However, if the solution \(u\) provides only limited regularity, the 
classical approaches based on uniform mesh refinement usually do
not converge with the best possible rate since local mesh refinement 
is not possible in this \emph{linear approximation} method.
Therefore, there is a need for adaptive schemes in this case, 
which fall under the framework of \emph{nonlinear
approximation} \cite{DeV98}, 
where the term nonlinear refers to the nonlinearity
of the trial spaces involved.
An example of nonlinear approximation is \emph{best \(N\)-term approximation},
where a function is approximated from a trial space consisting of linear 
combinations of at most \(N\) terms from a dictionary.
Since in this setting, 
all possible approximations from linear trial spaces of dimension \(N\)
can be constructed,
it is immediately clear that the best nonlinear approximant is at least 
as accurate as the best linear approximant.

The dictionary involved can be realised by a wavelet basis,
cf.\ \cite{Dah97,Dau92,DeV98,Sch98}.
Since wavelet bases are especially Riesz bases, 
approximating a function is equivalent to approximating its coefficient vector.
This means in particular that (a rearrangement by decreasing absolute value of)
the coefficients of a function \(u\) must decay at a certain rate to
ensure that \(u\) is well-approximable.
In general, the required decay to achieve a rate \(N^{-s}\) with \(N\) terms 
is much weaker for a nonlinear approximation scheme than 
for a linear approximation scheme, 
cf.\ \cite{DeV98} and the references therein.

If \eqref{eq:operator_equation} is discretised by a Riesz basis, the resulting
infinite linear system of equations, involving a 
bi-infinite Galerkin matrix \(\Lbfm\), is well-conditioned. 
On the assumption that the bi-infinite matrix \(\Lbfm\) is fully known, 
or equivalently, 
that each matrix entry can be calculated in \(\Ocal(1)\) operations, 
algorithms were developed which approximate the 
coefficient vector \(\ubfm\) with \(N\) terms at the rate \(N^{-s}\) 
whenever \(u\in V\) allows for this rate, cf.\ the original articles
\cite{CDD01,CDD02} and also the overview article \cite{Ste09}.
In particular, this means that we can approximate the solution \(u\) of
\eqref{eq:operator_equation} at the same rate as if full knowledge on
\(u\) was provided.
The requirement that such algorithms can be realised is that the 
bi-infinite matrix \(\Lbfm\) is sufficiently compressible. 
More precisely, \(\Lbfm\) must be
\emph{\(s^{\star}\)-compressible}, meaning that 
for any \(r\in\Nbbb\) there must exist a bi-infinite 
matrix \(\Lbfm_r\) such that in every row and column of \(\Lbfm_r\)
there are asymptotically only \(\alpha_r 2^r\) nontrivial entries, 
and that the overall error \(\|\Lbfm -\Lbfm_r\|_2\) decays at the 
rate \(\beta_r 2^{-sr}\) for any \(s < s^{\star}\). 
Herein, \((\alpha_r)_r\) and \((\beta_r)_r\) 
denote arbitrary summable sequences.

By construction, the best possible approximation rate in the
Sobolev space \(H^{q}\), in \(n\) spatial dimensions, is 
\(N^{-\frac{d-q}{n}}\) for piecewise polynomial trial functions 
of order \(d\), so if \(\Lbfm\) is \emph{\(s^\star\)-compressible} 
for some \(s^\star > \frac{d-q}{n}\), the solution \(u\) of 
\eqref{eq:operator_equation} can be approximated in \(\Ocal(N)\) 
operations at the rate of the best \(N\)-term approximation 
which can be achieved with the given basis at all. This 
is hence \emph{optimal}. Note that realisations of respective adaptive
algorithms can be found in 
\cite{CDD01,CDD02,DSS09,GHS07,KSU16,KS13,KS14,SS09},
for example.

For an isotropic wavelet basis \(\Psi\), \(s^\star\)-compressibility
has been verified in \cite{Ste04}, 
where \(\Lcal\) cannot only be a partial differential operator, 
but also a singular boundary integral operator as it is considered here. 
Since singular boundary integral operators are nonlocal, 
the associated matrix \(\Lbfm\) is densely populated,
so this seems surprising at the first glance.
However, because the trial functions involved are wavelets,
they admit a \emph{cancellation property} which implies that the entries
decay with the difference in levels of the involved wavelets.
Of course, there are certain requirements to the order \(d\) 
and the number of vanishing moments \(\tilde{d}\) of the 
wavelet basis \(\Psi\), cf.\ \cite{Ste04} for the details. Nonetheless,
it was shown in \cite{DHS07,GS06} that optimality can indeed be 
realised numerically. 
Earlier, under higher regularity assumptions to the solution, 
the same was verified also for a linear approximation scheme \cite{DHS06}.

The drawback of an isotropic wavelet basis is that singularities of
anisotropic kind, such as inward-directed edges, 
cannot be resolved optimally, see e.g.\ \cite{HU18,Utz16}.
Therefore, it seems natural to use 
anisotropic wavelets to discretise \(V\). Anisotropic wavelets for 
boundary integral equations were first considered in \cite{GOS99} 
in the context of sparse tensor product spaces. In \cite{BHW23,HvR24b}, 
it was shown that the target function \(u\) must admit hybrid 
Besov regularity to achieve a certain convergence rate, 
weakening the requirements for an approximation order \(N^{-s}\) 
when compared to the isotropic setting, 
wherein isotropic Besov regularity of a higher order is required.

In \cite{SS08}, optimal compressibility and computability has 
been verified for anisotropic wavelets for a partial differential 
operator on a product domain. 
For a singular boundary integral operator, on the other hand, 
this question has not been answered yet,
although this case covers the so-called boundary element method 
\cite{SS11,Ste08} and is therefore highly relevant in practice.

Recently, there was some progress in this direction.
If one discretises \eqref{eq:operator_equation}
using anisotropic tensor-product wavelets,
the resulting Galerkin matrix can be optimally compressed,
meaning that there are only \(\Ocal(N)\) nontrivial entries left
in the compressed matrix.
Meanwhile, the convergence rate with respect to the uncompressed scheme is
left unchanged.
This was shown by the authors in \cite{HvR24}.
In contrast, showing quasi-optimal complexity for an \emph{adaptive
algorithm} using anisotropic wavelets is the topic of the present article.
Therefore, we shall  investigate the compressibility of the operator matrix 
\(\Lbfm\) in anisotropic wavelet coordinates. 
It will turn out that \(\Lbfm\) is indeed \(s^{\star}\)-compressible for some
\(s^{\star} > \bar{s}\), 
where we note that \(\bar{s}\) is the best rate that can be achieved by
sparse-grid approximation for a general, smooth function \cite{BG04,GH11,GK00}.
This means in particular that an adaptive algorithm in the sense of 
\cite{CDD01,CDD02} can be realised 
which provides an approximation \(u_N\) which converges to \(u\) 
at the rate \(N^{-s}\) for any \(s < \bar{s}\).
Moreover, there are only \(\Ocal(N)\) operations required
for the algorithmic computation of \(u_N\), 
provided the matrix entries are known in advance.
The interesting question whether the 
approximate \(u\) can also numerically be realised in linear 
computing time and, especially,
whether the matrix entries can be calculated sufficiently fast,
is however not addressed in this article.

The rest of this article is organised as follows. 
In Section~\ref{sec:Wavelet_Bases}, 
we introduce anisotropic wavelet bases on the unit square and 
define associated function spaces.
Then, in Section~\ref{sec:compressible_matrices}, 
we state the problem formulation and the basic estimates for the discrete 
counterpart of the boundary integral operator under consideration. 
The compression scheme is then defined and analysed 
in Section~\ref{sec:compression_general}.
The results are generalised to manifolds in Section~\ref{sec:manifold}.

To simplify notation, we will write \(A \lesssim B\) if \(B\) is bounded by a
constant multiple of \(A\), where the constant does not depend on any parameters
\(A\) or \(B\) might depend on.
Similarly, we write \(A \gtrsim B\) if and only if \(B \lesssim A\),
and we write \(A \sim B\) if both \(A \lesssim B\) and \(B \lesssim A\).

\section{Wavelet Bases}
\label{sec:Wavelet_Bases}

In this section, we define the wavelet bases under consideration
and state their most important properties. 
Throughout the article, 
we assume that the scaling functions and the wavelets involved 
are piecewise polynomial and
compactly supported with support size depending on their levels.
In particular, we assume that \(\Diam \Supp \psi \sim 2^{-j}\) holds
for a one-dimensional wavelet \(\psi\) on level \(j\). Such wavelets 
are, in general, biorthogonal. 
Related
bases were first constructed in \cite{CDF92} and, later, this 
construction was also carried out on the interval in \cite{DKU99}.

\subsection{Single-Scale Bases}

Consider a sequence of nested, finite-dimensional, and asymptotically dense 
function spaces
\begin{equation*}
	V_{j_0} \subset V_{j_0 +1} \subset \dots \subset V_{j-1} \subset V_j 
	\subset V_{j+1} \subset \dots \subset V,
\end{equation*}
which are used to discretise functions
on the unit interval \([0, 1]\).
We assume that the basis functions can be expressed as
shifts and dilations of a set of finitely many
compactly supported scaling functions \(\phi\). 
Moreover, for a suitable index set \(\Delta_j\), we assume that
\begin{equation*}
	\Phi_j \isdef \big\{ \phi_{\lambda} : \lambda \in \Delta_j \big\}
\end{equation*}
is a Riesz basis of \(V_j\) uniformly in \(j\),
i.e., there holds
\begin{equation*}
	\left\|\sum_{\lambda \in \Delta_{j}} c_{\lambda} \phi_{\lambda}
	\right\|_{V}
	\sim \sum_{\lambda \in \Delta_{j}} \norms[\big]{c_{\lambda}}^2
\end{equation*}
independent of \(j\).
Additionally, we assume that
\(\Diam\Supp \phi_{\lambda}\sim 2^{-j}\). 
Here, the index \(\lambda = (j,k)\) contains information 
on the scale \(j\) and the location \(k\). 
In the easiest case, one can think of \(\phi\) 
as the constant function \(1\), and of \(\phi_{\lambda}\) as a 
properly scaled, dyadic indicator function. If \(V = L^2([0,1])\), for example,
we denote \(\phi_{\lambda} = 2^{\nicefrac{j}{2}} \mathds{1}_{[2^{-j}k, 
\,2^{-j}(k+1)]}\).

We say that the spaces \(V_j\) have the approximation order \(d\) 
if they contain locally all polynomials up to the order \(d\). 
Moreover, \(V_j\) are said to have the regularity up to
\(\gamma\isdef \sup \{ s \in \Rbbb : V_j \subset H^s([0, 1])\}\).

\subsection{Multiscale Bases}

As the spaces \(V_j\) are nested, we may write
\begin{equation}
	\label{eq:complement_decomposition}
	V_j = V_{j-1} \oplus W_j
\end{equation}
for a suitable complement or difference space \(W_j\).
One can show that, if the scaling function \(\phi\) 
generates a shift-invariant space, cf.\ \cite{CDF92,DKU99,DS98},
there are finitely many mother wavelet functions such that
\begin{equation*}
	\Psi_j \isdef \big\{ \psi_{\lambda} : \lambda \in \nabla_j \big\}
\end{equation*}
is a basis set of \(W_j\). 
Also herein, all mother wavelets \(\psi\) are piecewise polynomial of the same
order as \(\phi\), \(\nabla_j\) is a suitable index set and 
\(\psi_{\lambda}\) is a properly scaled and 
translated copy of a mother wavelet \(\psi\). 

From \eqref{eq:complement_decomposition}, we recursively 
obtain the multiscale decomposition
\begin{equation*}
	V_j = V_{j_0} \oplus W_{j_0+1} \oplus \dots \oplus W_j,
\end{equation*}
provided that \(0 \leq j_0 < j\). 
As the function spaces \(V_j\) are asymptotically dense, 
meaning that \(V = \overline{\bigcup_{j \geq j_0}V_j}\),
we conclude that the span of
\begin{equation}
	\Psi \isdef \Phi_{j_0} \cup \bigcup_{j = j_0+1}^\infty \Psi_j
	\label{eq:multiscale_basis}
\end{equation}
is dense in \(V\). 

Moreover, if we follow the construction of \cite{CDF92,DKU99,DS98}, the set
\(\Psi\) forms a Riesz basis of \(V\), meaning that
\begin{equation*}
	\left\| \sum_{\lambda \in \nabla} c_{\lambda} \psi_{\lambda}
	\right\|_{V}^2
	\sim \sum_{\lambda \in \nabla} \big|c_{\lambda}\big|^2,
	\qquad
	\nabla \isdef \Delta_{j_0} \cup \bigcup_{j=j_0+1}^{\infty} \nabla_j.
\end{equation*}
Therefore, there exists a unique basis \(\tilde{\Psi} \isdef
\{\tilde{\psi}_{\lambda} : \lambda \in \nabla\}\) of the dual space \(V'\)
which is biorthogonal to \(\Psi\), that is
\begin{equation}
	\label{eq:biorthogonality}
	\big\langle \tilde{\psi}_{\lambda'}, \psi_{\lambda} \big\rangle =
	\delta_{\lambda,\lambda'},
	\qquad \lambda,\lambda' \in \nabla,
\end{equation}
where \(\groupa{\cdot, \cdot}\) denotes the duality pairing.
In particular, \(\tilde{\Psi}\) is also a Riesz basis of \(V'\).

Following the construction of \cite{CDF92,DKU99,DS98},
the spaces \(\tilde{V}_{j_0} \isdef \{\tilde{\psi}_{\lambda} : \lambda \in
\Delta_{j_0}\}\) and \(\tilde{W}_j \isdef \{\tilde{\psi}_{\lambda} : \lambda \in
\nabla_j\}\) for \(j \geq j_0 + 1\) form a multiresolution analysis
\begin{equation*}
	\tilde{V}_{j_0} \subset \tilde{V}_{j_0+1} \subset \dots \subset
	\tilde{V}_{j-1} \subset \tilde{V}_{j} \subset \tilde{V}_{j+1} \dots
	\subset V'
\end{equation*}
in \(V'\), where for \(j \geq j_0 + 1\)
\begin{equation*}
	\tilde{V}_j \isdef \tilde{V}_{j_0} \oplus \tilde{W}_{j_0+1} \oplus \dots
	\tilde{W}_{j}.
\end{equation*}
The spaces \(\tilde{V}_j\isdef\{\tilde\phi_{\lambda}: 
\lambda\in\Delta_j\}\) admit the regularity \(\tilde{\gamma} > 0\) 
and the approximation order \(\tilde{d}\). 
Especially, because of \eqref{eq:biorthogonality}, 
we can conclude that \(\langle p, \, \psi \rangle = 0\) 
for any polynomial \(p\) up to the order \(\tilde{d}\), 
meaning that the wavelet \(\psi\) admits \(\tilde{d}\) vanishing moments.

A consequence of the vanishing moments is 
the cancellation property,
which means that
\begin{equation}
	\label{eq:cancellation_property_univariate}
	\big| \langle \psi_{\lambda}, u \rangle \big|
	\lesssim 2^{-(\tilde{d} + \frac{1}{2})j} \norms{u}_{W^{\tilde{d},
	\infty}(\Supp \psi_{\lambda})}, 
	\qquad u \in W^{\tilde{d}, \infty}(\Supp \psi_{\lambda})
\end{equation}
for \(\lambda \in \nabla_{j}\),
cf.\ \cite{Dah97,Sch98}.

\subsection{Tensor Product Bases}

Up to now, we have considered function spaces on the unit interval.
When it comes to \(n > 1\) dimensions, there are basically two approaches.
The first one is to use isotropic wavelets, which on level \(j\) are given 
as tensor products of wavelet functions on a level \(j\) and scaling 
functions on roughly the same level.
Another approach is to use the tensor product of one-dimensional 
wavelets on all the different levels, which will be the interest of this article.
In the context of boundary integral equations, they were first considered 
in \cite{GOS99} as bases of sparse tensor product spaces. 

In order to keep the construction simple, we fix 
the number of dimensions to \(n = 2\), but we would like to emphasise 
that an extension to higher dimensions is straightforward. To this
end, let \(\square \isdef [0, 1]^2\). For a given multiindex \(\jbfm 
\isdef (\jx, \jy) \geq \jbfm_0+\onebfm\), we define the index set 
\(\nablabfm_{\jbfm} \isdef \nabla_{\jx} \times \nabla_{\jy}\), and for 
\(\lambdabfm = (\lambdax, \lambday) \in \nablabfm_{\jbfm}\), 
we define the wavelet function
\begin{align*}
	\psi_{\lambdabfm}(\xvec) 
	\isdef \big(\psi_{\lambdax}\otimes \psi_{\lambday} \big) (\xvec)
	= \psi_{\lambdax}(x) \psi_{\lambday}(y).
\end{align*}
All such wavelets on a level \(\jbfm\) span the corresponding 
complement space, i.e.,
\begin{equation*}
	\Psi_{\jbfm} \isdef \big\{ \psi_{\lambdabfm} : \lambdabfm \in\nablabfm_{\jbfm} \big\}
\end{equation*}
is a basis set of \(W_{\jx} \otimes W_{\jy}\). If we also re-define 
\(\psi_{\lambda} \isdef \phi_{\lambda}\) for \(\lambda \in\nabla_{j_0}\)
and extend the above definition to any multiindex \(\jbfm \geq \jbfm_0\), 
then we deduce out of \eqref{eq:multiscale_basis} that the span of
\begin{equation*}
	\Psi \isdef \{ \psi_{\lambdabfm} : \lambdabfm \in\nablabfm \} 
	= \bigcup_{\jbfm \geq \jbfm_0} \Psi_{\jbfm},
	\qquad 
	\nablabfm \isdef\bigcup_{\jbfm \geq \jbfm_0} \nablabfm_{\jbfm},
\end{equation*}
is dense in \(V \otimes V\) and forms a Riesz basis. For the sake of convenience, 
we shall denote in the following
\begin{equation*}
	|\lambdabfm|_1 \isdef |\jbfm|_1, \qquad
	|\lambdabfm|_\infty \isdef |\jbfm|_\infty.
\end{equation*}

By considering in complete analogy the tensor products of 
the dual wavelets \(\tilde{\psi}_{\lambda}\), we get a unique 
dual Riesz basis \(\tilde{\Psi} = \{\tilde{\psi}_{\lambdabfm} : \lambdabfm
\in\nablabfm\}\) of the dual space \(V' \times V'\), which satisfies
\begin{equation*}
	\big\langle \tilde{\psi}_{\lambdabfm'}, \, \psi_{\lambdabfm}\big\rangle
	= \delta_{\lambdabfm, \lambdabfm'},
\end{equation*}
and therefore \(\|\tilde{\psi}_{\lambdabfm}\|_{V'\times V'}\sim 1\).
In particular, if \(V = L^{2}([0, 1])\), \(\Psi\) and
\(\tilde{\Psi}\) constitute biorthogonal Riesz bases for \(L^{2}(\square)\).

\subsection{Function Spaces}

Assuming \(L^{2}(\square)\)-normalisation, a wavelet \(\psi_{\lambdabfm}\), 
with \(\lambdabfm \in\nablabfm_{\jbfm}\) can be written as
\begin{equation}
	\label{eq:scaling_factor_for_L2_wavelet}
	\psi_{\lambdabfm}(\xvec) = 2^{\frac{1}{2}|\jbfm|_1} 
	\psi(2^{\jx}x -\kx) \psi(2^{\jy}y -\ky), \quad \kbfm\in \Nbbb_0^2.
\end{equation}
Therefore, every function \(u \in L^2(\square)\) 
admits a unique expansion
\begin{equation*}
	u = \sum_{\lambdabfm \in\nablabfm}
	u_{\lambdabfm} \psi_{\lambdabfm},
	\qquad u_{\lambdabfm} = 
	\big\langle \tilde{\psi}_{\lambdabfm}, u \big\rangle.
\end{equation*}

We intend next to characterise certain function spaces. 
As already stated, the biorthogonality \eqref{eq:biorthogonality} 
implies that the integral of a wavelet
against any polynomial up to order \(\tilde{d}\) vanishes.
By considering a Taylor expansion one can conclude that, for any sufficiently
regular function \(u\), there hold the \emph{cancellation properties}
\begin{align}
	\label{eq:cancellation_property_elementary}
	\big| \langle u, \psi_{\lambdabfm} \rangle_{\square} \big| 
	&\lesssim 2^{-\frac{1}{2} |\lambdabfm|_1} 
	\min\left\{ 2^{-\tilde{d} |\lambdabfm|_1} |u|_{W^{2
		\tilde{d},\infty}(\Supp \psi_{\lambdabfm})},\ 
		2^{-\tilde{d}|\lambdabfm|_{\infty}}
	|u|_{W^{\tilde{d},\infty}(\Supp \psi_{\lambdabfm})} \right\}, \\
	\label{eq:cancellation_property_elementary_dual}
	\big| \langle \tilde{\psi}_{\lambdabfm}, u \rangle_{\square} \big| 
	&\lesssim 2^{-\frac{1}{2} |\lambdabfm|_1} 
	\min\left\{ 2^{-d |\lambdabfm|_1} |u|_{W^{2
		d,\infty}(\Supp \tilde{\psi}_{\lambdabfm})},\ 
		2^{-d|\lambdabfm|_{\infty}}
	|u|_{W^{d,\infty}(\Supp \tilde{\psi}_{\lambdabfm})} \right\},
\end{align}
cf.\ \cite{HvR24,Rei08}.
Therefore, if \(u\) is sufficiently regular, the coefficients of the wavelet
expansion of \(u\) decay exponentially in the level of the wavelet.

Furthermore, the decay behaviour of coefficients can be used to 
characterise the norm of the corresponding function 
with respect to certain function spaces. 
If we define \(H^s(\square)\) to be the Sobolev space of
regularity \(- \tilde{\gamma} < s < \gamma\) with the 
boundary conditions imposed by the wavelet basis \(\Psi\), and 
\(H^{-s}(\square) \isdef (H^{s}(\square))'\) as the Sobolev space of
regularity \(-s\) with the boundary conditions imposed by \(\tilde{\Psi}\),
there holds
\begin{equation}
	\label{eq:norm_equivalence}
	\begin{aligned}
		\| u \|_{H^s(\square)}^2 &\sim \sum_{\lambdabfm \in\nablabfm} 2^{2s |\lambdabfm|_\infty}
		\big|\langle\tilde{\psi}_{\lambdabfm},u\rangle\big|^2,\\
		\| u \|_{H^{-s}(\square)}^2 &\sim \sum_{\lambdabfm \in\nablabfm} 2^{-2s |\lambdabfm|_\infty}
		\big|\langle u,\psi_{\lambdabfm}\rangle\big|^2,
	\end{aligned}
	\qquad\qquad - \tilde{\gamma} < s < \gamma, \\
\end{equation}
cf.\ \cite{Dah97,GK00,GOS99,HvR24,HvR24b,Sch98,SUV21} and the references therein.
In particular, a function \(u\) is contained in an isotropic function space
if the coefficients of its anisotropic wavelet expansion decay sufficiently fast.
For details on the realisation of boundary conditions, we refer
to \cite{DS98,DS99,HS06}.

\begin{remark}
	The norm equivalence \eqref{eq:norm_equivalence} 
	implies that \(\Psi\) and \(\tilde{\Psi}\) form Riesz bases 
	for a whole range of Sobolev spaces on the unit square.
\end{remark}

The norm equivalence \eqref{eq:norm_equivalence} was established in 
\cite{GOS99} for a smaller range of parameters, 
and there was also shown that
the coefficients with respect to a tensor product basis characterise
the dominating mixed regularity of a function \(u\).
Shortly afterwards, in \cite{GK00}, \eqref{eq:norm_equivalence} 
was posed on the \(n\)-dimensional torus.
In the same article, function spaces of 
\emph{hybrid regularity} were introduced. 
Roughly speaking, these spaces contain all 
functions \(u\) which admit mixed derivatives in an isotropic Sobolev space.

\begin{definition}
	\label{def:Griebel_Knapek_space}
	We define \(\Hfrak^{q,\sbfm}(\square)\) for \(q \geq 0\) and \(\sbfm = (\sx,
	\sy)\) as the space 
	\begin{equation*}
		\Hfrak^{q,\sbfm}(\square) = H^{q+\sx}([0,1]) \otimes H^{\sy}([0,1])
		\cap H^{\sx}([0,1]) \otimes H^{q+\sy}([0,1]).
	\end{equation*}
	For \(q < 0\), we define the space \(\Hfrak^{q,\sbfm}(\square)\) via the duality 
	\(\Hfrak^{q,\sbfm}(\square) = \big(\Hfrak^{-q,-\sbfm}(\square)\big)'\).
	Moreover, we define \(\Hfrak^{q,s}(\square) \isdef
	\Hfrak^{q,(s,s)}(\square)\).
\end{definition}

As proposed in \cite{GK00}, also the spaces \(\Hfrak^{q,s}(\square)\) can be
characterised by the decay behaviour of wavelets coefficients.
By using the arguments of \cite[Theorem 3.3]{HvR24b}, we immediately see that the results
of \cite{GK00,HvR24b} can be extended to the spaces
\(\Hfrak^{q,\sbfm}(\square)\) as well.
\begin{theorem}
	\label{thm:Griebel_Knapek_norm_equivalence}
	There holds
	\begin{align}
		\label{eq:norm_equivalence_GK}
		\|u\|_{\Hfrak^{q,\sbfm}(\square)}^2 &\sim \sum_{\lambdabfm \in\nablabfm}
		2^{2q|\lambdabfm|_\infty + 2\sbfm \cdot \lambdabfm}
		\big|\langle\tilde{\psi}_{\lambdabfm},u\rangle\big|^2,
		\quad -\tilde{\gammabfm} < \sbfm, \qbfm+\sbfm < \gammabfm,\\
		\label{eq:norm_equivalence_GK_dual}
		\|u\|_{\Hfrak^{q,\sbfm}(\square)}^2 &\sim \sum_{\lambdabfm \in\nablabfm}
		2^{2q|\lambdabfm|_\infty + 2\sbfm \cdot \lambdabfm}
		\big|\langle u,\psi_{\lambdabfm}\rangle\big|^2,
		\quad -\gammabfm < \sbfm, \qbfm+\sbfm < \tilde{\gammabfm}.
	\end{align}
	Herein, for \(\lambdabfm = \big((\jx,\kx), (\jy,\ky)\big)\), we denote
	\(\sbfm \cdot \lambdabfm \isdef \sx \jx + \sy \jy\),
	and the inequalities are understood component-wise.
\end{theorem}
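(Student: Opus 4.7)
The plan is to bootstrap from the one-dimensional equivalence~\eqref{eq:norm_equivalence} via a tensor product argument, as carried out for isotropic exponents in \cite{GK00,HvR24b}, and then combine the two constituent factors through the intersection structure of $\Hfrak^{q,\sbfm}(\square)$. First I would tensorise~\eqref{eq:norm_equivalence} variable by variable to obtain the coefficient characterization of the pure tensor product spaces: for $a,b$ in the admissible one-dimensional range $(-\tilde{\gamma},\gamma)$,
\[
  \|u\|_{H^{a}([0,1])\otimes H^{b}([0,1])}^{2} \sim \sum_{\lambdabfm\in\nablabfm} 2^{2a\jx+2b\jy}\,\big|\langle\tilde\psi_{\lambdabfm}, u\rangle\big|^{2}.
\]
This step is essentially the substance supplied by \cite{GK00,HvR24b}; for $a=b$ or for non-negative integer exponents it reduces to Fubini, while the extension to general real exponents proceeds by interpolation and duality on the tensor product scale.

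Assuming first $q\geq 0$, I would use that $\Hfrak^{q,\sbfm}(\square)$ is defined as an intersection, so its squared norm is equivalent to the sum of the squared norms of its two factors $H^{q+\sx}\otimes H^{\sy}$ and $H^{\sx}\otimes H^{q+\sy}$. Applying the tensor product characterization to both factors and pulling out $2^{2\sbfm\cdot\lambdabfm}$ yields
\[
  \|u\|_{\Hfrak^{q,\sbfm}(\square)}^{2} \sim \sum_{\lambdabfm\in\nablabfm} 2^{2\sbfm\cdot\lambdabfm}\big(2^{2q\jx}+2^{2q\jy}\big)\,\big|\langle\tilde\psi_{\lambdabfm}, u\rangle\big|^{2}.
\]
The elementary estimate $2^{2q\jx}+2^{2q\jy}\sim 2^{2q\max(\jx,\jy)}=2^{2q|\lambdabfm|_{\infty}}$, valid for $q\geq 0$, then identifies the weight with the one appearing in~\eqref{eq:norm_equivalence_GK}. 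The conditions $-\tilde{\gammabfm}<\sbfm$ and $\qbfm+\sbfm<\gammabfm$ are precisely those needed to place both $(q+\sx,\sy)$ and $(\sx,q+\sy)$ in the admissible range of the tensor product equivalence.

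For $q<0$, I would invoke the duality definition $\Hfrak^{q,\sbfm}(\square)=(\Hfrak^{-q,-\sbfm}(\square))'$ together with the biorthogonality~\eqref{eq:biorthogonality}: expanding a test element $v\in\Hfrak^{-q,-\sbfm}(\square)$ in the basis $\Psi$ while pairing against $\langle\tilde\psi_{\lambdabfm},u\rangle$ turns the dual norm into a weighted $\ell^{2}$ pairing, and the already proved positive-$q$ characterization of the predual transfers via a Cauchy--Schwarz estimate and a matching lower bound, produced by a suitable test sequence, to~\eqref{eq:norm_equivalence_GK} with the correct signs on $q$ and $\sbfm$. The dual statement~\eqref{eq:norm_equivalence_GK_dual} is then obtained by running the entire argument with the roles of $\Psi$ and $\tilde\Psi$ swapped, which is permissible because the construction of \cite{CDF92,DKU99} is symmetric modulo interchanging $\gamma$ and $\tilde{\gamma}$.

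The substantive obstacle is not the elementary exponent identity but the clean passage from the one-dimensional equivalence to a tensor product characterization valid for the full range of real -- in particular negative -- exponents, which demands interpolation and duality arguments that preserve the tensor product structure. Once that tensor product equivalence from \cite{GK00,HvR24b} is invoked, the remainder of the proof of Theorem~\ref{thm:Griebel_Knapek_norm_equivalence} is essentially the bookkeeping described above.
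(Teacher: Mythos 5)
The paper does not actually supply a proof of Theorem~\ref{thm:Griebel_Knapek_norm_equivalence}: it cites the isotropic version from \cite{GK00} and the companion paper \cite{HvR24b}, asserting that "by using the arguments of \cite{HvR24b}, we immediately see that the results \dots can be extended" to anisotropic weights \(\sbfm\). Your proposal reconstructs precisely the argument that this remark alludes to, and the overall strategy is correct: characterise the two tensor-product factors of Definition~\ref{def:Griebel_Knapek_space} in wavelet coordinates, sum the resulting weighted \(\ell^2\)-norms for the intersection, absorb the two weights via \(2^{2q\jx}+2^{2q\jy}\sim 2^{2q|\jbfm|_\infty}\) when \(q\ge0\), and pass to \(q<0\) by duality. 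So in substance you and the paper rely on the same route.

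Two refinements are worth noting. First, for the building block
\[
\|u\|_{H^{a}([0,1])\otimes H^{b}([0,1])}^{2}\sim \sum_{\lambdabfm\in\nablabfm}2^{2a\jx+2b\jy}\big|\langle\tilde\psi_{\lambdabfm},u\rangle\big|^{2},
\]
you invoke interpolation and duality on the tensor-product scale; this is heavier machinery than is needed. Since the one-dimensional equivalence \eqref{eq:norm_equivalence} says precisely that the rescaled family \(\{2^{-aj}\psi_{\lambda}\}\) is a Riesz basis of \(H^{a}([0,1])\) for \(-\tilde\gamma<a<\gamma\), and since the Hilbert tensor product of Riesz bases is a Riesz basis of the tensor-product Hilbert space, the displayed two-dimensional equivalence follows at once for all real \(a,b\) in the admissible one-dimensional range. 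No interpolation step is required, and this is also what keeps the parameter range in the theorem tight.

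Second, the duality step in your proposal crosses between the two displayed relations, a point you should make explicit rather than blur. If \(\{2^{q|\jbfm|_\infty+\sbfm\cdot\jbfm}\psi_{\lambdabfm}\}\) is a Riesz basis of \(\Hfrak^{-q,-\sbfm}(\square)\) (which is exactly what \eqref{eq:norm_equivalence_GK} says for the exponent \(-q\ge0\)), then the corresponding characterisation of the dual \(\Hfrak^{q,\sbfm}(\square)=(\Hfrak^{-q,-\sbfm}(\square))'\) reads off the coefficients \(\langle u,\psi_{\lambdabfm}\rangle\), producing \eqref{eq:norm_equivalence_GK_dual}, not \eqref{eq:norm_equivalence_GK}, for \(q<0\). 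So the clean way to organise the argument is to prove \emph{both} \eqref{eq:norm_equivalence_GK} and \eqref{eq:norm_equivalence_GK_dual} first for \(q\ge0\) (the second by interchanging the roles of \(\Psi\) and \(\tilde\Psi\), which also swaps \(\gamma\leftrightarrow\tilde\gamma\) and explains the different parameter constraints), and then dualise each to obtain the other for \(q<0\). With that reordering your sketch is a complete and correct proof; the remaining steps really are the bookkeeping you describe.
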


\begin{remark}
	The one-sided upper bound in \eqref{eq:norm_equivalence_GK} 
	can be extended
	for \(-\tilde{\dbfm} < \sbfm, \qbfm+\sbfm < \gammabfm\), whereas the one-sided lower bound can
	be extended to \(-\tilde{\gammabfm} < \sbfm, \qbfm+\sbfm < \dbfm\). Similarly, we can also
	extend the upper and lower bounds in \eqref{eq:norm_equivalence_GK_dual} up
	to \(-\dbfm\) and \(\tilde{\dbfm}\), respectively. See e.g.~\cite{Sch98} for the details.
\end{remark}

We note that the spaces \(\Hfrak^{q,\sbfm}(\square)\) are 
Hilbert spaces with the inner product
\begin{equation*}
	\langle u,v\rangle_{\Hfrak^{q,\sbfm}(\square)} \isdef \sum_{\lambdabfm \in\nablabfm}
	2^{2q|\lambdabfm|_\infty + 2\sbfm \cdot \lambdabfm}
	\langle\tilde{\psi}_{\lambdabfm}, u\rangle \langle\tilde{\psi}_{\lambdabfm}, v\rangle,
\end{equation*}
and that we have the Gelfand triples \(\Hfrak^{q,\sbfm}(\square)\hookrightarrow H^q(\square)
\hookrightarrow \Hfrak^{q,-\sbfm}(\square)\), provided that \(\sbfm > \zerobfm\).
Due to the tensor product structure of \(\psi_{\lambdabfm}\),
we also have \(\psi_{\lambdabfm} \in \Hfrak^{0,s}(\square)\) 
for any \(s < \gamma\) and it is known that \(\Hfrak^{0,\gamma}(\square)\) 
is continuously embedded into \(\Hfrak^{q,s}(\square)\) if either
\(q\geq0\) and \(s<\gamma-q\),
or if \(q<0\) and \(s \leq \gamma\), cf.\
\cite[Lemma 2.8]{BHW23}.

\section{Problem Formulation}
\label{sec:compressible_matrices}

We are now in the position to introduce the problem under consideration. 
In what follows, we intend to adaptively solve an equation of the kind
\begin{equation}
	\label{eq:BOE}
	\Lcal u = g \in H^{-q}(\square),
\end{equation}
where \(\Lcal\) is a  bounded and
continuously invertible (possibly with appropriate restrictions on the energy space)
integral operator 
with kernel \(\kappa\colon \square \times \square \to \Rbbb\).
In particular, this means that there holds 
the representation
\begin{equation*}
	\big(\Lcal u\big) (\xvec) = \int_{\square} \kappa(\xvec, \xvec')
	u(\xvec') \dint \xvec', \qquad \xvec \in \square,
\end{equation*}
provided that \(u\) is sufficiently smooth.
This kernel is assumed to be asymptotically smooth of the 
order \(2q \in \Rbbb\), 
meaning that \(\kappa\) is smooth apart from the diagonal \(\{(\xvec,\xvec') \in
\square \times \square : \xvec = \xvec'\}\),
and its derivatives are asymptotically bounded by
\begin{equation}
	\label{eq:asymptotic_smooth_decay}
	\big| \partial_{\xvec}^{\alphabfm} \partial_{\xvec'}^{\alphabfm'} 
	\kappa(\xvec,\xvec') \big|
	\lesssim \|\xvec - \xvec'\|^{-(2 + 2q + |\alphabfm| + |\alphabfm'|)}
\end{equation}
whenever \(2 + 2q + |\alphabfm| + |\alphabfm'| > 0\).
Such kernel functions arise, for instance, by applying an 
integral formulation to Dirichlet or Neumann screen problems 
in \(\mathbb{R}^3\). The extension of \eqref{eq:BOE} to general 
manifolds is considered later in Section~\ref{sec:manifold}.

\subsection{Approximation Spaces}

The ultimate goal is the solution of the integral equation
\eqref{eq:BOE} with asymptotically optimal complexity. 
This means in particular that the approximate solution 
converges for a fixed, maximal number of arithmetic operations
\(N\) to the original solution at the convergence rate \(N^{-s}\), 
obtainable if full knowledge on the function \(u\) was provided.

This question falls under the regime of 
\emph{nonlinear approximation} \cite{DeV98}. 
The first requirement is therefore to know for which functions \(u\), 
the error behaviour of the best \(N\)-term approximation,
\begin{equation*}
	E_N(u) \isdef \inf\left\{ \|u - v_N\|_{H^q(\square)} 
		: v_N = \sum_{\lambdabfm \in \Lambda}
	c_{\lambdabfm} \psi_{\lambdabfm}, \ |\Lambda| \leq N\right\},
\end{equation*}
decays at a given rate \(N^{-s}\).
Of course, this rate depends on the regularity of \(u\).
As proven in \cite{HvR24b,Nit04,Nit06}, 
\(u\) must admit a sufficiently high, hybrid Besov regularity.  
In particular, there is a guarantee that all functions
\(u \in \Bfrak^{q,s,\tau}_{\tau}(\square)\), with \(\frac{1}{\tau} = s +
\frac{1}{2}\) can be approximated with \(N\) terms at the rate \(N^{-s}\).
In other terms, whenever the coefficients of the wavelet expansion of
\(u\) decay sufficiently fast to guarantee that
\begin{equation}
	\label{eq:Besov_tau_criterion}
	|u|_{\Bfrak^{q,s,\tau}_{\tau}(\square)} \isdef
	\left[
		\sum_{\lambdabfm \in \nablabfm} 2^{\tau q |\lambdabfm|_{\infty}}
		\big|\big\langle \tilde{\psi}_{\lambdabfm}, u\rangle\big|^{\tau} 
	\right]^{\nicefrac{1}{\tau}}
	< \infty,
\end{equation}
then \(u\) is approximable at the rate \(N^{-s}\).

\subsection{\(s^{\star}\)-Compressibility}

For the sake of simplicity, let us rescale our wavelet functions.
As we consider a linear operator
\(\Lcal\colon H^q(\square) \to H^{-q}(\square)\),
it is meaningful to directly consider a Riesz basis for \(H^q(\square)\),
as it was done, for example, in \cite{CDD01,CDD02,DHS07,GS06,Ste04}.
Therefore, for the remainder of this article,
we assume that the wavelets \(\psi_{\lambdabfm}\) are scaled
such that they form a Riesz basis of \(H^q(\square)\).
In other terms, we denote
\begin{equation}
	\label{eq:rescaling}
	\psi_{\lambdabfm} \isdef 2^{-q|\lambdabfm|_\infty} \psi_{\lambdax}\otimes \psi_{\lambday}, \quad
	\tilde{\psi}_{\lambdabfm} \isdef 2^{q|\lambdabfm|_\infty} \tilde{\psi}_{\lambdax}
	\otimes \tilde{\psi}_{\lambday},
\end{equation}
where the one-dimensional wavelets \(\psi_{\lambdax}\), \(\psi_{\lambday}\) and 
their duals are normalised in \(L^2([0,1])\).
If the wavelets \(\psi_{\lambdabfm}\) admit suitable regularity to discretise
\(H^{q}(\square)\),
the norm equivalence \eqref{eq:norm_equivalence} immediately tells us that
the sets
\begin{equation*}
	\Psi \isdef \{ \psi_{\lambdabfm} : \lambdabfm \in \nablabfm\}, \quad
	\tilde{\Psi}\isdef \{ \tilde{\psi}_{\lambdabfm} : \lambdabfm \in \nablabfm\}
\end{equation*}
are Riesz bases of \(H^q(\square)\) and \(H^{-q}(\square)\), 
respectively, and that they are biorthogonal dual bases provided 
that \(-\tilde{\gamma} < q < \gamma\).

Let us now discretise the operator equation \eqref{eq:BOE}
with the help of the basis \(\Psi\) and its dual \(\tilde{\Psi}\). As 
\begin{equation*}
	g = \sum_{\lambdabfm \in \nablabfm} g_{\lambdabfm} \tilde{\psi}_{\lambdabfm},
	\quad g_{\lambdabfm} = \langle g, \psi_{\lambdabfm} \rangle_\square,
\end{equation*}
we may also expand \(\Lcal u\) with respect to \(\tilde{\Psi}\),
and use the biorthogonality to conclude that all the respective
coefficients must be equal. Therefore, any solution of \eqref{eq:BOE} 
is also a solution of the variational formulation
\begin{equation}
	\label{eq:variational_formulation}
	\text{find } u \in H^q(\square) \text{ such that } 
	\langle \Lcal u, \, 
	\psi_{\lambdabfm} \rangle_{\square}
	= \langle g, \, \psi_{\lambdabfm} \rangle_{\square}
	\text{ for any } \lambdabfm \in \nablabfm.
\end{equation}

As \(\Psi\) is a Riesz basis for \(H^q(\square)\)
and we are looking for \(u \in H^q(\square)\),
we might as well look for the coefficient vector
of the unknown function \(u\) with respect to the Riesz basis \(\Psi\), 
which we denote by
\(\ubfm = [u_{\lambdabfm}]_{\lambdabfm} \in \ell^2(\nablabfm)\).
The variational formulation \eqref{eq:variational_formulation} 
can therefore be written as a bi-infinite linear system 
of equations \(\Lbfm \ubfm = \gbfm\), where
\begin{align*}
	\Lbfm = \big[ 
		\langle \Lcal \psi_{\lambdabfm'}, \psi_{\lambdabfm} \rangle_{\square}
	\big]_{\lambdabfm, \lambdabfm' \in \nablabfm},
	\quad \gbfm = \big[ g_{\lambdabfm}	\big]_{\lambdabfm \in \nablabfm} \in \ell^2(\nablabfm).
\end{align*}
Hence, the solution of the operator equation is 
equivalent to a discrete problem in \(\ell^2(\nablabfm)\),
where the discrete operator \(\Lbfm\) is well-conditioned 
since \(\Psi\) is a Riesz basis and the operator \(\Lcal\) is
continuous and elliptic.
Therefore, for \(\vbfm \in \ell^2(\nablabfm)\) and \(v = 
\sum_{\lambdabfm \in\nablabfm} v_{\lambdabfm} \psi_{\lambdabfm}\),
there holds
\begin{equation*}
	\|\gbfm - \Lbfm \vbfm\|_{\ell^2(\nablabfm)} \sim \|g - \Lcal v
	\|_{H^{-q}(\square)}
	\sim \| u - v\|_{H^{q}(\square)} \sim \|\ubfm - \vbfm\|_{\ell^2(\nablabfm)},
\end{equation*}
that is, the error between the solution \(u\) and a function \(v\) is 
equivalent to the residual of the discrete, bi-infinite linear system 
of equations.

Our next question is:
Under which conditions is it possible to
approximate the unknown coefficient vector \(\ubfm\)
up to an error \(N^{-s}\), requiring at most
a bounded multiple of \(N\) operations?
This problem was addressed first in \cite{CDD01,CDD02} and 
the key ingredient is that the system matrix \(\Lbfm\) admits a high 
degree of compressibility, namely \emph{\(s^\star\)-compressibility}.

\begin{definition}
	\label{def:s_star_compressibility}
	We say that a bi-infinite matrix \(\Lbfm\) is \(s^\star\)-compressible, 
	if there exist two non-negative, summable sequences \( (\alpha_r)_r\) 
	and \( (\beta_r)_r\) such that for any \(r \in \Nbbb\), 
	there exists another bi-infinite matrix \(\Lbfm_{r}\) satisfying 
	\(\|\Lbfm - \Lbfm_{r}\|_{\ell^2(\nablabfm)} \lesssim \beta_{r} 2^{-sr} \) 
	for any \(s < s^\star\), while each row and each column 
	contains at most a bounded multiple of \(\alpha_r 2^r\) entries.
\end{definition}

\subsection{Basic definitions}

For the sake of convenience, let us denote
\begin{equation*}
	\mx(\jbfm,\jbfm') \isdef \min\{\jx,\jx'\},\quad
	\my(\jbfm,\jbfm') \isdef \min\{\jy,\jy'\},\quad
	\minm(\jbfm,\jbfm') \isdef \min\{\jx,\jy,\jx',\jy'\},
\end{equation*}
and similarly
\begin{equation*}
	\Mx(\jbfm,\jbfm') \isdef \max\{\jx,\jx'\},\quad
	\My(\jbfm,\jbfm') \isdef \max\{\jy,\jy'\},\quad
	\maxm(\jbfm,\jbfm') \isdef \max\{\jx,\jy,\jx',\jy'\}.
\end{equation*}
Additionally, we write \(\minm(\lambdabfm,\lambdabfm')\) and
similarly for \(\mx\) and \(\my\) to denote the minimal 
levels of a pair of multiindices and likewise for the maximum. 
Moreover, the support of the wavelet \(\psi_{\lambdabfm}\) is denoted by
\(\Omega_{\lambdabfm} \isdef \Supp \psi_{\lambdabfm}\),
and for \(\lambdabfm,\lambdabfm'\in\nablabfm\),
we define the distance between between the supports of the associated wavelets
\(\psi_{\lambdabfm}\) and \(\psi_{\lambdabfm'}\) by
\begin{equation*}
	\delta(\lambdabfm,\lambdabfm')\isdef
	\Dist(\Omega_{\lambdabfm},\Omega_{\lambdabfm'}).
\end{equation*}
In analogy, we denote \(\Omega_{\lambdax}
\isdef \Supp\psi_{\lambdax}\),
\begin{equation*}
	\deltax(\lambdabfm,\lambdabfm')\isdef
	\Dist(\Omega_{\lambdax}, \Omega_{\lambdax'}),
\end{equation*}
and likewise in the coordinate direction \({\rm y}\).

We shall also define the points where a wavelet is not smooth as 
\(\Omega_{\lambdabfm}^{\sigma} \isdef\Singsupp \psi_{\lambdabfm}\), i.e., 
a wavelet \(\psi_{\lambdabfm}\) is smooth
on \(\square \setminus \Omega_{\lambdabfm}^{\sigma}\). 
With this, we define for \(\lambdabfm \in \nabla_{\jbfm}\) 
and \(\lambdabfm' \in \nablabfm_{\jbfm'}\),
\begin{equation*}
	\sigmax(\lambdabfm,\lambdabfm')\isdef
	\begin{cases}
		\Dist(\Omega_{\lambdax},\Omega_{\lambdax'}^{\sigma}),
		&\text{if } \jx \geq \jx',\\
		\Dist(\Omega_{\lambdax}^{\sigma},\Omega_{\lambdax'}),
		&\text{if } \jx' \geq \jx.
	\end{cases}
\end{equation*}
Likewise, we define \(\sigmay(\lambdabfm,\lambdabfm')\).
Note that, in both coordinate directions, the quantities 
\(\sigmax(\lambdabfm,\lambdabfm')\) and
\(\sigmay(\lambdabfm,\lambdabfm')\) always exist.

\subsection{Wavelet Estimates}

First, we remark that we can immediately deduce 
the two estimates
\begin{align}
	\label{eq:cancellation_property}
	\big|\langle\Lcal\psi_{\lambdabfm'},\psi_{\lambdabfm}\rangle\big|
	&\lesssim 2^{-(\tilde{d}+\frac{1}{2})(|\lambdabfm|_1+|\lambdabfm'|_1)}
	2^{-q(|\lambdabfm|_\infty + |\lambdabfm'|_\infty)}
	\delta(\lambdabfm,\lambdabfm')^{-(2+2q+4\tilde{d})},\\
	\label{eq:cancellation_property_two_vanmom}
	\big|\langle\Lcal\psi_{\lambdabfm'},\psi_{\lambdabfm}\rangle\big|
	&\lesssim 2^{-\frac{1}{2}(|\lambdabfm|_1+|\lambdabfm'|_1)}
	2^{-\tilde{d}(j^{(1)}+j^{(2)})}
	2^{-q(|\lambdabfm|_\infty+|\lambdabfm'|_\infty)}
	\delta(\lambdabfm,\lambdabfm')^{-(2+2q+2\tilde{d})}.
\end{align}
Indeed, by adopting the scaling \eqref{eq:rescaling},
the estimates
\eqref{eq:cancellation_property} and \eqref{eq:cancellation_property_two_vanmom}
follow immediately from \cite[Theorem 3]{HvR24} and \cite[Theorem 2.1.9]{Rei08},
respectively.
In \eqref{eq:cancellation_property_two_vanmom},
\(j^{(1)}\) and \(j^{(2)}\) may denote any pair of the four levels 
\(\{\jx,\jy,\jx',\jy'\}\) involved, 
where a level can be chosen as many times as it occurs in this set.
Obviously, the best choice is taking the two largest entries.

The estimates \eqref{eq:cancellation_property} and
\eqref{eq:cancellation_property_two_vanmom}, however, do 
only hold if \(\delta(\lambdabfm,\lambdabfm')>0\), which is quite a 
strong requirement due to the anisotropic structure of our wavelets.
Therefore, we also need estimates on wavelet entries for which the
supports of the wavelets involved overlap. 
As we will see, in order to discard a matrix entry, 
we  have to require that one wavelet is located 
sufficiently far away from the singular support of the other wavelet 
in at least one coordinate direction.

First, we want to derive an estimate for a wavelet
pair, for which one wavelet is located at the, possibly 
very long, face of another wavelet, cf.\ Figure \ref{fig:support_picture}.

\begin{figure}[h]
	\centering
	\scalebox{0.5}{\begin{tikzpicture}
	\begin{LARGE}
	\draw[fill=cmblue!20!white, very thick] (-8,0) -- (8,0) -- (8,2) -- (-8,2) -- cycle;
	\draw[fill=cmred!20!white, very thick] (3,4) -- (4,4) -- (4,8) -- (3,8) -- cycle;
	\draw[<->] (3.5, 2) -- (3.5, 4);

	\node at (4, -0.5) {\(2^{-\jx}\)};
	\node at (8.7, 1) {\(2^{-\jy}\)};
	
	\node at (3.7, 8.5) {\(2^{-\jx'}\)};
	\node at (4.7, 6) {\(2^{-\jy'}\)};

	\node at (4.7, 3.0) {\(\deltay(\lambdabfm,\lambdabfm')\)};

\end{LARGE}

\end{tikzpicture}}
	\caption{Illustration of the situation in Lemma \ref{lm:long_face_estimate}.}
	\label{fig:support_picture}
\end{figure}
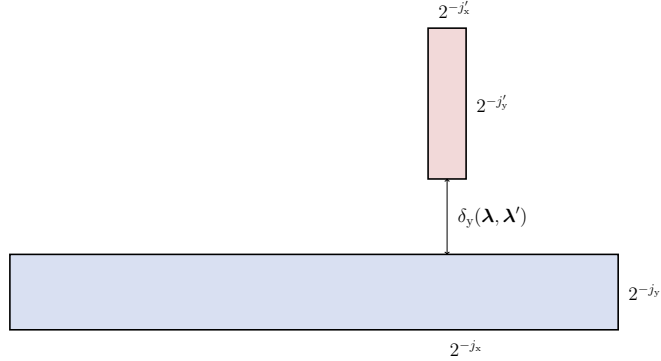

\begin{lemma}
	\label{lm:long_face_estimate}
	Assume that \(\lambdabfm \in\nablabfm_{\jbfm}\), 
	\(\lambdabfm' \in\nablabfm_{\jbfm'}\) are such that 
	\(\mx(\jbfm,\jbfm') \leq\my(\jbfm,\jbfm')\), and consider a 
	wavelet pair which satisfies \(\deltay(\lambdabfm,\lambdabfm')>0\).
	Then, we have
	\begin{align*}
		\big|\langle\Lcal\psi_{\lambdabfm'}, \psi_{\lambdabfm}\rangle\big|
		\lesssim 2^{-\frac{1}{2}(\jy+\jy' +|\jx-\jx'|)}
		2^{-(\tilde{d}+q)(|\jbfm|_{\infty} + |\jbfm'|_{\infty})} 
		\deltay(\lambdabfm,\lambdabfm')^{-(1+2q+2\tilde{d})}.
	\end{align*}
\end{lemma}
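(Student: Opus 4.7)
The plan rests on two facts. First, \(\deltay>0\) ensures that \(\kappa(\xvec,\xvec')\) is smooth in all four coordinates on the integration domain, since \(\|\xvec-\xvec'\|\geq\sqrt{(x-x')^{2}+\deltay^{2}}>0\); hence \eqref{eq:asymptotic_smooth_decay} controls arbitrary mixed derivatives of the kernel. Second, the hypothesis \(\mx\leq\my\) forbids the simultaneous occurrence of \(\jx>\jy\) and \(\jx'>\jy'\), since that would force \(\min(\jx,\jx')>\min(\jy,\jy')\); in particular, at most one wavelet in the pair is ``vertical''. For each wavelet separately, I would apply a one-dimensional \(\tilde{d}\)-order Taylor expansion in the coordinate that realises \(|\jbfm|_{\infty}\) — in \(y\) if the wavelet is horizontal (\(\jy\geq\jx\)) and in \(x\) if it is vertical (\(\jx>\jy\)) — and integrate the remaining two coordinates directly.

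After extracting the rescaling factor \(2^{-q(|\jbfm|_{\infty}+|\jbfm'|_{\infty})}\) dictated by \eqref{eq:rescaling}, the vanishing moments of the two cancelled wavelets annihilate the polynomial part of the joint Taylor expansion, with the mixed cross-term killed by the vanishing moments of each cancelled wavelet acting against the polynomial factor in the other variable — the standard tensor-product argument. The Taylor remainder is controlled by \eqref{eq:asymptotic_smooth_decay}, so that collecting the \(L^{1}\)-norms of the cancelled wavelets produces the factor \(2^{-(\tilde{d}+\frac{1}{2})(|\jbfm|_{\infty}+|\jbfm'|_{\infty})}\) together with the pointwise kernel bound \(\|\xvec-\xvec'\|^{-(2+2q+2\tilde{d})}\leq ((x-x')^{2}+\deltay^{2})^{-(1+q+\tilde{d})}\) on the reduced kernel. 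Because \(\mx\leq\my\) guarantees that at least one residual coordinate is in the \(y\)-direction, integrating the two remaining wavelets reduces the critical part to the one-dimensional singular integral \(\int_{\Rbbb}(u^{2}+\deltay^{2})^{-(1+q+\tilde{d})}\,\dint u\sim\deltay^{-(1+2q+2\tilde{d})}\) via the substitution \(u=x-x'\). The transversal integration contributes the smaller support length \(2^{-\max(\jx,\jx')}\), which together with the \(L^{\infty}\)-bounds \(2^{j/2}\) on the residual wavelets and the identity \((\jx+\jx')/2-\max(\jx,\jx')=-|\jx-\jx'|/2\) yields the announced pre-factor.

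The main obstacle is the bookkeeping in the ``mixed'' configuration, when one wavelet is horizontal and the other vertical: the principal directions are then \(y\) and \(x'\), while the residual variables are \(x\) and \(y'\), so the reduced kernel has to be integrated against \(\psi_{\lambdax}\otimes\psi_{\lambday'}\) rather than the symmetric pair. In that case the pointwise kernel bound must be kept in the refined form \((d_{x}(x)^{2}+(\deltay+(y'-b))^{2})^{-(1+q+\tilde{d})}\), where \(d_{x}(x)=\Dist(x,\Omega_{\lambdax'})\) and \(b\) is the endpoint of \(\Omega_{\lambday'}\) closest to \(\Omega_{\lambday}\); the naive substitution of \(\deltay\) for the full effective separation \(\deltay+(y'-b)\) is too lossy in the regime \(\deltay<2^{-\jx'}\). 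Verifying that carrying the \(v=y'-b\) dependence through the \(y'\)-integration contributes only the expected \(L^{1}\)-factor \(2^{-\jy'/2}\) — and in particular does not spoil the \(\deltay^{-(1+2q+2\tilde{d})}\) rate — is the delicate calculation that seals the estimate in all configurations permitted by \(\mx\leq\my\).
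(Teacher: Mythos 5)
Your cancellation structure — Taylor-expand each wavelet in its own $|\jbfm|_{\infty}$-direction, keep the remaining two one-dimensional factors, and let the $x$-integral produce $\deltay^{-(1+2q+2\tilde{d})}$ — is exactly the paper's strategy. Your symmetric-case bookkeeping (two residual $x$-factors, substitution $u=x-x'$, transversal length $2^{-\max(\jx,\jx')}$) reproduces the paper's computation. Your preliminary observation that $\mx\leq\my$ excludes two vertical wavelets is also correct and used implicitly in the paper.

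However, the last paragraph contains a misconception that leaves the mixed case unresolved, and this is precisely where the lemma still needs to be verified. You claim that replacing $|y-y'|$ by $\deltay$ (rather than by the refined separation $\deltay+(y'-b)$) is ``too lossy'', and you do not actually carry out the refined calculation you say is needed. In fact the naive bound already suffices and no refinement is necessary. In the mixed configuration, after cancelling in $y$ and $x'$ one is left with
\begin{equation*}
2^{(\jx+\jy')/2}\int_{\Omega_{\lambdax}}\int_{\Omega_{\lambday'}}
\big((x-\bar{x}')^{2}+|\bar{y}-y'|^{2}\big)^{-(1+q+\tilde{d})}\dint x\dint y'.
\end{equation*}
Bounding $|\bar{y}-y'|\geq\deltay$ uniformly, the $y'$-integral trivially contributes the support length $2^{-\jy'}$, and the $x$-integral (extended to all of $\Rbbb$) gives $\deltay^{-(1+2q+2\tilde{d})}$, since $1+2q+2\tilde d>1$. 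Multiplying by $2^{(\jx+\jy')/2}2^{-\jy'}=2^{(\jx-\jy')/2}$ and combining with the cancellation factor $2^{-(\tilde d+\frac12)(\jy+\jx')}$ and the rescaling $2^{-q(|\jbfm|_\infty+|\jbfm'|_\infty)}$ yields precisely $2^{-\frac12(\jy+\jy'+|\jx-\jx'|)}2^{-(\tilde d+q)(|\jbfm|_\infty+|\jbfm'|_\infty)}\deltay^{-(1+2q+2\tilde d)}$, the claimed bound. This is effectively what the paper does: it takes $\esssup$ over $(y,x',y')$ (hence only the crude bound $|y-y'|\geq\deltay$) and integrates only in $x$, the $L^{1}$-norm $2^{-\jy'/2}$ being already subsumed in the two-dimensional cancellation estimate~\eqref{eq:cancellation_property_elementary} applied to $\psi_{\lambdabfm'}$. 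So the ``delicate calculation'' you flag as the crux is in fact the easy one; as written, your proof is not complete because you stop short of doing it, but the worry that motivates the stop is unfounded.
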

\begin{proof}
	Let us assume without loss of generality that \(\jx \leq\jx'\),
	which implies, together with \(\mx(\jbfm,\jbfm') \leq \my(\jbfm,\jbfm')\),
	that \(|\jbfm|_{\infty} = \jy\).
	Then, by using the one-dimensional 
	cancellation property \eqref{eq:cancellation_property_univariate}, 
	together with the appropriate scaling of the wavelets, 
	we use the vanishing moments to derive the decay estimate
	\begin{align}
		\notag
		\big|\langle\Lcal\psi_{\lambdabfm'}, \psi_{\lambdabfm}\rangle\big|
		&\lesssim 2^{-\frac{1}{2}(\jy + \jy' + \jx')
		-(\tilde{d} + q)(|\jbfm|_\infty+|\jbfm'|_\infty)}\\
		\label{eq:W_dtilde_1}
		&\qquad \bigg|\int_{\Omega_{\lambdax}}
		\kappa\big(x, \cdot, \cdot, \cdot \big)
		\psi_{\lambdax} (x) \dint x 
		\bigg|_{W^{2\tilde{d},\infty}(\Omega_{\lambday} \times
				\Omega_{\lambdax'}
		\times \Omega_{\lambday'})}.
	\end{align}
	To bound \eqref{eq:W_dtilde_1}, we remark that
	\(\|\psi_{\lambdax}\|_{L^{\infty}(\Omega_{\lambdax})} \lesssim
	2^{\nicefrac{\jx}{2}}\).
	Together with the decay estimate on the kernel
	\eqref{eq:asymptotic_smooth_decay}, we have
	\begin{align}
		\notag
		\big|\langle\Lcal\psi_{\lambdabfm'}, \psi_{\lambdabfm}\rangle\big|
		&\lesssim 2^{-\frac{1}{2}(\jy + \jy'
			+ |\jx-\jx'|)
		-(\tilde{d} + q)(|\jbfm|_\infty+|\jbfm'|_\infty)}\\ 
		\label{eq:lemma_long_face_to_plug_in}
		&\qquad \esssup_{(y, x', y')\in\Omega_{\lambday}
		\times\Omega_{\lambdax'} \times\Omega_{\lambday'}}
		\int_{\Omega_{\lambdax}}\|\xvec - \xvec'\|^{-(2+2q+2\tilde{d})} \dint x.
	\end{align}

	In order to estimate \eqref{eq:lemma_long_face_to_plug_in},
	let us fix 
	\((y, x', y')\in \Omega_{\lambday} \times \Omega_{\lambdax'} \times\Omega_{\lambday'}\). 
	We then decompose the integral over \(\Omega_{\lambdax}\) 
	into a part near \(x'\) and a part further away from \(x'\), i.e.,
	\begin{align*}
		\int_{\Omega_{\lambdax}} \|\xvec - \xvec'\|^{-(2+2q+2\tilde{d})} \dint x
		&\lesssim \int_{0}^{\deltay(\lambdabfm,\lambdabfm')}
		\big(t^2 + |y-y'|^2\big)^{-(1+q+\tilde{d})} \dint t\\
		&\qquad + \int_{\deltay(\lambdabfm,\lambdabfm')}^{\infty}
		\big(t^2 + |y-y'|^2\big)^{-(1+q+\tilde{d})} \dint t.
	\end{align*}
	For the first integral, we can immediately conclude that
	\begin{align*}
		\int_{0}^{\deltay(\lambdabfm,\lambdabfm')}
		\big(t^2 + |y-y'|^2\big)^{-(1+q+\tilde{d})} \dint t
		&\leq \deltay(\lambdabfm,\lambdabfm')|y-y'|^{-(2+2q+2\tilde{d})}\\
		&\leq \deltay(\lambdabfm,\lambdabfm')^{-(1+2q+2\tilde{d})}.
	\end{align*}
	For the second integral, on the other hand, we have
	\begin{align*}
		\int_{\deltay(\lambdabfm,\lambdabfm')}^{\infty}
		\big(t^2 + |y-y'|^2\big)^{-(1+q+\tilde{d})} \dint t
		&\lesssim \int_{\deltay(\lambdabfm,\lambdabfm')}^{\infty}
		t^{-(2+2q+2\tilde{d})} \dint t\\
		&\lesssim \deltay(\lambdabfm,\lambdabfm')^{-(1+2q+2\tilde{d})}.
	\end{align*}
	Hence, by plugging this into \eqref{eq:lemma_long_face_to_plug_in}, 
	we get the desired estimate.
\end{proof}

By an interchange of the coordinate directions, we immediately deduce the
following corollary.
\begin{corollary}
	If \(\lambdabfm \in\nablabfm_\jbfm, \lambdabfm' \in\nablabfm_{\jbfm'}\) and
	\(\my(\jbfm,\jbfm') \leq \mx(\jbfm,\jbfm')\), then for any pair of wavelets
	satisfying \(\deltax(\lambdabfm,\lambdabfm') > 0\), there holds
	\begin{equation*}
		\big|\big\langle \Lcal\psi_{\lambdabfm'},
		\psi_{\lambdabfm}\big\rangle\big|
		\lesssim 2^{-\frac{1}{2}(\jx + \jx' + |\jy-\jy'|)}
		2^{-(\tilde{d}+q) (|\jbfm|_{\infty} + |\jbfm'|_{\infty})}
		\deltax(\lambdabfm,\lambdabfm')^{-(1+2q+2\tilde{d})}.
	\end{equation*}
\end{corollary}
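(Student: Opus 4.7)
The plan is to reduce the corollary to Lemma \ref{lm:long_face_estimate} by a change of variables that swaps the two coordinate directions. Concretely, I would apply the reflection map $(x, y) \mapsto (y, x)$ to both integration variables. Under this involution, the tensor product wavelets transform as $\psi_{\lambdabfm}(x,y) = \psi_{\lambdax}(x)\psi_{\lambday}(y) \mapsto \psi_{\lambday}(x)\psi_{\lambdax}(y)$, i.e.\ the roles of the $\mathrm{x}$- and $\mathrm{y}$-components are interchanged. Crucially, the three ingredients used in the proof of Lemma \ref{lm:long_face_estimate} are all invariant under this swap: the one-dimensional cancellation property applied in a single coordinate direction, the $L^{\infty}$-bound $\|\psi_{\lambda}\|_{L^\infty} \lesssim 2^{j/2}$, and the isotropic kernel decay estimate \eqref{eq:asymptotic_smooth_decay}, which depends only on the Euclidean norm $\|\xvec - \xvec'\|$.

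After the swap, the hypothesis $\my(\jbfm,\jbfm') \leq \mx(\jbfm,\jbfm')$ becomes exactly the hypothesis $\mx \leq \my$ of the lemma applied to the reflected wavelets, and the requirement $\deltax(\lambdabfm,\lambdabfm') > 0$ becomes $\deltay > 0$. Invoking Lemma \ref{lm:long_face_estimate} on the reflected configuration then yields the claimed inequality, with every occurrence of $\jx, \jx'$ in that lemma's conclusion replaced by $\jy, \jy'$ and vice versa; in particular $2^{-\frac{1}{2}(\jy+\jy'+|\jx-\jx'|)}$ becomes $2^{-\frac{1}{2}(\jx+\jx'+|\jy-\jy'|)}$, $|\jbfm|_\infty$ is unchanged under the swap, and $\deltay$ is replaced by $\deltax$. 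Since the argument is purely a symmetry observation, there is no real obstacle—one only has to verify that none of the estimates in the proof of the lemma exploited an asymmetry between the coordinates, which is indeed the case.
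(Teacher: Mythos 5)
Your proposal is correct and matches the paper's own reasoning exactly: the paper deduces the corollary from Lemma~\ref{lm:long_face_estimate} with the single remark ``By an interchange of the coordinate directions,'' which is precisely the symmetry observation you spell out.
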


These estimates will be useful when the supports of the wavelets involved are,
at least in one coordinate direction, well-separated and located sufficiently
far away from each other.
If this is not the case, we need estimates of the following kind.

\begin{lemma}
	\label{lm:near_field_previous}
	Assume that \(0 < \sigmax(\lambdabfm,\lambdabfm') \leq
	2^{-\mx(\lambdabfm,\lambdabfm')}\).
	Then, there holds
	\begin{align*}
		\big|\big\langle \psi_{\lambdabfm'}, \Lcal \psi_{\lambdabfm}
		\big\rangle\big| 
		&\lesssim 2^{-\frac{1}{2}|\jbfm -\jbfm'|_1}
		2^{-\tilde{d}(\Mx(\jbfm,\jbfm') +\My(\jbfm,\jbfm'))} 
		2^{-q(|\jbfm|_{\infty} + |\jbfm'|_{\infty})}
		\sigmax^{-(2q + 2\tilde{d})}.
	\end{align*}
	Conversely, if \(0 < \sigmay(\lambdabfm,\lambdabfm') \leq
	2^{-\my(\lambdabfm,\lambdabfm')}\), there holds
	\begin{align*}
		\big|\big\langle \psi_{\lambdabfm'}, \Lcal \psi_{\lambdabfm}
		\big\rangle\big| 
		&\lesssim 2^{-\frac{1}{2}|\jbfm -\jbfm'|_1}
		2^{-\tilde{d}(\Mx(\jbfm,\jbfm') +\My(\jbfm,\jbfm'))} 
		2^{-q(|\jbfm|_{\infty} + |\jbfm'|_{\infty})}
		\sigmay^{-(2q + 2\tilde{d})}.
	\end{align*}
\end{lemma}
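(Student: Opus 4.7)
My plan is to combine two one-dimensional cancellations — one in each coordinate direction — in the spirit of the proof of Lemma~\ref{lm:long_face_estimate}, now exploiting the singular-support separation $\sigmax$ rather than a genuine support distance. I will treat only the first assertion, the second being symmetric under exchange of coordinates. By relabelling, I may reduce to $\jx\geq \jx'$ and $\jy\geq\jy'$, so that $\Mx=\jx$, $\My=\jy$, $\mx=\jx'$, $\my=\jy'$, and $\psi_{\lambdabfm}$ is the finer wavelet in both coordinate directions; in particular, $\sigmax = \Dist(\Omega_{\lambdax},\Omega_{\lambdax'}^{\sigma})$. The remaining orderings reduce to this case by using whichever of the four one-dimensional wavelets is finer in the corresponding direction.

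The cancellations I apply are in $x_1$ via $\psi_{\lambdax}$ and in $x_2$ via $\psi_{\lambday}$. For the $x_2$-cancellation, only the off-diagonal smoothness of $\kappa$ in $x_2$ is needed, which is routine. The $x_1$-cancellation is the delicate step, as it requires the integrand $\psi_{\lambdax'}(x_1')\kappa(\xvec,\xvec')$ to be sufficiently smooth in $x_1$ on $\Omega_{\lambdax}$. Here the hypothesis $\sigmax > 0$ is essential: the coarser wavelet $\psi_{\lambdax'}$ agrees with a single polynomial of degree less than $d$ on the $\sigmax$-neighbourhood of $\Omega_{\lambdax}$. I split $\Omega_{\lambdax'}$ into a near part (within $\sigmax/2$ of $\Omega_{\lambdax}$) and a far part (at distance at least $\sigmax/2$). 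On the far part, the $x$-coordinates of the two wavelet supports are separated by at least $\sigmax/2$, so the cancellation of $\psi_{\lambdax}$ goes through via the standard Taylor remainder. On the near part, where the supports may overlap, polynomial replacement of $\psi_{\lambdax'}$ combined with the vanishing moments of $\psi_{\lambdax}$ and a Taylor expansion of $\kappa$ about a point outside $\Omega_{\lambdax}$ again produce the cancellation. Together the two cancellations yield an overall scaling of $2^{-(\tilde{d}+\frac{1}{2})(\jx+\jy)}$ and $\tilde{d}$ derivatives of the kernel in each of $x_1, x_2$.

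Bounding the remaining wavelets by $\|\psi_{\lambdax'}\|_{\infty}\|\psi_{\lambday'}\|_{\infty}\lesssim 2^{(\jx'+\jy')/2}$ and invoking \eqref{eq:asymptotic_smooth_decay} to bound the $\tilde{d}$-fold mixed derivatives of $\kappa$ by $\|\xvec-\xvec'\|^{-(2+2q+2\tilde{d})}$, the estimate reduces to a two-dimensional integral of $\|\xvec-\xvec'\|^{-(2+2q+2\tilde{d})}$ over a region effectively separated from the diagonal by $\sigmax$ in the $x$-direction. A polar-type calculation as in Lemma~\ref{lm:long_face_estimate} gives $\sigmax^{-(2q+2\tilde{d})}$. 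Inserting the $H^q$-rescaling factor $2^{-q(|\jbfm|_\infty+|\jbfm'|_\infty)}$ from \eqref{eq:rescaling} and using
\[
-(\tilde{d}+\tfrac{1}{2})(\jx+\jy)+\tfrac{1}{2}(\jx'+\jy')=-\tilde{d}(\Mx+\My)-\tfrac{1}{2}|\jbfm-\jbfm'|_1,
\]
I recover the claimed bound.

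The main obstacle I anticipate is handling the potential overlap of $\Omega_{\lambdax}$ and $\Omega_{\lambdax'}$: the na\"ive Taylor remainder for the $x_1$-cancellation would diverge at the kernel's diagonal. The near/far splitting and polynomial-replacement device underpinned by $\sigmax>0$ is the technical core that converts the singular-support separation into an effective integration distance $\sigmax$ in the kernel bound.
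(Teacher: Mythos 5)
Your overall architecture is the same as the paper's, up to a relabelling of $\lambdabfm \leftrightarrow \lambdabfm'$: apply the one-dimensional cancellation of the \emph{finer} wavelet in each coordinate direction, then split the \emph{coarser} wavelet (equivalently, its support) into a piece well-separated from the finer wavelet and a piece where the supports may overlap. The far-part argument and the final exponent bookkeeping are correct.

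The gap is in the near part. You write that ``polynomial replacement of $\psi_{\lambdax'}$ combined with the vanishing moments of $\psi_{\lambdax}$ and a Taylor expansion of $\kappa$ about a point outside $\Omega_{\lambdax}$ again produce the cancellation.'' This cannot work as stated: in the near part, $x'$ ranges over a set that overlaps (or touches) $\Omega_{\lambdax}$, so the Taylor remainder of $\kappa(\cdot,x')$ in $x$ involves $\partial_x^{\tilde{d}}\kappa(\xi,x')$ at intermediate points $\xi$ that come arbitrarily close to (or coincide with) $x'$. Since $\kappa$ is singular on the diagonal, this remainder blows up; the estimate $|\kappa(\cdot,x')|_{W^{\tilde{d},\infty}(\Omega_{\lambdax})}$ is infinite for such $x'$. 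Choosing the expansion centre $x_0\notin\Omega_{\lambdax}$ does not help, because the remainder is controlled by the $\tilde d$-th derivative on the segment between $x_0$ and $x\in\Omega_{\lambdax}$, which still crosses the singularity. The paper's proof deals with precisely this obstruction by a genuinely different mechanism: it invokes the mapping property of a one-dimensional Calder\'on--Zygmund operator (following \cite{DHS06,Rei08}, relying on \cite{SW99}), bounding $\bigl|\partial_{x'}^{\tilde d}\!\int\kappaxt(x,x')\,\fxt(x)\,\dint x\bigr|$ by $|\fxt|_{W^{2q+2\tilde d,\infty}}\lesssim\|\fxt\|_{H^{\frac12+2q+2\tilde d}}\lesssim 2^{(\frac12+2q+2\tilde d)\jx}$, and only then converts the Sobolev-norm factor into $\sigmax^{-(2q+2\tilde d)}$ by using the \emph{hypothesis} $\sigmax\lesssim 2^{-\mx}$. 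Note in particular that for the near (overlapping) piece the factor $\sigmax^{-(2q+2\tilde d)}$ does not arise from integrating $\|\xvec-\xvec'\|^{-(2+2q+2\tilde d)}$ over a region bounded away from the diagonal, as your sketch assumes; the integration region is not bounded away, and the factor comes purely from the Sobolev scaling of the smooth extension together with the assumed upper bound on $\sigmax$. You would need to import this operator-regularity step to close the argument.

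A secondary point: your bound $\|\psi_{\lambdax'}\|_\infty\|\psi_{\lambday'}\|_\infty\lesssim 2^{(\jx'+\jy')/2}$ is what one uses for the separated (far) part. For the overlapping part the correct quantity is the higher-order Sobolev norm of the smooth polynomial extension, not the $L^\infty$ norm of the wavelet, and the reconciliation with the claimed exponent again hinges on $\sigmax\lesssim 2^{-\mx}$. Your final arithmetic happens to match because the two routes yield the same exponent after this substitution, but the intermediate justification is different and needs to be supplied.
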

\begin{proof}
	Let us assume without loss of generality that \(\jx' \geq \jx\).
	We exploit the vanishing moments of \(\psi_{\lambdax'}\) to obtain that
	\begin{align}
		\label{eq:estimate_with_mod_kernel}
		\big|\big\langle \psi_{\lambdabfm'}, \Lcal \psi_{\lambdabfm}
		\big\rangle\big| 
		\lesssim 2^{-(\tilde{d} + \frac{1}{2})\jx'} 2^{-q(|\jbfm|_{\infty} +
		|\jbfm'|_{\infty})}
		\esssup_{x' \in \Omega_{\lambdax'}} 
		\bigg|\partial_{x'}^{\tilde{d}}  \int_{\Omega_{\lambdax}} \kappaxt(x, x') 
		\psi_{\lambdax}(x) \dint x \bigg|,
	\end{align}
	where
	\begin{equation*}
		\kappaxt(x,x') \isdef \int_{\Omega_{\lambday}} \int_{\Omega_{\lambday'}}
		\psi_{\lambday}(y) \psi_{\lambday'}(y') \kappa(x, y, x', y')
		\dint y' \dint y.
	\end{equation*}

	In what follows, let us assume without loss of generality 
	that \(\jy' \geq \jy\).
	Then, for \(x \neq x'\), we have
	\begin{align*}
		\left|\partial_{x}^{\alpha} \partial_{x'}^{\alpha'} 
		\kappaxt(x,x')\right|
		&\lesssim 2^{-(\tilde{d} + \frac{1}{2})\jy'} \esssup_{y' \in \Omega_{\lambday'}}
		\bigg| \partial_{y'}^{\tilde{d}} \int_{\Omega_{\lambday}}
		\partial_{x}^{\alpha} \partial_{x'}^{\alpha'} \kappa(x, y, x', y')
		\psi_{\lambday}(y) \dint y \bigg| \\
		&\lesssim 2^{-\tilde{d}\jy'} 2^{-\frac{1}{2} (\jy' -\jy)} \esssup_{y' \in \Omega_{\lambday'}}
		\int_{\Omega_{\lambday}} \|\xvec -\xvec'\|^{-(2 + 2q + \tilde{d} + 
		\alpha + \alpha')} \dint y.
	\end{align*}
	Now, for any \(y' \in \Omega_{\lambday'}\), we infer that there holds
	\begin{align*}
		&\int_{\Omega_{\lambday}} \|\xvec -\xvec'\|^{-(2 + 2q +
		\tilde{d} + \alpha + \alpha')} \dint y\\
		&\qquad\qquad \lesssim \int_{0}^{|x - x'|} |x - x'|^{-(2 + 2q +
		\tilde{d} + \alpha + \alpha')}
		\dint t
		+ \int_{|x - x'|}^{\infty} t^{-(2 + 2q + \tilde{d} + \alpha +
		\alpha')} \dint t\\
		&\qquad\qquad \lesssim |x - x'|^{-(1 + 2q + \tilde{d} + \alpha + \alpha')}.
	\end{align*}
	Combined, we arrive at
	\begin{align}
		\label{eq:calderon_zygmund_decay}
		\left|\partial_{x}^{\alpha} \partial_{x'}^{\alpha'} 
		\kappaxt(x,x')\right|
		\lesssim 2^{-\tilde{d}\My(\jbfm,\jbfm')} 
		2^{-\frac{1}{2}|\jy -\jy'|} |x-x'|^{-(1 + 2q + \tilde{d} 
		+ \alpha + \alpha')}.
	\end{align}

	In the next step, we argue along the lines of \cite{DHS06,Rei08}, i.e.,
	we decompose \(\psi_{\lambdax} = \fxt + \fxb\), where \(\fxt\) is a smooth
	extension of \(\psi_{\lambdax}\) which agrees with it on the smooth part
	where \(\psi_{\lambdax'}\) is located.
	Note that, since \(\sigmax(\lambdabfm,\lambdabfm') > 0\),
	there holds \(\sigmax(\lambdabfm,\lambdabfm') \gtrsim 2^{-\jx'}\).
	Hence, \(\fxt\) coincides with \(\psi_{\lambdax}\) on a neighbourhood of
	\(\psi_{\lambdax'}\) such that \(\Dist(\Supp \fxb, \linebreak 
	\Omega_{\lambdax'}) \geq \sigmax(\lambdabfm,\lambdabfm')\).
	Additionally, there holds \(\|\fxb\|_{L^{\infty}(\square)} \lesssim
	2^{\frac{1}{2}\jx}\), and \(\|\fxt\|_{H^{s}(\square)} \lesssim 2^{s \jx}\).
	Therefore, for \(x' \in \Omega_{\lambdax'}\), there holds
	\begin{align}
		\notag
		&\left|\partial_{x'}^{\tilde{d}} \int_{\Omega_{\lambdax}}
		\kappaxt(x, x') \fxb(x) \dint x \right| \\
		\notag
		&\qquad \lesssim 2^{-\tilde{d}\My(\jbfm,\jbfm')} 2^{-\frac{1}{2}|\jy -\jy'|}
		\int_{\Supp \fxb} |x - x'|^{-(1 + 2q + 2\tilde{d})}
		\left|\fxb(x)\right| \dint x \\
		\notag
		&\qquad \lesssim 2^{-\tilde{d}\My(\jbfm,\jbfm')} 2^{-\frac{1}{2}|\jy -\jy'|}
		2^{\frac{1}{2} \jx} \int_{\sigmax(\lambdabfm,\lambdabfm')}^{\infty}
		t^{-(1 + 2q + 2\tilde{d})} \dint t \\
		\label{eq:estimate_fxb}
		&\qquad \lesssim 2^{-\tilde{d}\My(\jbfm,\jbfm')} 2^{-\frac{1}{2}|\jy -\jy'|}
		2^{\frac{1}{2} \jx} \sigmax(\lambdabfm,\lambdabfm')^{-(2q + 2\tilde{d})}.
	\end{align}

	We consider next the operator \(\Lcal_{\mathrm{x}}\) 
	with kernel \(\kappaxt\) 
	and an extension \(\Lcal_{\mathrm{x}}^{\sharp}\) thereof,
	defined by
	\begin{equation*}
		\groupp[\big]{\Lcal_{\mathrm{x}}^{\sharp} u}(x)
		\isdef \int_{\Rbbb} \chi(x) \chi(x') \kappaxt(x, x') u(x')
		\dint x,
	\end{equation*}
	where \(\chi \in C^{\infty}_{0}(\Rbbb)\) is a suitable cutoff function
	with \(\chi\big|_{[0, 1]} = 1\) and \(\chi\big|_{\Rbbb \setminus
	[-1, 2]} = 0\).
	Moreover, following the argumentation of 
	\cite[Lemma 2.1.4 \& Theorem 2.1.7]{Rei08},
	the estimate \eqref{eq:calderon_zygmund_decay} implies that
	\(\Lcal_{\mathrm{x}}^{\sharp}\) is continuous 
	of order \(2q + \tilde{d}\) 
	with norm 
	\(2^{-\tilde{d}\My(\jbfm,\jbfm')} \linebreak
	2^{-\nicefrac{1}{2} |\jy - \jy'|}\),
	which implies for \(\fxt\) that
	\begin{align*}
		\norms[\big]{\Lcal_{\mathrm{x}}^{\sharp} \fxt}_{W^{\tilde{d},
		\infty}(\Omega_{\lambdax'})} 
		&\lesssim 2^{-\tilde{d}\My(\jbfm,\jbfm')} 
		2^{-\frac{1}{2} |\jy - \jy'|}
		\norms[\big]{\fxt}_{W^{2q + 2\tilde{d}, \infty}(\Rbbb)} \\
		&\lesssim 2^{-\tilde{d}\My(\jbfm,\jbfm')} 
		2^{-\frac{1}{2} |\jy - \jy'|}
		2^{(\frac{1}{2} + 2q + 2\tilde{d})\jx}.
	\end{align*}
	Additionally using \(\sigmax(\lambdabfm,\lambdabfm') \lesssim 2^{-\jx}\),
	a combination of these estimates yields
	\begin{align}
		\notag
		\left|\partial_{x'}^{\tilde{d}} \int_{\Omega_{\lambdax}}
		\kappaxt(x, x') \fxt(x) \dint x \right| 
		&\lesssim 2^{-\tilde{d}\My(\jbfm,\jbfm')} 2^{-\frac{1}{2}|\jy -\jy'|}
		\big| \fxt \big|_{W^{2q + 2\tilde{d},\infty}(\Omega_{\lambdax'})}\\
		&\lesssim 2^{-\tilde{d}\My(\jbfm,\jbfm')} 2^{-\frac{1}{2}|\jy -\jy'|}
		\notag
		2^{(\frac{1}{2} + 2q + 2\tilde{d})\jx}\\
		\label{eq:estimate_fxt}
		&\lesssim 2^{-\tilde{d}\My(\jbfm,\jbfm')} 2^{-\frac{1}{2}|\jy -\jy'|}
		2^{\frac{1}{2}\jx} \sigmax(\lambdabfm,\lambdabfm')^{-(2q + 2\tilde{d})}.
	\end{align}

	By inserting \eqref{eq:estimate_fxb} and
	\eqref{eq:estimate_fxt} into
	\eqref{eq:estimate_with_mod_kernel}, we obtain the that
	\begin{align*}
		\big|\big\langle \psi_{\lambdabfm'}, \Lcal \psi_{\lambdabfm}
		\big\rangle\big| 
		&\lesssim 2^{-\frac{1}{2}|\jbfm -\jbfm'|_1}
		2^{-\tilde{d}(\Mx(\jbfm,\jbfm') +\My(\jbfm,\jbfm'))} 
		2^{-q(|\jbfm|_{\infty} + |\jbfm'|_{\infty})}
		\sigmax^{-(2q + 2\tilde{d})},
	\end{align*}
	which is what we wanted to show.
	The estimate in the second coordinate direction is validated in complete
	analogy.
\end{proof}

\begin{corollary}
	\label{cor:near_field_previous_only_1_vanishing}
	We have the estimates
	\begin{align*}
		\big|\big\langle \psi_{\lambdabfm'}, \Lcal \psi_{\lambdabfm}
		\big\rangle\big| 
		&\lesssim 2^{-\frac{1}{2}|\jbfm -\jbfm'|_1}
		2^{-\tilde{d}\maxm(\jbfm,\jbfm')}
		2^{-q(|\jbfm|_{\infty} + |\jbfm'|_{\infty})} 
		\sigmax(\lambdabfm,\lambdabfm')^{-(2q + \tilde{d})}, \\
		\big|\big\langle \psi_{\lambdabfm'}, \Lcal \psi_{\lambdabfm}
		\big\rangle\big| 
		&\lesssim 2^{-\frac{1}{2}|\jbfm -\jbfm'|_1}
		2^{-\tilde{d}\maxm(\jbfm,\jbfm')}
		2^{-q(|\jbfm|_{\infty} + |\jbfm'|_{\infty})} 
		\sigmay(\lambdabfm,\lambdabfm')^{-(2q + \tilde{d})},
	\end{align*}
	provided that \(0 < \sigmax(\lambdabfm,\lambdabfm') \lesssim
	2^{-\mx(\lambdabfm,\lambdabfm')}\) or
	\(0 < \sigmay(\lambdabfm,\lambdabfm') \lesssim
	2^{-\my(\lambdabfm,\lambdabfm')}\),
	respectively.
\end{corollary}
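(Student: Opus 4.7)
The strategy is to mimic the proof of Lemma \ref{lm:near_field_previous}, but to apply the cancellation property only in the single coordinate direction realising the maximal level $\maxm(\jbfm,\jbfm')$, instead of in both coordinate directions. This weakens the gain in the decay factor from $2^{-\tilde d(\Mx+\My)}$ down to $2^{-\tilde d \maxm}$, and correspondingly reduces the price paid in the singular factor from $\sigmax^{-(2q+2\tilde d)}$ to $\sigmax^{-(2q+\tilde d)}$. Note that we cannot derive the corollary from the lemma by a trivial estimate using $\sigmax\lesssim 2^{-\mx}$, since the two bounds are not comparable in general; a direct argument is required.

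By the natural symmetries of the setting it suffices to treat one case, say $\maxm(\jbfm,\jbfm')=\jx'$, which entails $\jx'=\Mx(\jbfm,\jbfm')$ and in particular $\jx'\geq\jx$. First I apply $\tilde d$ vanishing moments of $\psi_{\lambdax'}$ alone, exactly as in \eqref{eq:estimate_with_mod_kernel}, which produces the factor $2^{-(\tilde d+1/2)\jx'}=2^{-\tilde d\maxm}\cdot 2^{-\jx'/2}$ together with an essential supremum over $x'$ of $\partial_{x'}^{\tilde d}\!\int\kappaxt(x,x')\psi_{\lambdax}(x)\,dx$. In the next step, unlike in Lemma \ref{lm:near_field_previous}, I do \emph{not} apply further vanishing moments in the $y$-direction to $\kappaxt$. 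Instead, I estimate the $y$-integration directly, exploiting $\|\psi_{\lambday}\|_{L^\infty}\sim 2^{\jy/2}$, $\|\psi_{\lambday'}\|_{L^\infty}\sim 2^{\jy'/2}$, the support sizes $|\Omega_{\lambday}|\sim 2^{-\jy}$, $|\Omega_{\lambday'}|\sim 2^{-\jy'}$, and the asymptotic smoothness \eqref{eq:asymptotic_smooth_decay} of the original kernel. Splitting the $(y,y')$-integration according to whether $|y-y'|\leq|x-x'|$ or not, one obtains
\begin{equation*}
\bigl|\partial_x^\alpha\partial_{x'}^{\alpha'}\kappaxt(x,x')\bigr|\lesssim 2^{-\frac{1}{2}|\jy-\jy'|}|x-x'|^{-(1+2q+\alpha+\alpha')}.
\end{equation*}

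The subsequent treatment of the $x$-integration then proceeds verbatim as in Lemma \ref{lm:near_field_previous}: the wavelet $\psi_{\lambdax}$ is split as $\fxt+\fxb$, where $\fxb$ is supported in the boundary layer near $\Omega_{\lambdax'}^\sigma$ with $\|\fxb\|_{L^\infty}\lesssim 2^{\jx/2}$ and $\fxt$ is a smooth extension. The contribution of $\fxb$ is controlled by $\int_{\sigmax}^\infty t^{-(1+2q+\tilde d)}\,dt\lesssim\sigmax^{-(2q+\tilde d)}$, and the contribution of $\fxt$ is estimated through the Sobolev-type bound $\|\fxt\|_{H^{1/2+2q+\tilde d}}\lesssim 2^{(1/2+2q+\tilde d)\jx}$, which combined with $\sigmax\lesssim 2^{-\jx}$ yields the same order $2^{\jx/2}\sigmax^{-(2q+\tilde d)}$. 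Collecting all factors gives exactly
\begin{equation*}
\bigl|\langle\psi_{\lambdabfm'},\Lcal\psi_{\lambdabfm}\rangle\bigr|\lesssim 2^{-\frac{1}{2}(\jx'-\jx)-\frac{1}{2}|\jy-\jy'|-\tilde d\maxm-q(|\jbfm|_\infty+|\jbfm'|_\infty)}\sigmax^{-(2q+\tilde d)},
\end{equation*}
which is the first asserted estimate. The estimate involving $\sigmay$ follows by the same argument with the roles of the $x$- and $y$-directions exchanged.

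The principal technical obstacle is the intermediate kernel bound on $\kappaxt$ without the help of $y$-direction cancellation: one must be careful that the mere $L^\infty\!\times L^1$ pairing does not destroy the factor $2^{-|\jy-\jy'|/2}$ that would otherwise be supplied by a vanishing moment argument. Everything else reduces to a rerun of the machinery already developed for Lemma \ref{lm:near_field_previous}.
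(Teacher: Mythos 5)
Your overall strategy — rerun the machinery of Lemma~\ref{lm:near_field_previous}, but apply the Taylor/vanishing-moment argument in only one of the two coordinate directions — matches the paper's (very terse) proof, and the technical core is sound. In particular, your intermediate bound $|\partial_x^\alpha\partial_{x'}^{\alpha'}\kappaxt(x,x')|\lesssim 2^{-\frac{1}{2}|\jy-\jy'|}|x-x'|^{-(1+2q+\alpha+\alpha')}$ without using the vanishing moments of $\psi_{\lambday'}$ is correct (the factor $2^{-\frac12|\jy-\jy'|}$ comes cleanly from the $L^\infty$--$L^1$ pairing of $\psi_{\lambday}$ and $\psi_{\lambday'}$), the $\fxt+\fxb$ splitting then delivers $\sigmax^{-(2q+\tilde d)}$, and the bookkeeping of the prefactors is accurate. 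You are also right that the corollary is not a trivial consequence of the lemma, since $\sigmax\leq 2^{-\mx}$ is too weak to trade $\sigmax^{-\tilde d}$ for $2^{-\tilde d\min(\Mx,\My)}$.

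The one point to be careful about is the reduction ``by the natural symmetries of the setting it suffices to treat $\maxm(\jbfm,\jbfm')=\jx'$.'' What your argument actually produces is the factor $2^{-\tilde d\Mx(\jbfm,\jbfm')}$, because the only vanishing moments you use are those of $\psi_{\lambdax'}$. This equals $2^{-\tilde d\maxm(\jbfm,\jbfm')}$ precisely when $\Mx\geq\My$; and the $\lambdabfm\leftrightarrow\lambdabfm'$ and $\mathrm x\leftrightarrow\mathrm y$ symmetries do not let you pass from the pair ($\sigmax$-bound, $\maxm\in\{\jx,\jx'\}$) to the pair ($\sigmax$-bound, $\maxm\in\{\jy,\jy'\}$): swapping $\mathrm x\leftrightarrow\mathrm y$ also swaps $\sigmax$ into $\sigmay$. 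If the corollary is read literally, the case $\maxm\in\{\jy,\jy'\}$ combined with the $\sigmax$-bound needs a separate (but analogous) ``mixed'' argument in which the Taylor expansion is done in the $\mathrm y$-direction — which also improves the decay of $\kappaxt$ to $|x-x'|^{-(1+2q+\tilde d+\alpha+\alpha')}$ — while the $\fxt+\fxb$ decomposition is still done in $\mathrm x$. You should either spell out this second case or make explicit the restriction $\Mx\geq\My$ (which is what in fact holds in the single place the corollary is invoked, namely Lemma~\ref{lm:near_field_previous_first}, where $(|\jbfm|_\infty,|\jbfm'|_\infty)=(\jx,\jx')$ forces $\maxm=\Mx$). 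As stated, your reduction to one case is not fully justified, even though the case you do treat is exactly the one the paper needs.
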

\begin{proof}
	We use the same arguments as in the previous proof, but do not 
	exploit vanishing moments in one of the coordinate directions.
\end{proof}

\begin{lemma}
	\label{lm:mixed_field_both_far}
	Assume that \(0 < \sigmax(\lambdabfm,\lambdabfm') \leq
	2^{-\mx(\lambdabfm,\lambdabfm')}\) and, additionally, that
	\(\deltay(\lambdabfm,\lambdabfm') > 0\).
	Then, we have
	\begin{equation}
		\label{eq:cor_mixed_field_both_far}
		\begin{aligned}
			\big|\big\langle \psi_{\lambdabfm'}, \Lcal \psi_{\lambdabfm}
			\big\rangle\big|
			&\lesssim 2^{-\frac{1}{2}(\jy + \jy' + |\jx -\jx'|)} 
			2^{-\tilde{d}(\Mx(\jbfm,\jbfm') + \My(\jbfm,\jbfm'))}\\
			&\qquad \cdot 2^{-q(|\jbfm|_\infty + |\jbfm'|_{\infty})} 
			\deltay(\lambdabfm,\lambdabfm')^{-(1 + q + \tilde{d})} 
			\sigmax(\lambdabfm,\lambdabfm')^{-(q + \tilde{d})}.
		\end{aligned}
	\end{equation}
\end{lemma}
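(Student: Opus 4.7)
The plan is to interleave the two techniques already used in this section: the integration argument exploiting \(\deltay > 0\) from Lemma~\ref{lm:long_face_estimate}, and the smooth/singular decomposition \(\psi_{\lambdax}=\fxt+\fxb\) exploiting \(\sigmax > 0\) from Lemma~\ref{lm:near_field_previous}. The target exponents \(\deltay^{-(1+q+\tilde{d})}\) and \(\sigmax^{-(q+\tilde{d})}\), whose sum is \(-(1+2q+2\tilde{d})\), already indicate that the available kernel decay must be split between the two coordinate directions in the right proportion. Without loss of generality one may assume \(\jx'\geq\jx\) and \(\jy'\geq\jy\), so that \(\Mx=\jx'\) and \(\My=\jy'\); the remaining cases follow by relabelling.

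First, I would apply \(\tilde{d}\) vanishing moments of \(\psi_{\lambdax'}\) as in the opening of the proof of Lemma~\ref{lm:near_field_previous}, reducing the task to controlling
\[
\esssup_{x'\in\Omega_{\lambdax'}}\left|\partial_{x'}^{\tilde{d}}\int_{\Omega_{\lambdax}}\kappaxt(x,x')\,\psi_{\lambdax}(x)\dint x\right|
\]
at the price of the prefactor \(2^{-(\tilde{d}+\frac{1}{2})\jx'}\,2^{-q(|\jbfm|_{\infty}+|\jbfm'|_{\infty})}\), where \(\kappaxt\) is defined as in that proof. The crucial new ingredient is then to apply \(\tilde{d}\) vanishing moments of \(\psi_{\lambday'}\) and to exploit the pointwise bound \(|\partial_{y'}^{\tilde{d}}\partial_x^{\alpha}\partial_{x'}^{\alpha'}\kappa(\xvec,\xvec')|\lesssim\|\xvec-\xvec'\|^{-(2+2q+\tilde{d}+\alpha+\alpha')}\) combined with the power split
\[
\|\xvec-\xvec'\|^{-(2+2q+\tilde{d}+\alpha+\alpha')}\leq |y-y'|^{-(1+q+\tilde{d})}\,|x-x'|^{-(1+q+\alpha+\alpha')},
\]
which is valid because \(\|\xvec-\xvec'\|\geq\max\{|x-x'|,|y-y'|\}\). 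Using \(|y-y'|\geq\deltay\) together with \(\|\psi_{\lambday}\|_{L^{1}}\lesssim 2^{-\jy/2}\), this yields the hybrid kernel estimate
\[
\big|\partial_x^{\alpha}\partial_{x'}^{\alpha'}\kappaxt(x,x')\big|\lesssim 2^{-(\tilde{d}+\frac{1}{2})\jy' -\jy/2}\,\deltay^{-(1+q+\tilde{d})}\,|x-x'|^{-(1+q+\alpha+\alpha')}.
\]

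Specialising to \(\alpha=0\) and \(\alpha'=\tilde{d}\), the final step is then the \(\fxt+\fxb\) decomposition of Lemma~\ref{lm:near_field_previous} applied to the remaining one-dimensional integration in \(x\): the singular part \(\fxb\) contributes \(2^{\jx/2}\int_{\sigmax}^{\infty}t^{-(1+q+\tilde{d})}\dint t\lesssim 2^{\jx/2}\sigmax^{-(q+\tilde{d})}\), while the smooth part \(\fxt\) is controlled via \(|\fxt|_{W^{q+\tilde{d},\infty}}\lesssim\|\fxt\|_{H^{\frac{1}{2}+q+\tilde{d}}}\lesssim 2^{(\frac{1}{2}+q+\tilde{d})\jx}\), which under the hypothesis \(\sigmax\lesssim 2^{-\mx}=2^{-\jx}\) is likewise bounded by \(2^{\jx/2}\sigmax^{-(q+\tilde{d})}\). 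Multiplying all factors and collecting the scaling exponents as \(-(\tilde{d}+\tfrac{1}{2})(\jx'+\jy')-\tfrac{\jy}{2}+\tfrac{\jx}{2}=-\tilde{d}(\Mx+\My)-\tfrac{1}{2}(\jy+\jy'+|\jx-\jx'|)\) delivers the claimed inequality. The only genuine obstacle is the choice of the power split above, since an unbalanced allocation would either produce a divergent \(x\)-integration at the scale \(\sigmax\) or yield an insufficient \(\deltay\)-factor.
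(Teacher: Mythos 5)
Your proof is correct and takes essentially the same approach as the paper's: you replace the Calder\'on--Zygmund bound \eqref{eq:calderon_zygmund_decay} on \(\kappaxt\) from Lemma~\ref{lm:near_field_previous} with a hybrid estimate obtained by splitting the kernel decay between \(|y-y'|^{-(1+q+\tilde d)}\) and \(|x-x'|^{-(1+q+\alpha+\alpha')}\), and then run the \(\fxt+\fxb\) decomposition with the kernel now viewed as being of order \(q\) rather than \(2q+\tilde d\). The final exponent bookkeeping checks out as well.
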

\begin{proof}
	We follow the proof of Lemma \ref{lm:near_field_previous},
	except for the estimate \eqref{eq:calderon_zygmund_decay} on \(\kappaxt\).
	Instead, we note that by Young's inequality,
	\begin{align*}
		\left|\partial_{x}^{\alpha} \partial_{x'}^{\alpha'} 
		\kappaxt(x,x')\right|
		&\lesssim 2^{-\tilde{d}\My(\jbfm,\jbfm')} 2^{-\frac{1}{2}(\jy + \jy')} 
		\esssup_{(y,y') \in \Omega_{\lambday} \times \Omega_{\lambday'}} 
		\|\xvec -\xvec'\|^{-(2 + 2q + \tilde{d} + \alpha + \alpha')}\\
		&\lesssim 2^{-\tilde{d}\My(\jbfm,\jbfm')} 2^{-\frac{1}{2}(\jy + \jy')}
		\deltay(\lambdabfm,\lambdabfm')^{-(1+q+\tilde{d})} 
		|x-x'|^{-(1 + q + \alpha + \alpha')}.
	\end{align*}
	Also here, we can proceed along the lines of \cite{DHS06,Rei08}, 
	with the adaptation that the one-dimensional integral operator 
	with kernel \(\kappaxt\) is now of order \(q\) and the constant
	is given by \(2^{-\tilde{d} \My(\jbfm,\jbfm')} 2^{-\frac{1}{2}(\jy + \jy')}
	\deltay(\lambdabfm,\lambdabfm')^{-(1 + q + \tilde{d})}\).
	This results in \eqref{eq:cor_mixed_field_both_far}.
\end{proof}

\begin{corollary}
	\label{cor:mixed_field_both_far_second} 
	Assume that the requirements of Lemma \ref{lm:mixed_field_both_far}
	hold and, additionally, that \(\mx(\jbfm,\jbfm') \leq \my(\jbfm,\jbfm')\).
	Then, there holds
	\begin{align}
		\label{eq:cor_mixed_field_both_far_second}
		\big|\big\langle \psi_{\lambdabfm'}, \Lcal \psi_{\lambdabfm}
		\big\rangle\big|
		&\lesssim 2^{-\frac{1}{2}(\jy + \jy' + |\jx -\jx'|)} 
		2^{-(\tilde{d}+q)(|\jbfm|_{\infty} + |\jbfm'|_{\infty})}
		\deltay(\lambdabfm,\lambdabfm')^{-(1 + q + \tilde{d})} 
		\sigmax(\lambdabfm,\lambdabfm')^{-(q + \tilde{d})}.
	\end{align}
\end{corollary}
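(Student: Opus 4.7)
The corollary differs from Lemma~\ref{lm:mixed_field_both_far} only in its dyadic prefactor, which is sharpened from $2^{-\tilde{d}(\Mx+\My)}$ to the (generally tighter) $2^{-\tilde{d}(|\jbfm|_\infty+|\jbfm'|_\infty)}$ under the additional hypothesis $\mx(\jbfm,\jbfm') \leq \my(\jbfm,\jbfm')$; the $\deltay^{-(1+q+\tilde{d})}$, $\sigmax^{-(q+\tilde{d})}$, rescaling, and \(L^{1}\)-type factor $2^{-\frac{1}{2}(\jy+\jy'+|\jx-\jx'|)}$ are inherited unchanged. My plan is therefore to reuse the proof of Lemma~\ref{lm:mixed_field_both_far} as much as possible and gain the missing dyadic decay by a sharper use of vanishing moments in the $y$-direction.

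First, up to symmetry I may assume $\jx \leq \jx'$, so $\mx = \jx \leq \my \leq \jy$ and hence $|\jbfm|_\infty = \jy$. A short case analysis on the orderings of the four levels shows that the inequality $\Mx + \My \geq |\jbfm|_\infty + |\jbfm'|_\infty$ holds in every configuration except $\jx \leq \jx' < \my \leq \maxm(\jbfm,\jbfm')$. In all other configurations Lemma~\ref{lm:mixed_field_both_far} already yields the bound, so it remains to treat this remaining \emph{exceptional} configuration, in which both $y$-levels strictly exceed both $x$-levels.

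Second, in the exceptional configuration I would follow the proof of Lemma~\ref{lm:mixed_field_both_far} step by step but, in the $y$-integration producing the reduced one-dimensional kernel $\kappaxt$, apply the one-dimensional cancellation property~\eqref{eq:cancellation_property_elementary} to \emph{both} $\psi_{\lambday}$ and $\psi_{\lambday'}$ rather than only to the higher-level wavelet. Since $\deltay > 0$ the kernel is smooth in both $y$-variables, so each wavelet contributes a full $\tilde{d}$ vanishing moments. After the same Young-inequality reduction, the relevant bound becomes $\|\xvec-\xvec'\|^{-(2+2q+2\tilde{d}+\alpha+\alpha')}$, which I would split between $\deltay$ and $|x-x'|$ using $\|\xvec-\xvec'\|^2 \geq \deltay^{2}+|x-x'|^{2}$. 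The resulting one-dimensional kernel $\kappaxt(x,x')$ is then fed into the $\fxt/\fxb$-decomposition and the vanishing moments of $\psi_{\lambdax'}$ exactly as in Lemma~\ref{lm:near_field_previous}, producing the $\sigmax$-dependence. Putting together the dyadic contributions from all three sharpened cancellations gives a total decay of $2^{-\tilde{d}(\jy+\jy'+\jx')}$, which in the exceptional configuration dominates the desired $2^{-\tilde{d}(\jy+\jy')} = 2^{-\tilde{d}(|\jbfm|_\infty+|\jbfm'|_\infty)}$.

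The principal obstacle is the exponent bookkeeping in this balanced split: the extra $\tilde{d}$ orders of kernel derivatives incurred by the second $y$-direction cancellation must be distributed between a $\deltay$-contribution and an $|x-x'|$-contribution, and the latter passed through the $\sigmax$-analysis of Lemma~\ref{lm:near_field_previous}, in such a way that the final exponents $\deltay^{-(1+q+\tilde{d})}$ and $\sigmax^{-(q+\tilde{d})}$ claimed by the corollary are preserved. The constraints $\sigmax \leq 2^{-\mx}$ and $\mx \leq \my$ are used precisely at this point to absorb any excess into the tightened dyadic prefactor.
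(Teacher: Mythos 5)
Your identification of the regime where Lemma~\ref{lm:mixed_field_both_far} falls short is correct: after fixing $\jx \leq \jx'$, the inequality $\Mx+\My \geq |\jbfm|_\infty + |\jbfm'|_\infty$ only fails when $\jx' < \my(\jbfm,\jbfm')$, and your plan to gain the missing decay by applying the one-dimensional cancellation in the $y$-direction to \emph{both} $\psi_{\lambday}$ and $\psi_{\lambday'}$ is also the paper's device. (The paper actually treats the slightly larger case $|\jbfm'|_\infty = \jy'$ uniformly, which contains your exceptional configuration.) However, the last step of your plan — retaining the vanishing moments of $\psi_{\lambdax'}$ as in Lemma~\ref{lm:near_field_previous} — introduces a gap that your ``absorption'' step cannot close.

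Concretely: after the two-sided $y$-cancellation and the Young-type split, the reduced kernel satisfies $|\partial_x^\alpha\partial_{x'}^{\alpha'}\kappaxt| \lesssim C\,|x-x'|^{-(1+q+\tilde d+\alpha+\alpha')}$ with $C = 2^{-(\tilde d + \frac12)(\jy+\jy')}\deltay^{-(1+q+\tilde d)}$. If you now differentiate $\tilde d$ times in $x'$ (vanishing moments of $\psi_{\lambdax'}$), the effective one-dimensional kernel becomes of order $q+2\tilde d$, and the $\fxb$-contribution is
$\int_{\sigmax}^\infty t^{-(1+q+2\tilde d)}\,\dint t \sim \sigmax^{-(q+2\tilde d)}$,
not $\sigmax^{-(q+\tilde d)}$. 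You gain a factor $2^{-\tilde d\jx'}$, so matching the claimed bound would require $2^{-\tilde d\jx'}\sigmax^{-\tilde d}\lesssim 1$, i.e.\ $\sigmax\gtrsim 2^{-\jx'}$. But the only available constraint is the \emph{upper} bound $\sigmax\leq 2^{-\mx}=2^{-\jx}$, and since $\jx\leq\jx'$ this goes in the wrong direction; $\sigmax$ may be arbitrarily small, making your $\fxb$-bound strictly weaker than~\eqref{eq:cor_mixed_field_both_far_second}. Shifting the extra $\tilde d$ into the $\deltay$-exponent does not help either, since then you overshoot the claimed $\deltay^{-(1+q+\tilde d)}$ and there is no lower bound on $\deltay$ to compensate.

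The fix is to \emph{drop} the $x'$-vanishing moments entirely in this case: use only the $L^1$-bound $\|\psi_{\lambdax'}\|_{L^1}\lesssim 2^{-\frac12\jx'}$, so that the $\fxt/\fxb$-decomposition runs with the undifferentiated kernel of order $q+\tilde d$ and produces exactly $\sigmax^{-(q+\tilde d)}$. Since in this regime $|\jbfm|_\infty+|\jbfm'|_\infty=\jy+\jy'$, the factor $2^{-\tilde d(\jy+\jy')}$ from the two $y$-cancellations already delivers $2^{-\tilde d(|\jbfm|_\infty+|\jbfm'|_\infty)}$, and nothing further is needed. This is precisely the paper's argument.
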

\begin{proof}
	Without loss of generality, let us assume that \(\jx \leq \jx'\).
	Then, since \(\mx(\jbfm,\jbfm') \leq \my(\jbfm,\jbfm')\), 
	we immediately see that \(|\jbfm|_{\infty} = \jy\).
	Hence, if \(|\jbfm'|_{\infty} = \jx'\),
	\eqref{eq:cor_mixed_field_both_far_second} immediately follows from
	\eqref{eq:cor_mixed_field_both_far}.

	On the other hand, if \(|\jbfm|_{\infty} = \jy'\),
	we do not use any vanishing moments in the direction of \(\mathrm{x}\),
	but only derive the estimate
	\begin{align*}
		\big|\big\langle \psi_{\lambdabfm'}, \Lcal \psi_{\lambdabfm}
		\big\rangle\big| 
		\lesssim 2^{-\frac{1}{2}\jx'} 2^{-q(|\jbfm|_{\infty} +
		|\jbfm'|_{\infty})}
		\esssup_{x' \in \Omega_{\lambdax'}} 
		\left| \int_{\Omega_{\lambdax}} \kappaxt(x, x') 
		\psi_{\lambdax}(x) \dint x \right|,
	\end{align*}
	and then conclude that the kernel \(\kappaxt\) satisfies
	\begin{align*}
		\left|\partial_{x}^{\alpha} \partial_{x'}^{\alpha'} 
		\kappaxt(x,x')\right|
		&\lesssim 2^{-(\tilde{d}+\frac{1}{2})(\jy + \jy')} 
		\esssup_{(y,y') \in \Omega_{\lambday} \times \Omega_{\lambday'}} 
		\|\xvec -\xvec'\|^{-(2 + 2q + 2\tilde{d} + \alpha + \alpha')}\\
		&\lesssim 2^{-(\tilde{d}+\frac{1}{2})(\jy + \jy')} 
		\deltay(\lambdabfm,\lambdabfm')^{-(1+q+\tilde{d})} 
		|x-x'|^{-(1 + q + \tilde{d} + \alpha + \alpha')}.
	\end{align*}
	Also here, we can proceed along the lines of \cite{DHS06,Rei08}, 
	which results in \eqref{eq:cor_mixed_field_both_far_second}.
\end{proof}

\begin{remark}
	We remark that Lemma \ref{lm:mixed_field_both_far}
	and Corollary \ref{cor:mixed_field_both_far_second} can also 
	be adapted to the opposite coordinate directions if \(\mathrm{x}\) and
	\(\mathrm{y}\) are interchanged.
\end{remark}

\section{Compression Scheme}
\label{sec:compression_general}

In this section, we are going to develop a compression 
scheme for the integral operator \(\Lcal\),
on which we will assume the following shift property
on the unit square \(\square\) and the boundary \(\Gamma\) of a
piecewise smooth Lipschitz manifold \(\Omega\).
\begin{assumption}
	\label{ass:continuity_of_L}
	There exists a \(\sigma > 0\) such that
	\(\Lcal\) and its dual operator \(\Lcal'\) are continuous operators
	\begin{align}
		\label{eq:anisotropic_shift_property}
		\Lcal\colon \Hfrak^{q,\alphabfm}(D) \to 
		\Hfrak^{-q, \alphabfm}(D),
		\qquad
		\Lcal'\colon \Hfrak^{q,\alphabfm}(D) \to 
		\Hfrak^{-q, \alphabfm}(D),
	\end{align}
	for all \(\alphabfm \geq \zerobfm\) such that \(\norms{\alphabfm}_1 < \sigma\),
	where \(D \in \{\square,\, \Gamma\}\).
\end{assumption}

\begin{remark}
	Classical integral operators are known to satisfy the shift property 
	\(\Lcal\colon H^{q + t}(\Gamma) \to H^{-q + t}(\Gamma)\) for 
	\(|t| < \sigma\) depending on the smoothness of the underlying manifold. 
	Especially, in view of \cite{Cos88,SS11}, 
	we have \(\sigma \geq \nicefrac{1}{2}\).
	As moreover the commutator \([\Lcal,\partial_{\xvec}^{\alphabfm}]
		\isdef \Lcal \partial_{\xvec}^{\alphabfm} 
	- \partial_{\xvec}^{\alphabfm}\Lcal\)
	inherits its properties from \(\Lcal\) in accordance with \cite{SW99},
	the isotropic shift implies the anisotropic shift property
	\eqref{eq:anisotropic_shift_property} as long as
	\(|\alphabfm|_1 < \sigma\).
\end{remark}

Under Assumption \ref{ass:continuity_of_L}, 
we will show that, for each \(r \in \Nbbb\),
the operator matrix \(\Lbfm\) can be compressed into a matrix \(\Lbfm_r\) 
such that there holds \(\|\Lbfm -\Lbfm_r\|_2 \lesssim 2^{-sr}\) for any
\begin{equation}
	\label{eq:definition_of_s_star}
	s < s^\star \isdef \min\left\{ \sigma,\ 
		\alpha \bar{s},\ 
		\frac{\tilde{d}}{2} + q - 
		\max\left\{0,\ q\alpha \frac{\bar{s}}{\nu(\gamma, q)}\right\}
	\right\}.
\end{equation}
Herein, \(\alpha > 1\) is an arbitrary but fixed parameter.
Moreover, \(\bar{s}\) denotes the sparse-grid approximation rate
\cite{GK00} and \(\nu(\gamma, q)\) denotes a parameter depending on
the regularity \(\gamma\) and the operator order \(q\). 
They are given by
\begin{align}
	\label{eq:sbar_nu}
	\bar{s} \isdef
	\begin{cases}
		d-q, &q \geq 0,\\
		d - \nicefrac{q}{2}, &q < 0,
	\end{cases}
	&&
	\nu(\gamma, q) \isdef
	\begin{cases}
		\gamma -q, &q \geq 0,\\
		\gamma, &q < 0.
	\end{cases}
\end{align}
For simplicity, we shall shortly write \(\nu \isdef \nu(\gamma, q)\).

Meanwhile, each row and column of \(\Lbfm_r\) contains no more than 
\(\Ocal(r^2 2^r)\) nontrivial entries. This implies the \(s^\star\)-compressibility 
of the bi-infinite matrix \(\Lbfm\), compare \cite{CDD02}. 
Indeed, if the compressed matrices \(\Lbfm_r\) satisfy the above conditions, 
it is easy to conclude that \(\tilde{\Lbfm}_r \isdef \Lbfm_{\lceil r + 
\log_2(\alpha_r)\rceil}\) satisfies Definition~\ref{def:s_star_compressibility}
with for example \(\alpha_r \isdef r^{-(3+\epsilon)}\) for some small 
\(\epsilon > 0\). 
This was in a similar way also remarked in 
\cite{Ste04}.
Note that we especially have \(s^{\star} > \bar{s}\)
if \(\sigma\) and \(\tilde{d}\) are sufficiently large.
Hence, the adaptive algorithm will produce approximations converging 
with the approximation rate provided by sparse grid approximation
for arbitrarily smooth functions,
cf.\ \cite{CDD01,CDD02,SS08}.

For the sake of convenience, we are going to treat all entries as if 
they corresponded to wavelet functions. 
This especially means that we are going to assume that we 
have a cancellation property for all functions, 
even if the underlying functions are 
scaling functions on level \(j_0\). 
Indeed, if for example \(\psi_{\lambdabfm} = 2^{-q|\lambdabfm|_{\infty}} 
\phi_{\lambdax} \otimes \psi_{\lambday}\), we may always use the estimate
\begin{align*}
	\big|\big\langle \Lcal \psi_{\lambdabfm'}, \psi_{\lambdabfm}
	\big\rangle\big| 
	&\lesssim 2^{-\frac{1}{2} j_0 -(\tilde{d} + \frac{1}{2})\jy} 
	2^{-q|\lambdabfm|_{\infty}}
	\left|  \Lcal \psi_{\lambdabfm'}
	\right|_{W^{\tilde{d},\infty}(\Omega_{\lambdabfm})}\\
	&\lesssim 2^{-(\tilde{d} + \frac{1}{2})|\jbfm|_{1}} 
	2^{-q|\lambdabfm|_{\infty}}
	\left|  \Lcal \psi_{\lambdabfm'}
	\right|_{W^{2\tilde{d},\infty}(\Omega_{\lambdabfm})},
\end{align*}
since \(2^{-\tilde{d}j_0}\) is a constant and the lower order derivative of  
an asymptotically smooth kernel can be estimated by a higher order derivative.
For all other required estimates, we can use a similar argument.

\subsection{Diagonal Cutoff}

First, we are going to drop the entries far from the diagonal
of the matrix. 
A possible approach is keeping only entries \(\ell_{\lambdabfm,\lambdabfm'}\)
for which \(|\lambdabfm -\lambdabfm'|_{\infty}\) is sufficiently small.

\begin{theorem}
	For \(\alpha > 1\), let us denote \(\Lbfm_r^{(0)}\) 
	as the matrix with the entries
	\begin{equation}
		\big[\ell_r ^{(0)} \big]_{\lambdabfm, \lambdabfm'} \isdef
		\begin{cases}
			\langle \Lcal \psi_{\lambdabfm'}, 
			\psi_{\lambdabfm} \rangle_{\square}, & |\lambdabfm -
			\lambdabfm'|_{\infty} \leq \alpha r
			\frac{\bar{s}}{\nu}, \\
			0, &\text{else}.
		\end{cases}
		\label{eq:compression_diagonal_cutoff}
	\end{equation}
	Then, we have the error estimate \(\|\Lbfm - \Lbfm_r^{(0)}\| \lesssim 2^{-sr}\)
	for any \(s < \min\{\sigma,\, \alpha \bar{s} \}\).
	\label{thm:diagonal_cutoff}
\end{theorem}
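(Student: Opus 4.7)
My plan is to derive the bound from the anisotropic shift property of Assumption \ref{ass:continuity_of_L} via a block-norm estimate followed by a geometric summation over level shifts. The key observation is that, for any $\sbfm \in \Rbbb^2$ with $|\sbfm|_1 < \sigma$ (and $|s_1|, |s_2|$ within the validity range of the norm equivalences \eqref{eq:norm_equivalence_GK} and \eqref{eq:norm_equivalence_GK_dual}), the block $\Lbfm^{(\jbfm, \jbfm')}$ of $\Lbfm$ indexed by $\nablabfm_{\jbfm} \times \nablabfm_{\jbfm'}$ satisfies
\begin{equation*}
\bigl\|\Lbfm^{(\jbfm, \jbfm')}\bigr\|_{\ell^2 \to \ell^2} \lesssim 2^{\sbfm \cdot (\jbfm' - \jbfm)}.
\end{equation*}
This follows by testing against single-level wavelet expansions $v',w$, rewriting $\langle\Lbfm^{(\jbfm,\jbfm')}\vbfm',\wbfm\rangle = \langle\Lcal v',w\rangle_\square$, invoking the shift property together with the duality $(\Hfrak^{-q,\sbfm})' = \Hfrak^{q,-\sbfm}$ to bound this by $\|v'\|_{\Hfrak^{q,\sbfm}} \|w\|_{\Hfrak^{q,-\sbfm}}$, and collapsing these norms via the norm equivalences to the scaled Euclidean expressions $2^{\sbfm \cdot \jbfm'}\|\vbfm'\|_{\ell^2}$ and $2^{-\sbfm \cdot \jbfm}\|\wbfm\|_{\ell^2}$.

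Next, I would regroup the dropped entries according to their level shift $(k_1, k_2) \isdef \jbfm' - \jbfm \in \mathbb{Z}^2$. Letting $\Lbfm^{(k_1, k_2)}$ denote the sub-matrix carrying exactly the entries with this fixed shift, the block bound combined with the decoupling of different output levels gives $\|\Lbfm^{(k_1, k_2)}\|_{\ell^2 \to \ell^2} \lesssim 2^{\sbfm \cdot (k_1, k_2)}$. Setting $M \isdef \alpha(d-q)/(\gamma-q)\,r$ and applying the triangle inequality to $\Lbfm - \Lbfm_r^{(0)} = \sum_{\max\{|k_1|,|k_2|\} > M} \Lbfm^{(k_1, k_2)}$, it suffices to handle the two sub-sums $\{|k_1| > M\}$ and $\{|k_2| > M\}$ separately. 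For the first I would choose $\sbfm \isdef \bigl(-s\,\Sign(k_1),\, -\epsilon\,\Sign(k_2)\bigr)$ with $\epsilon > 0$ small and $s + \epsilon < \sigma$, yielding $\|\Lbfm^{(k_1, k_2)}\| \lesssim 2^{-s|k_1| - \epsilon|k_2|}$; the double summation factorises into two geometric series, with the restriction $|k_1| > M$ producing decay of order $2^{-sM}$ in one factor while the positivity of $\epsilon$ ensures absolute convergence in the other. The sub-case $|k_2| > M$ is symmetric.

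The main obstacle is the anisotropic nature of the shift bound: the scalar budget $|\sbfm|_1 < \sigma$ has to be split between the two coordinate directions, and since only one direction is constrained by the level gap, one must spend essentially all of it on the constrained direction while reserving only an infinitesimal weight in the other purely to make the Schur-type sum converge. Letting $\epsilon \to 0^+$ delivers $\|\Lbfm - \Lbfm_r^{(0)}\|_{\ell^2 \to \ell^2} \lesssim 2^{-sM}$ for any $s < \min\{\sigma, \gamma - q\}$. Substituting the definition of $M$ gives a decay exponent $s\alpha(d-q)/(\gamma-q)$; exploiting $\alpha > 1$ together with the inequality $\gamma \leq d$ standard for piecewise polynomial wavelets, one checks that the two regimes $\sigma \leq \alpha(d-q)$ (where one pushes $s$ towards $\sigma$) and $\sigma > \alpha(d-q)$ (where one pushes $s$ towards $\gamma - q$) jointly exhaust all rates strictly below $\min\{\sigma, \alpha(d-q)\}$, thus establishing the claim.
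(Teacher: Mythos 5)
Your proposal is correct, and it rests on the same two ingredients as the paper's argument: the anisotropic shift property of Assumption~\ref{ass:continuity_of_L} combined with the norm equivalences of Theorem~\ref{thm:Griebel_Knapek_norm_equivalence}, and the observation that the budget of size \(\sigma\) should be spent almost entirely in whichever coordinate direction carries the large level gap. The organisation, however, is genuinely different. The paper's proof chooses the weight vector \(\tbfm\) \emph{adaptively for each pair \((\jbfm,\jbfm')\)}, placing the entire mass \(t\) on the coordinate that realises \(|\jbfm-\jbfm'|_\infty\); this yields a single scalar decay factor \(2^{-t|\jbfm-\jbfm'|_\infty}\) and the conclusion then follows from a Schur test on the reduced matrix \(\bar{\Abfm}_r\) indexed by levels only, where the row and column sums are estimated by \(\sum_{m>M} m\,2^{-tm}\). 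You instead decompose \(\Lbfm-\Lbfm_r^{(0)}\) into constant-shift blocks \(\Lbfm^{(k_1,k_2)}\), estimate by the crude triangle inequality \(\|\Lbfm-\Lbfm_r^{(0)}\|\le\sum_{\max\{|k_1|,|k_2|\}>M}\|\Lbfm^{(k_1,k_2)}\|\), and within each of the two sub-sums fix a uniform weight vector \(\sbfm\), spending \(s\) on the constrained direction and an arbitrarily small \(\epsilon>0\) on the other purely so that the unconstrained geometric series converges. Both routes lead to the same decay rate and the same parameter bookkeeping in the final step, namely that the effective budget \(\min\{\sigma,\gamma-q\}\), amplified by the factor \(\alpha(d-q)/(\gamma-q)>1\) coming from the cutoff width \(M\), recovers \(\min\{\sigma,\alpha(d-q)\}\). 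The paper's approach avoids the \(\epsilon\)-bookkeeping and the splitting into sub-sums; yours avoids Schur's lemma in favour of a purely elementary triangle inequality and geometric series, at the cost of a constant that blows up as \(\epsilon\to0\), which is harmless since for each fixed target rate \(s'\) one can keep \(\epsilon\) fixed. One small point worth spelling out in your write-up: the constraint on \(\sbfm\) is not only \(|\sbfm|_1<\sigma\) but also, as in the paper, that each component of \(q+\sbfm\) lies in the validity range of the norm equivalences, which is where the additional cap \(s<\gamma-q\) enters; you mention it only implicitly at the very end.
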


\begin{proof}
	Let \(s < \min\{\sigma,\, \alpha \bar{s} \} \), choose
	\(\frac{\nu}{\alpha \bar{s}} s < t < \min\{\sigma,\,
	\nu\}\),
	and consider \(\ubfm, \vbfm \in \ell^2(\nablabfm)\).
	We will show that
	\begin{equation*}
		\big|\langle\ubfm, \, (\Lbfm - \Lbfm_r^{(0)})\vbfm\rangle_{\ell^2(\nablabfm)} \big|
		\lesssim 2^{-sr} \|\ubfm\|_{\ell^2(\nablabfm)} \|\vbfm\|_{\ell^2(\nablabfm)}.
	\end{equation*}
	To this end, let us denote
	\begin{equation*}
		u_{\jbfm} = \sum_{\lambdabfm\in\nablabfm_{\jbfm}} u_{\lambdabfm} \psi_{\lambdabfm},
		\quad v_{\jbfm'} = \sum_{\lambdabfm'\in\nablabfm_{\jbfm'}} 
		v_{\lambdabfm'} \psi_{\lambdabfm'}.
	\end{equation*}

	Depending on \(\lambdabfm\) and \(\lambdabfm'\), let us define
	\begin{equation*}
		\tbfm \isdef (\tx,\ty) \isdef
		\begin{cases}
			(t \Sign(\jx' -\jx),\ 0), &|\jbfm -\jbfm'|_{\infty} = |\jx -\jx'|,\\
			(0,\ t \Sign(\jy' -\jy)), &|\jbfm -\jbfm'|_{\infty} = |\jy -\jy'|.
		\end{cases}
	\end{equation*}
	If \(\tbfm \leq \zerobfm\),
	then \(\Lcal \colon \Hfrak^{q,-\tbfm}(\square) \to \Hfrak^{-q, -\tbfm}(\square)\) 
	is continuous
	due to Assumption \ref{ass:continuity_of_L}
	since \(|\tbfm|_{1} < \sigma\), 
	so
	\begin{align*}
		\big| \langle \Lcal v_{\jbfm'}, \, u_{\jbfm} \rangle_{\square} \big|
		&\leq \|\Lcal v_{\jbfm'}\|_{\Hfrak^{-q, -\tbfm}(\square)}
		\|u_{\jbfm}\|_{\Hfrak^{q, \tbfm}(\square)}
		\lesssim \|v_{\jbfm'}\|_{\Hfrak^{q, -\tbfm}(\square)}
		\|u_{\jbfm}\|_{\Hfrak^{q, \tbfm}(\square)} \\
		&\lesssim 2^{\tx (\jx - \jx') + \ty (\jy - \jy')} 
		\|v_{\jbfm'}\|_{H^q(\square)} \|u_{\jbfm}\|_{H^q(\square)}.
	\end{align*}
	Herein, we have used the norm equivalence \eqref{eq:norm_equivalence_GK}
	since \(t < \nu(\gamma, q)\) implies that \(t, q+t <
	\gamma\).
	If \(\tbfm \geq \zerobfm\), on the other hand, we may apply the above
	argument to \(\norms{\groupa{v_{\jbfm'}, 
	\Lcal' u_{\jbfm}}_{\square}}\).
	In any case, we find that
	\begin{equation*}
		\big| \langle \Lcal v_{\jbfm'}, \, u_{\jbfm} \rangle_{\square} \big|
		\lesssim 2^{-t |\jbfm - \jbfm'|_{\infty}}
		\|\vbfm\|_{\ell^2(\nablabfm_{\jbfm'})} \|\ubfm\|_{\ell^2(\nablabfm_{\jbfm})}.
	\end{equation*}
	Hence, by summing over \(\jbfm\) and \(\jbfm'\), we obtain that
	\begin{align*}
		\big| \langle \ubfm, \, (\Lbfm - \Lbfm_r^{(0)}) 
		\vbfm \rangle_{\ell^2(\nablabfm)} \big|
		&\leq \sum_{|\jbfm - \jbfm'|_{_\infty} > \alpha r
		\frac{\bar{s}}{\nu}} 
		\big| \langle \Lcal v_{\jbfm'}, \, u_{\jbfm} \rangle_{\square} \big| \\
		&\lesssim \sum_{|\jbfm - \jbfm'|_{_\infty} > \alpha r
		\frac{\bar{s}}{\nu}} 
		2^{-t|\jbfm - \jbfm'|_{\infty}}
		\|\vbfm\|_{\ell^2(\nablabfm_{\jbfm'})} \|\ubfm\|_{\ell^2(\nablabfm_{\jbfm})} \\
		&= \bar{\vbfm}^{\intercal} \bar{\Abfm}_r \bar{\ubfm},
	\end{align*}
	where we denote \(\bar{\ubfm} = [ \|\ubfm\|_{\ell^2(\nablabfm_{\jbfm})}]
	_{\jbfm \in \Jcal}\) 
	and likewise \(\bar{\vbfm}\) with 
	\(\Jcal \isdef \{ \jbfm : \jbfm \geq \jbfm_0\}\), and
	\begin{equation*}
		\bar{\Abfm}_r = \big[ 2^{-t |\jbfm - \jbfm'|_{\infty}} 
			\mathds{1}_{|\jbfm - \jbfm'|_{_\infty} > \alpha r
		\frac{\bar{s}}{\nu}} \big]_{\jbfm, \jbfm' \in\Jcal}.
	\end{equation*}

	To estimate the spectral norm of \(\bar{\Abfm}_r\), 
	we remark that for any fixed \(\jbfm' \geq \jbfm_0\),
	the \(\jbfm'\)-th column sum can be estimated by
	\begin{align*}
		\sum_{|\jbfm - \jbfm'|_{_\infty} > \alpha r
		\frac{\bar{s}}{\nu}} 
		2^{-t|\jbfm - \jbfm'|_{\infty}} 
		&= \sum_{m > \alpha r \frac{\bar{s}}{\nu}}^{\infty} 
		2^{-tm} \big| \{ \jbfm \geq \jbfm_0 : |\jbfm - \jbfm'|_\infty = m \} \big|\\
		&\lesssim \sum_{m > \alpha r \frac{\bar{s}}{\nu}}^{\infty} 
		m 2^{-tm}\\
		&\lesssim 2^{-sr},
	\end{align*}
	as \(\alpha \frac{\bar{s}}{\nu} t > s\).
	With exactly the same argument, we can also estimate the row 
	sums of \(\bar{\Abfm}_r\), so by Schur's lemma, see for example 
	\cite{Sch98}, we have
	\begin{equation*}
		\|\bar{\Abfm}_r\|_2 
		\lesssim \|\bar{\Abfm}_r\|_1^{\frac{1}{2}} \|\bar{\Abfm}_r\|_{\infty}^{\frac{1}{2}}
		\lesssim 2^{-sr}.
	\end{equation*}
	Hence, there holds
	\begin{align*}
		\big| \langle \ubfm, \, (\Lbfm - \Lbfm_r^{(0)}) 
		\vbfm \rangle_{\ell^2(\nablabfm)} \big|
		&\lesssim \|\bar{\vbfm}\|_{\ell^2(\Jcal)} 
		\|\bar{\Abfm}_r\|_2 \|\bar{\ubfm}\|_{\ell^2(\Jcal)} 
		\lesssim 2^{-sr} \|\vbfm\|_{\ell^2(\nablabfm)} \|\ubfm\|_{\ell^2(\nablabfm)}.
	\end{align*}
\end{proof}

\begin{remark}
	As we will see, for the following error and complexity estimates,
	the condition \(|\jbfm -\jbfm'|_\infty > \alpha r \frac{\bar{s}}{\nu}\) 
	is not strictly necessary for leaving the desired number of
	remaining matrix entries.
	The only thing we require is that for any multiindex \(\lambdabfm \in
	\nablabfm_{\jbfm}\), 
	\begin{equation*}
		\left|\big\{\jbfm' : \exists \lambdabfm' \in \nablabfm_{\jbfm'}
			\text{ such that } \big[\ell_r^{(0)}\big]_{\lambdabfm,\lambdabfm'}
		\neq 0 \big\}\right| \lesssim r^{m}.
	\end{equation*}
	This condition is satisfied for m = 2 and any fixed \(\alpha > 0\).
\end{remark}

\subsection{Far-Field: First Compression}
\label{sec:first_compression}

In a second step, we want to drop the entries in \(\Lbfm\), 
for which the supports of the included wavelets
are located sufficiently far away from each other.
The corresponding cutoff parameter will 
be denoted by \(\Bcal_{\lambdabfm,\lambdabfm'}\).
Since this parameter does actually only
depend on the levels \(\jbfm\) and \(\jbfm'\)
of \(\lambdabfm\) and \(\lambdabfm'\), respectively,
we will also use the notation \(\Bcal_{\jbfm,\jbfm'}\)
if this is more feasible in the respective context.

\begin{theorem}
	For \(\nicefrac{1}{2}<\xi<1\), let us define
	\begin{equation}
		\label{eq:cutoff_parameter_first_general}
		\Bcal_{\lambdabfm,\lambdabfm'} \isdef
		\max\left\{2^{-\minm(\lambdabfm,\lambdabfm')}, \
			2^{-\frac{1}{2}(\mx(\lambdabfm,\lambdabfm')+\my(\lambdabfm,\lambdabfm'))}
		2^{\xi(\frac{r}{2} -|\lambdabfm-\lambdabfm'|_{\infty})}\right\}
	\end{equation}
	and \(\Lbfm_r^{(1)}\) as the infinite matrix with the entries
	\begin{equation}
		\label{eq:first_compression_scheme}
		[\ell_r^{(1)}]_{\lambdabfm,\lambdabfm'} \isdef
		\begin{cases}
			0, &\text{if } [\ell_r^{(0)}]_{\lambdabfm,\lambdabfm'}=0,\\
			0, &\text{if } \delta(\lambdabfm,\lambdabfm') \geq
			\Bcal_{\lambdabfm,\lambdabfm'},\\
			\langle\Lcal\psi_{\lambdabfm'},\psi_{\lambdabfm}\rangle_{\square}, &\text{else}.
		\end{cases}
	\end{equation}
	Then, there holds \(\|\Lbfm_r^{(0)}-\Lbfm_r^{(1)}\| \lesssim 
	r 2^{-\frac{\tilde{d} + q}{2} r}\).
	\label{thm:first_compression_error}
\end{theorem}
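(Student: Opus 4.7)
The plan is to bound $\|\Lbfm_r^{(0)}-\Lbfm_r^{(1)}\|$ by Schur's lemma applied to the difference matrix, whose nonzero entries correspond to pairs $(\lambdabfm,\lambdabfm')$ with $|\jbfm-\jbfm'|_\infty\leq\alpha\frac{d-q}{\gamma-q}r$ and $\delta(\lambdabfm,\lambdabfm')\geq\Bcal_{\lambdabfm,\lambdabfm'}$. Since the supports are then disjoint, I apply the strong cancellation bound \eqref{eq:cancellation_property_two_vanmom}, choosing $j^{(1)},j^{(2)}$ as the two largest entries of $\{\jx,\jy,\jx',\jy'\}$. Denoting their sum by $S$ (so $S\geq |\jbfm|_\infty+|\jbfm'|_\infty\geq \mx+\my$), this yields
\[
|\langle\Lcal\psi_{\lambdabfm'},\psi_{\lambdabfm}\rangle|\lesssim 2^{-\frac{1}{2}(|\jbfm|_1+|\jbfm'|_1)}\,2^{-\tilde{d}S}\,2^{-q(|\jbfm|_\infty+|\jbfm'|_\infty)}\,\delta(\lambdabfm,\lambdabfm')^{-(2+2q+2\tilde{d})}.
\]

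For a fixed row $\lambdabfm\in\nablabfm_\jbfm$ and fixed $\jbfm'$, the key counting observation is that $\Bcal_{\lambdabfm,\lambdabfm'}\geq 2^{-\minm(\lambdabfm,\lambdabfm')}$ dominates every side length of both $\Omega_{\lambdabfm}$ and $\Omega_{\lambdabfm'}$. Hence, the dyadic annulus of points at distance in $[2^\ell\Bcal,2^{\ell+1}\Bcal]$ from $\Omega_{\lambdabfm}$ has area $\lesssim(2^\ell\Bcal)^2$ and therefore contains at most $\lesssim(2^\ell\Bcal)^2\cdot 2^{|\jbfm'|_1}$ wavelets at level $\jbfm'$. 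Inserting the entry bound and summing the geometric series in $\ell$ (convergent because $q+\tilde{d}>0$) gives the per-level row sum
\[
\sum_{\lambdabfm'\in\nablabfm_{\jbfm'},\,\delta\geq\Bcal}|\langle\Lcal\psi_{\lambdabfm'},\psi_{\lambdabfm}\rangle|\lesssim 2^{\frac{1}{2}(|\jbfm'|_1-|\jbfm|_1)}\,2^{-\tilde{d}S}\,2^{-q(|\jbfm|_\infty+|\jbfm'|_\infty)}\,\Bcal^{-(2q+2\tilde{d})}.
\]
Substituting the second branch of \eqref{eq:cutoff_parameter_first_general} into $\Bcal^{-(2q+2\tilde{d})}$ extracts the decay factor $2^{-\xi(q+\tilde{d})r}\cdot 2^{2\xi(q+\tilde{d})|\jbfm-\jbfm'|_\infty}\cdot 2^{(q+\tilde{d})(\mx+\my)}$. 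In the first-branch regime, $\Bcal=2^{-\minm}$, the defining inequality forces $|\jbfm-\jbfm'|_\infty$ to be large relative to $r$, so the required exponential decay in $r$ follows directly from this level separation.

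To conclude, I sum over the $\Ocal(r^2)$ admissible $\jbfm'$. Writing $\Delta\isdef|\jbfm-\jbfm'|_\infty$, the aggregate level-dependent prefactor
\[
F(\jbfm,\jbfm')\isdef 2^{\frac{1}{2}(|\jbfm'|_1-|\jbfm|_1)}\cdot 2^{q[\mx+\my-(|\jbfm|_\infty+|\jbfm'|_\infty)]}\cdot 2^{\tilde{d}[\mx+\my-S]}\cdot 2^{2\xi(q+\tilde{d})\Delta}
\]
has its middle two factors bounded by $1$ (using the inequalities established above), its first factor bounded by $2^\Delta$, and its fourth factor growing geometrically in $\Delta$. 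A case analysis on the orderings of $\jx,\jy,\jx',\jy'$ shows that $\sum_{\jbfm'\,:\,\Delta\leq Kr} F(\jbfm,\jbfm')\lesssim r$ provided $\tilde{d}$ is sufficiently large relative to $\xi$ (a standard hypothesis in such compressibility arguments; the worst case $\jbfm'=\jbfm+(\Delta,\Delta)$ requires $\tilde{d}\gtrsim \frac{1+q(2\xi-1)}{2(1-\xi)}$). Together with the decay $2^{-\xi(q+\tilde{d})r}\leq 2^{-(q+\tilde{d})r/2}$ (since $\xi>\frac{1}{2}$), this yields the desired row-sum bound; the column sum obeys the same estimate by symmetry of the entry bound in $\lambdabfm\leftrightarrow\lambdabfm'$, and Schur's lemma completes the proof. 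The main obstacle is precisely this case analysis: balancing the growth factor $2^{\frac{1}{2}(|\jbfm'|_1-|\jbfm|_1)}$ from the annulus counting against the smallness $2^{\tilde{d}[\mx+\my-S]}$ arising from using vanishing moments at the two largest of the four levels, and verifying uniformly across configurations that the aggregate does not offset the $r$-decay extracted from $\Bcal^{-(2q+2\tilde{d})}$.
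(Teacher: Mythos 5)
Your entry bound, annulus counting, and the reduction to a per--level row sum with the factor $2^{\frac{1}{2}(|\jbfm'|_1-|\jbfm|_1)}\,\Bcal^{-(2q+2\tilde d)}$ agree with the paper, and you treat both branches of the cutoff \eqref{eq:cutoff_parameter_first_general} in the correct way. The genuine gap is in how you dispose of the factor $2^{\frac{1}{2}(|\jbfm'|_1-|\jbfm|_1)}$. You dominate it by $2^{|\jbfm-\jbfm'|_\infty}$ and then try to absorb the resulting geometric growth into the decay that comes from taking $j^{(1)},j^{(2)}$ to be the two largest levels; to make the sum over $\jbfm'$ close, you are forced to impose $\tilde d\gtrsim\frac{1+q(2\xi-1)}{2(1-\xi)}$, a hypothesis that is \emph{not} part of the theorem (which only assumes $\tfrac12<\xi<1$ and, implicitly, $\tilde d+q\ge0$). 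The paper's proof avoids this by invoking Schur's test with the weight $w_{\lambdabfm}=2^{-\frac{1}{2}|\lambdabfm|_1}$, i.e.\ it conjugates $\Lbfm_r^{(0)}-\Lbfm_r^{(1)}$ by $\operatorname{diag}\bigl(2^{\frac{1}{2}|\lambdabfm|_1}\bigr)$, which leaves the spectral norm unchanged. The Schur quotient $w_{\lambdabfm'}/w_{\lambdabfm}=2^{\frac{1}{2}(|\jbfm|_1-|\jbfm'|_1)}$ \emph{exactly cancels} the annulus-count factor; using $|\jbfm|_\infty+|\jbfm'|_\infty-\mx(\jbfm,\jbfm')-\my(\jbfm,\jbfm')\ge|\jbfm-\jbfm'|_\infty$ together with $\tilde d+q\ge0$, the weighted per-level term collapses to $2^{(2\xi-1)(q+\tilde d)|\jbfm-\jbfm'|_\infty}\cdot 2^{-\xi(q+\tilde d)r}$, and the sum over $|\jbfm-\jbfm'|_\infty\le\frac r2$ then yields $r\,2^{-\frac{\tilde d+q}{2}r}$ with no side condition on $\tilde d$.

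Unweighted Schur (invoked in your closing sentence by appealing to the $\lambdabfm\leftrightarrow\lambdabfm'$ symmetry) cannot recover this: the factor $2^{\frac{1}{2}(|\jbfm'|_1-|\jbfm|_1)}$ is unbounded over the rows with $|\jbfm|_1$ small, and its "mirror" $2^{\frac{1}{2}(|\jbfm|_1-|\jbfm'|_1)}$ in the column sums is large on a disjoint set of indices, so nothing cancels when you maximise row and column sums independently. A secondary issue: for $q<0$ (which you must cover, e.g.\ the single-layer case $2q=-1$), your factor $2^{q[\mx(\jbfm,\jbfm')+\my(\jbfm,\jbfm')-(|\jbfm|_\infty+|\jbfm'|_\infty)]}$ exceeds $1$, so the claim that the "middle two factors are bounded by~$1$" is false; the paper avoids this by keeping $q$ and $\tilde d$ together in the single exponent $2^{-(\tilde d+q)[|\jbfm|_\infty+|\jbfm'|_\infty-\mx(\jbfm,\jbfm')-\my(\jbfm,\jbfm')]}$ and only ever using $\tilde d+q\ge0$.
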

\begin{proof}
	Let us fix some \(\jbfm \geq \jbfm_0\) and \(\lambdabfm\in\nablabfm_{\jbfm}\).
	In what follows, we are going to use
	\eqref{eq:cancellation_property_two_vanmom} with \((j^{(1)}, j^{(2)}) =
	(\norms{\jbfm}_{\infty}, \norms{\jbfm'}_{\infty})\).
	Since \(\Bcal_{\lambdabfm,\lambdabfm'}\gtrsim 2^{-\minm(\lambdabfm,\lambdabfm')}\),
	we may follow the arguments of \cite{HvR24,Rei08} to obtain that
	the \(\lambdabfm\)-th row sum is bounded by
	\begin{align}
		\notag
		&\sum_{\lambdabfm'\in\nablabfm} 
		\big|[\ell_r^{(0)}-\ell_r^{(1)}]_{\lambdabfm,\lambdabfm'}\big|\\
		\notag
		&\qquad \lesssim 
		\sum_{|\jbfm - \jbfm'|_{_\infty} \leq \alpha r
		\frac{\bar{s}}{\nu}} 
		\sum_{\substack{\lambdabfm'\in\nabla_{\jbfm'}\\
		\delta(\lambdabfm,\lambdabfm') \geq \Bcal_{\jbfm,\jbfm'}}}
		2^{-\frac{1}{2}(|\lambdabfm|_1+|\lambdabfm'|_1)}
		2^{-(\tilde{d} + q)(|\lambdabfm|_\infty+|\lambdabfm'|_\infty)}
		\delta(\lambdabfm,\lambdabfm')^{-(2+2q+2\tilde{d})}\\
		\notag
		&\qquad \lesssim \sum_{|\jbfm - \jbfm'|_{_\infty} \leq \alpha r
		\frac{\bar{s}}{\nu}}
		2^{\frac{1}{2}(|\jbfm'|_1-|\jbfm|_1)}
		2^{-(\tilde{d} +  q)(|\jbfm|_\infty+|\jbfm'|_\infty)}
		\int_{\Bcal_{\jbfm,\jbfm'}}^{\infty} t^{-(1+2q+2\tilde{d})}\dint t\\
		\label{eq:maximum_to_bound_thm_42}
		&\qquad \lesssim \sum_{|\jbfm - \jbfm'|_{_\infty} \leq \alpha r
		\frac{\bar{s}}{\nu}}
		2^{\frac{1}{2}(|\jbfm'|_1-|\jbfm|_1)}
		2^{-(\tilde{d} +  q)(|\jbfm|_\infty+|\jbfm'|_\infty)}
		\Bcal_{\jbfm,\jbfm'}^{-(2q+2\tilde{d})}.
	\end{align}

	Next, we are going to distinguish between two cases.
	If \(|\jbfm -\jbfm'|_{\infty} \leq \nicefrac{r}{2}\), 
	then \(\Bcal_{\jbfm,\jbfm'}\) in
	\eqref{eq:maximum_to_bound_thm_42} can be bounded by
	\(2^{-\frac{1}{2}(\mx(\jbfm,\jbfm') + \my(\jbfm,\jbfm'))} 
	2^{\xi(\frac{r}{2} -|\jbfm-\jbfm'|_{\infty})}\), 
	so since 
	\begin{align*}
		\tilde{d} + q \geq 0, \qquad 
		|\jbfm|_{\infty} + |\jbfm'|_{\infty} - \mx(\jbfm,\jbfm') - \my(\jbfm,\jbfm') 
		\geq |\jbfm - \jbfm'|_{\infty},
	\end{align*}
	we have
	\begin{align*}
		\sum_{\substack{\lambdabfm'\in\nablabfm \\ |\lambdabfm -
		\lambdabfm'|_{\infty} \leq \frac{r}{2}}} 
		2^{\frac{1}{2}(|\lambdabfm|_1-|\lambdabfm'|_1)}
		\big|[\ell_r^{(0)}-\ell_r^{(1)}]_{\lambdabfm,\lambdabfm'}\big|
		&\lesssim 2^{-\xi (q + \tilde{d})r} \sum_{|\jbfm-\jbfm'|_{\infty}\leq
		\frac{r}{2}}
		2^{(2\xi-1)(q+\tilde{d})|\jbfm-\jbfm'|_{\infty}}\\
		&\lesssim r 2^{-\frac{\tilde{d} + q}{2}r}
	\end{align*}
	as \(\xi > \nicefrac{1}{2}\).

	On the other hand, if \(|\jbfm -\jbfm|_{\infty} \geq \nicefrac{r}{2}\), 
	then \(\Bcal_{\jbfm,\jbfm'}\) in \eqref{eq:maximum_to_bound_thm_42} is
	\(2^{-\minm(\jbfm,\jbfm')}\), so again by
	\begin{align*}
		\tilde{d} + q \geq 0, \qquad 
		|\jbfm|_{\infty} + |\jbfm'|_{\infty} - 2\minm(\jbfm,\jbfm')
		\geq |\jbfm - \jbfm'|_{\infty}
	\end{align*}
	there follows
	\begin{align*}
		\sum_{\substack{\lambdabfm'\in\nablabfm \\ |\lambdabfm -
		\lambdabfm'|_{\infty} \geq \frac{r}{2}}} 
		2^{\frac{1}{2}(|\lambdabfm|_1-|\lambdabfm'|_1)}
		\big|[\ell_r^{(0)}-\ell_r^{(1)}]_{\lambdabfm,\lambdabfm'}\big|
		\lesssim \sum_{|\jbfm-\jbfm'|_{\infty} \geq \frac{r}{2}}
		2^{-(q+\tilde{d})|\jbfm-\jbfm'|_{\infty}}
		\lesssim r 2^{- \frac{\tilde{d}+q}{2}r}.
	\end{align*}

	With exactly the same arguments, we get an analogous estimate for the column sums,
	i.e., for any fixed \(\lambdabfm'\in\nablabfm\), we have
	\begin{equation*}
		\sum_{\lambdabfm\in\nablabfm}
		2^{\frac{1}{2}(|\lambdabfm'|_1-|\lambdabfm|_1)}
		\big|[\ell_r^{(0)} - \ell_r^{(1)}]_{\lambdabfm,\lambdabfm'}\big|
		\lesssim r2^{- \frac{\tilde{d} + q}{2} r}.
	\end{equation*}
	Hence, we may use Schur's lemma to conclude that
	\(\|\Lbfm_r^{(0)}-\Lbfm_r^{(1)}\| \lesssim r2^{-\frac{\tilde{d} + q}{2} r}\),
	as long as \(\nicefrac{1}{2} < \xi < 1\).
\end{proof}

\subsection{Mixed Field: Mixed Compression}
\label{sec:mixed_compression}

Compared to isotropic wavelets, 
the requirement \(\delta(\lambdabfm,\lambdabfm') 
> 2^{-\minm(\lambdabfm,\lambdabfm')}\) is rather restrictive
but strictly required by the first compression from Section
\ref{sec:first_compression}.
On the other hand, there are still a lot of wavelet interactions
of the kind where \(\delta(\lambdabfm,\lambdabfm')>0\).
In the case where at least \(\deltax(\lambdabfm,\lambdabfm')\) 
or \(\deltay(\lambdabfm,\lambdabfm')\) is sufficiently large,
we can still somehow \enquote{first compress} the entries.
However, as we will see, the number of remaining entries is too big,
so we need to combine the first compression in one direction with
the second compression in the other direction. Note that this 
idea was already introduced in \cite{Rei08}.

\begin{theorem}
	\label{thm:compression_error_mix_y}
	For \(\nicefrac{1}{2} < \theta < 1\), let us define
	\begin{align}
		\label{eq:cutoff_mixed_field_y_far}
		\Dparam{\lambdabfm}{y} \isdef 2^{-\my(\lambdabfm, \lambdabfm')}
		\max\left\{1, \ 2^{\theta(\frac{r}{2} -
		|\jy-\jy'|)}\right\}, 
	\end{align}
	and \(\Lbfm_r^{(\frac{4}{3},{\rm y})}\) as the 
	infinite matrix with the entries
	\begin{equation}
		[\ell_r^{(\frac{4}{3},{\rm y})}]_{\lambdabfm,\lambdabfm'} \isdef
		\begin{cases}
			0, &\text{if }[\ell_r^{(1)}]_{\lambdabfm,\lambdabfm'}=0,\\
			0, &\text{if }
			\begin{cases}
				\deltay(\lambdabfm,\lambdabfm')\geq\Dparam{\lambdabfm}{y},\\
				\deltax(\lambdabfm,\lambdabfm')\leq 2^{-\mx(\lambdabfm,\lambdabfm')},\\
			\end{cases}\\
			\langle\Lcal\psi_{\lambdabfm'},\psi_{\lambdabfm}\rangle, &\text{else}.
		\end{cases}
		\label{eq:compression_scheme_mix_y}
	\end{equation}
	Then, there holds \(\|\Lbfm_r^{(1)}-\Lbfm_r^{(\frac{4}{3},{\rm y})} \|_2 
	\lesssim r 2^{-\frac{\tilde{d}+q}{2} r}\).
\end{theorem}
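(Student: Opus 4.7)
The strategy is to mirror the proof of Theorem \ref{thm:first_compression_error}: bound the weighted row and column sums of the difference matrix, then apply Schur's lemma. The difference matrix $\Lbfm_r^{(1)}-\Lbfm_r^{(\frac{4}{3},{\rm y})}$ picks up precisely the pairs $(\lambdabfm,\lambdabfm')$ with $|\lambdabfm-\lambdabfm'|_\infty \leq \alpha\frac{d-q}{\gamma-q}r$, $\delta(\lambdabfm,\lambdabfm') < \Bcal_{\lambdabfm,\lambdabfm'}$, $\deltay(\lambdabfm,\lambdabfm') \geq \Dparam{\lambdabfm}{y}$, and $\deltax(\lambdabfm,\lambdabfm') \leq 2^{-\mx(\lambdabfm,\lambdabfm')}$; so in particular $\deltay > 0$ always holds on the dropped set.

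The first step is to obtain a pointwise bound on $|\langle\Lcal\psi_{\lambdabfm'},\psi_{\lambdabfm}\rangle|$ on this dropped set. When $\mx \leq \my$, Lemma \ref{lm:long_face_estimate} applies and gives an estimate with $\deltay^{-(1+2q+2\tilde d)}$ from the y-direction cancellation. In the opposite regime $\my < \mx$, one repeats the argument of Lemma \ref{lm:long_face_estimate} while using the vanishing moments of the finer wavelet in the y-direction instead of the x-direction; this still produces the factor $\deltay^{-(1+2q+2\tilde d)}$ together with $2^{-(\tilde d+q)(|\jbfm|_\infty + |\jbfm'|_\infty)}$ and the correct $|\jbfm-\jbfm'|_1$-exponent. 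In both cases we end up with a bound of the schematic form
\begin{equation*}
	|\langle\Lcal\psi_{\lambdabfm'},\psi_{\lambdabfm}\rangle| \lesssim
	2^{-\frac{1}{2}(\jy+\jy'+|\jx-\jx'|)}
	2^{-(\tilde d+q)(|\jbfm|_\infty + |\jbfm'|_\infty)}
	\deltay(\lambdabfm,\lambdabfm')^{-(1+2q+2\tilde d)}.
\end{equation*}

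The second step is to sum this bound over the dropped $\lambdabfm'\in\nablabfm_{\jbfm'}$ for a fixed $\lambdabfm$ and fixed $\jbfm'$ with $|\jbfm-\jbfm'|_\infty \leq \alpha\frac{d-q}{\gamma-q}r$. The constraint $\deltax \leq 2^{-\mx}$ confines $\lambdax'$ to $\Ocal(2^{|\jx-\jx'|})$ positions. In the y-direction one replaces the sum over $\lambday'$ by a one-dimensional integral against the bound $\deltay^{-(1+2q+2\tilde d)}$, which using $\tilde d + q > 0$ produces $\int_{\Dparam{\jbfm}{y}}^{\infty} t^{-(1+2q+2\tilde d)}\,dt \lesssim \Dparam{\jbfm}{y}^{-(2q+2\tilde d)}$. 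After multiplying by the Schur weights $2^{\frac12(|\lambdabfm|_1-|\lambdabfm'|_1)}$ and collecting the $2^{-(\tilde d+q)(\cdots)}$ factors, the weighted row sum reduces to a sum over $\jbfm'$ of terms of the form $2^{-(\tilde d+q)|\jbfm-\jbfm'|_\infty}\,2^{\my(2q+2\tilde d)(\cdot)}\Dparam{\jbfm}{y}^{-(2q+2\tilde d)}$.

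The third step is to split this sum over $\jbfm'$ into the two regimes used in Theorem \ref{thm:first_compression_error}: when $|\jy-\jy'|\leq r/2$ one has $\Dparam{\jbfm}{y}=2^{-\my}2^{\theta(r/2-|\jy-\jy'|)}$, and when $|\jy-\jy'|>r/2$ one has $\Dparam{\jbfm}{y}=2^{-\my}$. Using $\theta>\frac12$ in the first case and the straight geometric decay in the second, each sum is bounded by $r\,2^{-\frac{\tilde d+q}{2}r}$, exactly as in the proof of Theorem \ref{thm:first_compression_error}. The column sum is entirely symmetric, and Schur's lemma delivers the claim. The main obstacles are (i) supplying the $\my<\mx$ companion of Lemma \ref{lm:long_face_estimate}, and (ii) making the counting argument that replaces the discrete sum over $\lambday'$ with the integral in $\deltay$ rigorous, since the set $\{\lambday'\in\nabla_{\jy'}:\deltay\in[t,t+dt]\}$ must be carefully bounded by $\Ocal(2^{\My}\,dt)$.
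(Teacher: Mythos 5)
Your strategy — bound the weighted row and column sums of the difference and close with Schur's lemma — is exactly the paper's, and your Steps~2 and~3 (integrating the $\deltay$-decay, counting $\Ocal(2^{|\jx-\jx'|})$ positions for $\lambdax'$, and splitting on $|\jy-\jy'|\lessgtr r/2$) reproduce the paper's computation. The genuine gap is your Step~1.

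The obstacle you flag, supplying a ``$\my<\mx$ companion'' of Lemma~\ref{lm:long_face_estimate}, does not need to be overcome: the paper opens its proof with the observation that this regime simply does not occur among the dropped entries. If $\my(\lambdabfm,\lambdabfm')<\mx(\lambdabfm,\lambdabfm')$, then $\my=\minm$, so $\delta(\lambdabfm,\lambdabfm')\ge\deltay(\lambdabfm,\lambdabfm')\ge\Dparam{\lambdabfm}{y}\ge 2^{-\my}=2^{-\minm}$, and hence the entry is already dropped in $\Lbfm_r^{(1)}$ and never appears in $\Lbfm_r^{(1)}-\Lbfm_r^{(\frac43,{\rm y})}$. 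With this observation, Lemma~\ref{lm:long_face_estimate} alone covers every dropped entry and no new pointwise bound is needed.

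Moreover, the companion estimate you assert (same schematic form with $2^{-(\tilde d+q)(|\jbfm|_\infty+|\jbfm'|_\infty)}$ and $\deltay^{-(1+2q+2\tilde d)}$, but ``using the vanishing moments in the y-direction instead'') is not a routine mirror of Lemma~\ref{lm:long_face_estimate}. That proof relies on $\mx\le\my$ together with the WLOG normalisation $\jx\le\jx'$ to conclude $|\jbfm|_\infty=\jy$, which is what lets the cancellation exponents $\tilde d(\jx'+\jy+\jy')$ be dominated by $\tilde d(|\jbfm|_\infty+|\jbfm'|_\infty)$. When $\my<\mx$ you could have $|\jbfm|_\infty=\jx$, the bookkeeping changes, and the stated schematic bound does not fall out of the same calculation. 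So as written your Step~1 in the $\my<\mx$ regime is asserted rather than proved; fortunately, once you add the vacuity observation at the top, the step becomes unnecessary and the rest of your argument goes through.
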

\begin{proof}
	We first remark that for any such compressed entry,
	we must have \(\my(\lambdabfm,\lambdabfm')\geq \mx(\lambdabfm,\lambdabfm')\).
	If this is not the case, then we immediately have
	\begin{equation*}
		\delta(\lambdabfm,\lambdabfm') \geq 
		\deltay(\lambdabfm,\lambdabfm') \geq \Dparam{\lambdabfm}{y}\geq 
		2^{-\my(\lambdabfm,\lambdabfm')}
		= 2^{-\minm(\lambdabfm,\lambdabfm')},
	\end{equation*}
	by which this entry is already set to zero in \(\Lbfm_r^{(1)}\).
	Moreover, since
	\(\Dparam{\lambdabfm}{y}\geq 2^{-\my(\lambdabfm,\lambdabfm')}\), 
	we can use Lemma \ref{lm:long_face_estimate} 
	to estimate the weighted, \(\lambdabfm\)-th row sum by
	\begin{align}
		\notag
		&\sum_{\lambdabfm'\in\nablabfm_{\jbfm'}} 
		2^{\frac{1}{2}(|\lambdabfm|_1-|\lambdabfm'|_1)} 
		\big|[\ell_r^{(1)}]_{\lambdabfm,\lambdabfm'}
		-[\ell_r^{(\frac{4}{3}, {\rm y})}]_{\lambdabfm,\lambdabfm'}\big|\\
		\notag
		&\qquad\lesssim 
		2^{\frac{1}{2}(\jx - \jx' - |\jx-\jx'|)}
		2^{-(\tilde{d}+q)(|\jbfm|_{\infty} + |\jbfm'|_{\infty})}
		\hspace{-1.5em}
		\sum_{\substack{\lambdabfm'\in\nablabfm_{\jbfm'}\\ 
		\deltay(\lambdabfm,\lambdabfm')\geq\Dparam{\lambdabfm}{y}}} 
		\hspace{-1.5em}
		2^{-\jy'}
		\deltay(\lambdabfm,\lambdabfm')^{-(1+2q+2\tilde{d})}.
	\end{align}
	In contrast to the proof of Theorem \ref{thm:first_compression_error},
	we can no longer estimate the sum by a bivariate integral and use polar
	coordinates.
	Instead, we note that the summands are independent of
	\(\lambdax'\),
	and  
	their number is bounded by \(\Ocal(2^{\jx' - \mx(\jbfm,\jbfm')})\)
	for fixed \(\lambday'\).
	For the summation in \(\mathrm{y}\), we can nontheless use an integral
	argument, so
	\begin{align}
		\notag
		&\sum_{\lambdabfm'\in\nablabfm_{\jbfm'}} 
		2^{\frac{1}{2}(|\lambdabfm|_1-|\lambdabfm'|_1)} 
		\big|[\ell_r^{(1)}]_{\lambdabfm,\lambdabfm'}
		-[\ell_r^{(\frac{4}{3}, {\rm y})}]_{\lambdabfm,\lambdabfm'}\big|\\
		\notag
		&\qquad \lesssim 2^{\frac{1}{2}(\jx - \jx' - |\jx-\jx'|)}
		2^{-(\tilde{d}+q)(|\jbfm|_{\infty} + |\jbfm'|_{\infty})} 
		2^{\jx'-\mx(\lambdabfm,\lambdabfm')}
		\int_{\Dparam{\lambdabfm}{y}}^{\infty} t^{-(1+2q+2\tilde{d})} \dint t\\
		\notag
		&\qquad\lesssim 
		2^{-(\tilde{d}+q)(|\jbfm|_{\infty} + |\jbfm'|_{\infty})} 		
		\big(\Dparam{\jbfm}{y}\big)^{-(2q+2\tilde{d})}\\
		\label{eq:maximum_in_thm_43}
		&\qquad\lesssim 
		2^{-(\tilde{d}+q)|\jy -\jy'|} 
		\max\left\{1, \ 2^{\theta(\frac{r}{2} -
		|\jy-\jy'|)}\right\}^{-(2q + 2\tilde{d})},
	\end{align}
	since \(\jx + \jx' = 2\mx(\jbfm,\jbfm') + \norms{\jx - \jx'}\),
	\(\tilde{d} + q \geq 0\), and 
	\(|\jbfm|_{\infty} +|\jbfm'|_{\infty} \geq \jy +\jy'\).

	We remark that the maximum in \eqref{eq:maximum_in_thm_43} is \(1\) if and
	only if
	\(|\jy - \jy'| \geq \nicefrac{r}{2}\), by which
	\begin{align*}
		\sum_{|\jy -\jy'| \geq \frac{r}{2}}
		\sum_{\lambdabfm' \in \Ical_{\lambdabfm,\jbfm'}}
		2^{\frac{1}{2}(|\jbfm|_1 - |\jbfm'|_1)}
		\left|\ell_{\lambdabfm,\lambdabfm'}\right|
		\lesssim 
		\sum_{|\jy -\jy'| \geq \frac{r}{2}}
		2^{-(\tilde{d} + q)|\jy -\jy'|}
		\lesssim r 2^{-\frac{\tilde{d} + q}{2}r}.
	\end{align*}
	Herein, \(\Ical_{\lambdabfm,\jbfm'}\) denotes the set of discarded entries
	of level \(\jbfm'\) in the \(\lambdabfm\)-th column.
	Additionally,
	in view of \(\theta > \frac{1}{2}\),
	\begin{align*}
		\sum_{|\jy -\jy'| \leq \frac{r}{2}}
		\sum_{\lambdabfm' \in \Ical_{\lambdabfm,\jbfm'}}
		2^{\frac{1}{2}(|\jbfm|_1 - |\jbfm'|_1)}
		\left|\ell_{\lambdabfm,\lambdabfm'}\right|
		&\lesssim 
		2^{-\theta(\tilde{d}+q) r} \sum_{|\jy -\jy'| \leq \frac{r}{2}}
		2^{|\jy-\jy'|(\tilde{d}+q)(2 \theta -1)}\\
		&\lesssim r 2^{-r\frac{\tilde{d} + q}{2}}.
	\end{align*}

	By deriving a similar estimate for the column sums and using Schur's lemma,
	we can conclude the statement of the theorem.
\end{proof}

Theorem \ref{thm:compression_error_mix_y} deals with wavelets,
for which the associated entries decay with the
distance of their supports in the second coordinate direction. 
If one of the involved wavelets is thin and long, 
we also need to compress some entries
along this long face, cf.\ Figure \ref{fig:support_picture}.
The requirement for a good compressibility is that
the support of the smaller wavelet with respect to the
first coordinate direction must be located sufficiently far away from the 
singular support of the long wavelet.

\begin{theorem}
	\label{thm:mixed_second_previous}
	Assume that \(|\jx -\jx'| \geq \nicefrac{r}{2}\). We define the cutoff parameter
	\begin{align}
		\label{eq:Eparam_previous}
		\Eparam{\jbfm}{x} \isdef 
		2^{-\mx(\jbfm,\jbfm')} 
		2^{\frac{r}{2} - |\jx -\jx'|},
	\end{align}
	and \(\Lbfm_r^{(\frac{5}{3},{\rm x})}\) as the 
	infinite matrix with the entries
	\begin{equation}
		\notag
		[\ell_r^{(\frac{5}{3},{\rm x})}]_{\lambdabfm,\lambdabfm'} \isdef
		\begin{cases}
			0, &\text{if }[\ell_r^{(1)}]_{\lambdabfm,\lambdabfm'}=0,\\
			0, &\text{if }
			\begin{cases}
				2^{-\mx(\lambdabfm,\lambdabfm')} \geq 
				\sigmax(\lambdabfm,\lambdabfm') \geq \Eparam{\lambdabfm}{x},\\
				2^{-\my(\lambdabfm,\lambdabfm')} \leq \deltay(\lambdabfm,\lambdabfm'),\\
			\end{cases}\\
			\langle\Lcal\psi_{\lambdabfm'},\psi_{\lambdabfm}\rangle, &\text{else}.
		\end{cases}
	\end{equation}
	Then, the error can be bounded by \(\|\Lbfm_r^{(\frac{4}{3},\mathrm{y})} - 
		\Lbfm_r^{(\frac{5}{3},\mathrm{x})}\|_2 \lesssim r^2 2^{-\frac{\tilde{d} +
	q}{2}r}\).
\end{theorem}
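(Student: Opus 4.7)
The plan is to follow the architecture of the proof of Theorem \ref{thm:compression_error_mix_y}, now exploiting the refined estimate of Lemma \ref{lm:mixed_field_both_far}, which provides simultaneous cancellation in \(\sigmax\) (the near field in \({\rm x}\)) and \(\deltay\) (the far field in \({\rm y}\)). Every entry compressed in passing from \(\Lbfm_r^{(1)}\) to \(\Lbfm_r^{(\frac{5}{3},{\rm x})}\) satisfies the hypotheses of that lemma, namely \(0 < \Eparam{\lambdabfm}{x} \leq \sigmax(\lambdabfm,\lambdabfm') \leq 2^{-\mx(\lambdabfm,\lambdabfm')}\) and \(\deltay(\lambdabfm,\lambdabfm') \geq 2^{-\my(\lambdabfm,\lambdabfm')} > 0\), so the pointwise bound
\[
\big|[\ell_r^{(1)}-\ell_r^{(\frac{5}{3},{\rm x})}]_{\lambdabfm,\lambdabfm'}\big| \lesssim 2^{-\frac{1}{2}(\jy+\jy'+|\jx-\jx'|)} 2^{-\tilde{d}(\Mx+\My)} 2^{-q(|\jbfm|_\infty+|\jbfm'|_\infty)} \deltay^{-(1+q+\tilde{d})} \sigmax^{-(q+\tilde{d})}
\]
applies verbatim.

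Fixing a row index \(\lambdabfm\) on level \(\jbfm\), I would bound the weighted row sum by splitting it into an outer level sum over \(\jbfm'\) and an inner in-level sum. For fixed \(\jbfm'\), the admissible \(\lambday'\) populate the dyadic annuli \(\{\deltay \sim t\}\) for \(t \in [2^{-\my}, \Ocal(1)]\), each containing \(\Ocal(2^{\jy'})\) translates per unit length; analogously for \(\lambdax'\) in \(\{\sigmax \sim s\}\) with \(s \in [\Eparam{\jbfm}{x}, 2^{-\mx}]\). The in-level sum thereby reduces to the two one-dimensional integrals
\[
\int_{2^{-\my}}^{\infty} t^{-(1+q+\tilde{d})}\,dt \lesssim 2^{\my(q+\tilde{d})}, \qquad \int_{\Eparam{\jbfm}{x}}^{2^{-\mx}} s^{-(q+\tilde{d})}\,ds \lesssim \Eparam{\jbfm}{x}^{-(q+\tilde{d}-1)},
\]
under the standing hypothesis \(q+\tilde{d} > 1\). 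Plugging in the explicit value \(\Eparam{\jbfm}{x} = 2^{-\mx}2^{r/2-|\jx-\jx'|}\) produces a residual factor \(2^{(q+\tilde{d}-1)(|\jx-\jx'|-r/2)}\), and after combining with the weighting \(2^{\frac{1}{2}(|\jbfm|_1-|\jbfm'|_1)}\) and the translate-counting factors \(2^{\jx'+\jy'}\), the level exponents in \(\jx',\jy',\Mx,\My,\mx,\my\) should telescope into an expression that decays geometrically in at least one of \(|\jx-\jx'|, |\jy-\jy'|\) while being at worst uniformly bounded in the other.

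Next, I would sum over admissible levels \(\jbfm'\), constrained by the theorem hypothesis \(|\jx-\jx'| \geq r/2\) and by the diagonal cutoff \(|\jbfm-\jbfm'|_\infty \leq \alpha\frac{d-q}{\gamma-q}r\). Hence both \(|\jx-\jx'|\) and \(|\jy-\jy'|\) range over \(\Ocal(r)\) integers; the sum in the geometric direction yields a factor \(2^{-\frac{\tilde{d}+q}{2}r}\), while the sum in the remaining direction contributes at most a linear factor \(r\). Treating both orderings \(|\jx-\jx'|\gtreqless|\jy-\jy'|\) separately produces two such factors, accounting for the announced \(r^2\). An entirely symmetric argument, interchanging the roles of \(\lambdabfm\) and \(\lambdabfm'\), handles the weighted column sum, and Schur's lemma then converts the row- and column-sum bounds into the spectral-norm bound \(\|\Lbfm_r^{(1)}-\Lbfm_r^{(\frac{5}{3},{\rm x})}\|_2 \lesssim r^2 \, 2^{-\frac{\tilde{d}+q}{2}r}\).

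The main obstacle will be the careful bookkeeping of level exponents when combining the \(\sigmax\)- and \(\deltay\)-integrals with the \(H^q\)-scaling factors: the maxima and minima \(\Mx,\mx,\My,\my\) must be related back to \(\jx,\jx',\jy,\jy'\) according to which coordinate attains each extreme, and the asymmetric appearance of \(|\jbfm|_\infty\) versus \(|\jbfm'|_\infty\) requires a case distinction on which of the four levels is the global maximum. A secondary subtlety concerns the degenerate regime \(q+\tilde{d}=1\) in the \(\sigmax\)-integral, where the power law becomes a logarithm; this boundary case is excluded by the standing requirement that \(\tilde{d}\) be large enough to deliver \(s^\star > d-q\).
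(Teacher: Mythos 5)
Your overall architecture — fix a row, bound the weighted in-level sum via annulus counting, then sum over the \(\Ocal(r^2)\) admissible levels \(\jbfm'\), and finish with Schur's lemma — matches the paper's. The integral bookkeeping in \(\sigmax\) and \(\deltay\) is also fine; the paper bounds the number of translates by \(2^{\jx'-\mx}\) and plugs in the worst-case \(\sigmax = \Eparam{\jbfm}{x}\), which gives \(2^{\jx'-\mx}\Eparam{\jbfm}{x}^{-(q+\tilde d)}\), and your dyadic integral yields \(2^{\jx'}\Eparam{\jbfm}{x}^{1-(q+\tilde d)}\), which is at most that since \(2^{\jx'}\Eparam{\jbfm}{x} = 2^{\jx'-\mx}2^{\frac r2-|\jx-\jx'|} \leq 2^{\jx'-\mx}\). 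So that part is sound.

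The gap is the choice of the pointwise entry estimate. You invoke Lemma~\ref{lm:mixed_field_both_far} ``verbatim'', whose decay factor is \(2^{-\tilde d(\Mx+\My)}\,2^{-q(|\jbfm|_\infty+|\jbfm'|_\infty)}\). The proof actually needs Corollary~\ref{cor:mixed_field_both_far_second}, whose factor is the strictly stronger \(2^{-(\tilde d+q)(|\jbfm|_\infty+|\jbfm'|_\infty)}\). These differ by a factor \(2^{\tilde d(|\jbfm|_\infty+|\jbfm'|_\infty-\Mx-\My)}\), which can be arbitrarily large in the regime actually occurring here: all non-trivially compressed entries satisfy \(\mx \le \my\) (otherwise \(\deltay\geq 2^{-\my}=2^{-\minm}\) already triggers \(\Lbfm_r^{(1)}\)), and under this constraint one can have \(|\jbfm'|_\infty=\jy' > \jx'\) with \(\jy'-\jx'\) unboundedly large. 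In such cases the Lemma's bound loses a factor \(2^{\tilde d(\jy'-\jx')}\) relative to the Corollary, and after carrying out your telescoping one arrives (for a single \(\jbfm'\)) at something like \(2^{-\tilde d(|\jx-\jx'|+|\jy-\jy'|)}\,2^{-q(|\jbfm|_\infty+|\jbfm'|_\infty-\mx-\my)}\,2^{(q+\tilde d-1)(|\jx-\jx'|-\frac r2)}\). The \(\tilde d\) part only sees the level \emph{differences}, which are \(\Ocal(r)\), whereas the quantity \(|\jbfm|_\infty+|\jbfm'|_\infty-\mx-\my\) is not bounded by a multiple of \(r\); so for \(q<0\) this product exceeds \(2^{-\frac{\tilde d+q}{2}r}\) by an amount that is not controlled, and no subsequent summation over \(\jbfm'\) repairs it. Concretely, with \(\jbfm=(1,101)\), \(\jbfm'=(51,102)\), \(r=100\), \(q=-\frac12\), your bound is \(2^{-51\tilde d+50.5}\) while the target is \(2^{-50\tilde d+25}\); the ratio \(2^{-\tilde d+25.5}\) exceeds \(1\) for any realistic \(\tilde d\).

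What is missing is exactly the additional argument hidden in the proof of Corollary~\ref{cor:mixed_field_both_far_second}: when \(|\jbfm'|_\infty=\jy'\), one cannot get the required \(\tilde d\)-decay by a pure exponent reshuffle of the Lemma's statement. Instead one must give up the vanishing-moment gain in the \(\mathrm x\)-direction and instead exploit both sets of vanishing moments in \(\mathrm y\), producing the modified one-dimensional kernel decay \(2^{-(\tilde d+\frac12)(\jy+\jy')}\deltay^{-(1+q+\tilde d)}|x-x'|^{-(1+q+\tilde d+\alpha+\alpha')}\) rather than only \(2^{-\tilde d\My}\). This is a genuinely different estimate (a new use of the one-dimensional near-field argument of \cite{DHS06,Rei08}), not an algebraic consequence of Lemma~\ref{lm:mixed_field_both_far}. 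Your own ``main obstacle'' paragraph correctly flags this asymmetry between \(\Mx,\My\) and \(|\jbfm|_\infty,|\jbfm'|_\infty\), but your proposal then proceeds as if the telescoping must close; it does not, with the Lemma alone.
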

\begin{proof}
	For fixed \(\lambdabfm \in \nablabfm_{\jbfm}\) and \(\jbfm' \geq \jbfm_0\),
	let us denote the set of dropped entries in the \(\lambdabfm\)-th row or
	column by
	\(\Ical_{\lambdabfm,\jbfm'}\).
	We use Corollary \ref{cor:mixed_field_both_far_second} to bound
	\begin{equation*}
		\norms[\big]{\ell_{\lambdabfm,\lambdabfm'}}
		\lesssim 2^{-\frac{1}{2}(\jy + \jy' + \norms{\jx - \jx'})}
		2^{-(\tilde{d} + q)(\norms{\jbfm}_{\infty} + \norms{\jbfm'}_{\infty})}
		\deltay(\lambdabfm,\lambdabfm')^{-(1 + q + \tilde{d})}
		\groupp[\big]{\Eparam{\jbfm}{x}}^{-(q + \tilde{d})}.
	\end{equation*}
	Similar to the proof of Theorem \ref{thm:mixed_second_previous},
	we can conclude that
	\begin{align*}
		\sum_{\lambdabfm' \in \Ical_{\lambdabfm,\jbfm'}}
		2^{\frac{1}{2}(|\jbfm|_1 - |\jbfm'|_1)} \left|
		\ell_{\lambdabfm,\lambdabfm'} \right| 
		&\lesssim
		2^{\frac{1}{2}(|\jbfm|_1 - |\jbfm'|_1 - \jy -\jy' - |\jx
		-\jx'|)}
		2^{-(\tilde{d}+q)(|\jbfm|_{\infty} + |\jbfm'|_{\infty})}\\[-3ex]
		&\qquad\cdot
		2^{\jx'-\mx(\jbfm,\jbfm')} \big(\Eparam{\jbfm}{x}\big)^{-(q+\tilde{d})}
		2^{\jy'} \int_{2^{-\my(\jbfm,\jbfm')}}^{\infty} t^{-(1+q+\tilde{d})} \dint t\\
		&\lesssim
		2^{-(\tilde{d}+q)(|\jbfm|_{\infty} + |\jbfm'|_{\infty} -\my(\jbfm,\jbfm'))}
		\big(\Eparam{\jbfm}{x}\big)^{-(q+\tilde{d})} \\
		&\lesssim
		2^{-(\tilde{d}+q)(|\jbfm|_{\infty} + |\jbfm'|_{\infty} -\mx(\jbfm,\jbfm') -\my(\jbfm,\jbfm'))}
		\big( 2^{\frac{r}{2} - |\jx -\jx'|} \big)^{-(q+\tilde{d})}.
	\end{align*}
	Since \(|\jbfm|_{\infty} + |\jbfm'|_{\infty} - \my(\jbfm,\jbfm') -
	\mx(\jbfm,\jbfm') \geq |\jx -\jx'|\), we obtain that
	\begin{align*}
		\sum_{\lambdabfm' \in \Ical_{\lambdabfm,\jbfm'}}
		2^{\frac{1}{2}(|\jbfm|_1 - |\jbfm'|_1)} \left|
		\ell_{\lambdabfm,\lambdabfm'} \right| 
		\lesssim 2^{-\frac{\tilde{d} + q}{2}r}
		2^{-(\tilde{d} + q)|\jx-\jx|} 2^{(\tilde{d} + q)|\jx-\jx'|}
		= 2^{-\frac{\tilde{d} + q}{2}r}.
	\end{align*}
	Hence, summing over all \(\jbfm'\) which need to be considered, we
	obtain that the total error is bounded by \(\Ocal(r^2
	2^{-\frac{\tilde{d} + q}{2}r})\), and we can conclude using
	Schur's lemma.
\end{proof}

It is straightforward to exchange the coordinate directions and to define
\(\Lbfm^{(\frac{4}{3}, \mathrm{x})}\) and 
\(\Lbfm^{(\frac{5}{3}, \mathrm{y})}\), for which we have exactly the same error
estimates.
To summarise, we combine the previous theorems 
to obtain the main result of this subsection.

\begin{theorem}
	\label{thm:compression_error_mix_combined}
	Let us define the matrix \(\Lbfm_r^{(\textnormal{mix})}\) as
	\begin{align*}
		[\ell_r^{(\textnormal{mix})}]_{\lambdabfm,\lambdabfm'}\isdef
		\begin{cases}
			0, &\text{if }
			[\ell_r^{(\eta, {\rm z})}]_{\lambdabfm,\lambdabfm'}=0 
			\text{ for some }(\eta, {\rm z}) \in \{\frac{4}{3},
			\frac{5}{3}\} \times \{ {\rm x}, {\rm y} \},\\
			\langle\Lcal\psi_{\lambdabfm'}, \psi_{\lambdabfm}\rangle_\square,
			&\text{else}.
		\end{cases}
	\end{align*}
	Then, there holds \(\|\Lbfm_r^{(1)}-\Lbfm_r^{(\textnormal{mix})}\|_2 
	\lesssim r^2 2^{-\frac{\tilde{d} + q}{2} r}\).
\end{theorem}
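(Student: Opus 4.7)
The plan is to obtain this bound by a triangle-inequality consolidation of the four estimates already in hand: Theorem~\ref{thm:compression_error_mix_y}, Theorem~\ref{thm:mixed_second_previous}, and the two coordinate-exchanged analogues pointed out in the preceding remark. Since $\Lbfm_r^{(\text{mix})}$ is defined by dropping, from $\Lbfm_r^{(1)}$, every entry removed by any of the four compressions $\Lbfm_r^{(\eta,{\rm z})}$ with $(\eta,{\rm z})\in\{\tfrac{4}{3},\tfrac{5}{3}\}\times\{{\rm x},{\rm y}\}$, no new wavelet analysis is required, and the task is purely to combine the component bounds.

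Write $F_{\eta,{\rm z}} \isdef \Lbfm_r^{(1)} - \Lbfm_r^{(\eta,{\rm z})}$ and $E \isdef \Lbfm_r^{(1)} - \Lbfm_r^{(\text{mix})}$. Both $E$ and each $F_{\eta,{\rm z}}$ carry the value $\langle\Lcal\psi_{\lambdabfm'},\psi_{\lambdabfm}\rangle$ on their respective supports, and the support of $E$ equals the union of the supports of the $F_{\eta,{\rm z}}$. I would first record the entrywise majorisation
\[
\big|E_{\lambdabfm,\lambdabfm'}\big| \;\leq\; \sum_{(\eta,{\rm z})} \big|(F_{\eta,{\rm z}})_{\lambdabfm,\lambdabfm'}\big|,
\]
which holds because whenever the left-hand side is nonzero, the index $(\lambdabfm,\lambdabfm')$ lies in some drop set, so at least one term on the right equals $|E_{\lambdabfm,\lambdabfm'}|$.

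From here, I would apply Schur's lemma with the same weighting $2^{\frac{1}{2}(|\lambdabfm|_1 - |\lambdabfm'|_1)}$ used throughout Section~\ref{sec:compression_general}. Since Schur's lemma depends only on the moduli of the entries, the entrywise majorisation transfers directly to the weighted row and column sums, yielding $\|E\|_2 \lesssim \sum_{(\eta,{\rm z})} \|F_{\eta,{\rm z}}\|_2$. The two terms with $\eta=\tfrac{4}{3}$ contribute $\Ocal\big(r\, 2^{-\frac{\tilde{d}+q}{2}r}\big)$ each by Theorem~\ref{thm:compression_error_mix_y} together with its coordinate twin, while the two terms with $\eta=\tfrac{5}{3}$ contribute $\Ocal\big(r^2\, 2^{-\frac{\tilde{d}+q}{2}r}\big)$ each by Theorem~\ref{thm:mixed_second_previous} and its twin. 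Adding these four contributions gives the claimed bound.

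The only minor obstacle, and therefore the point worth stating carefully, is the passage from the entrywise majorisation to a spectral-norm triangle inequality: a naive decomposition $E = \sum_{(\eta,{\rm z})} F_{\eta,{\rm z}}$ would double-count entries belonging to several drop sets, but routing the argument through Schur's lemma on absolute values cleanly bypasses this issue and loses nothing beyond a harmless multiplicative constant.
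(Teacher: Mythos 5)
Your argument is correct and is precisely the implicit combination the paper has in mind: the paper gives no formal proof of this theorem, merely remarking that one ``combines the previous theorems,'' and your Schur-lemma consolidation of Theorems~\ref{thm:compression_error_mix_y} and \ref{thm:mixed_second_previous} and their coordinate-exchanged versions is exactly that combination. One small point of precision worth fixing: the intermediate claim \(\|E\|_2 \lesssim \sum_{(\eta,{\rm z})}\|F_{\eta,{\rm z}}\|_2\) does not quite follow from the entrywise majorisation, since a matrix's spectral norm may be strictly smaller than its Schur bound; what the majorisation actually gives is that the \emph{weighted row and column sums} of \(E\) are dominated by the sums of those of the \(F_{\eta,{\rm z}}\), and these row/column sums are precisely the quantities bounded in the cited theorems. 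Applying Schur's lemma to \(E\) itself then yields \(\|E\|_2 \lesssim r^2 2^{-\frac{\tilde{d}+q}{2}r}\) as desired, so the conclusion is unaffected---only the intermediate inequality should be rephrased in terms of the Schur bounds rather than the spectral norms of the \(F_{\eta,{\rm z}}\).
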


\subsection{Near-Field: Second Compression}
\label{sec:second_compression}

In a final step, we need to develop a compression scheme,
in which we can discard entries associated with wavelet pairs
which are located very close to each other 
or even have overlapping supports. However, 
we must have \(\sigmax(\lambdabfm,\lambdabfm') >0\) or 
\(\sigmay(\lambdabfm,\lambdabfm') >0\) to drop such entries.

As we need to distinguish between three cases, 
we split the error estimate into lemmata.
As usual, we will only consider one coordinate direction, 
but by symmetry, analogous results hold for the other coordinate direction.

\begin{lemma}
	\label{lm:near_field_previous_first}
	Assume that 
	\((|\jbfm|_{\infty},\, |\jbfm'|_{\infty}) = (\jx,\,\jx')\).
	If all entries are dropped for which 
	\begin{align*}
		2^{-\mx(\lambdabfm,\lambdabfm')} &\geq	
		\sigmax(\lambdabfm,\lambdabfm') \geq \Fcal \isdef 
		2^{-\mx(\jbfm,\jbfm')} 2^{\frac{r}{2} - |\jx -\jx'|}, \\
		2^{-\my(\jbfm,\jbfm')} &\geq \deltay(\lambdabfm,\lambdabfm'),
	\end{align*}
	each appropriately weighted row or column sum of the error matrix
	is of order \(\Ocal(2^{-sr})\)
	for any \(s < \frac{\tilde{d}}{2} +q - \max\left\{0,\, q\alpha
	\nicefrac{\bar{s}}{\nu} \right\}\).
\end{lemma}
\begin{proof}
	Let us again fix \(\lambdabfm \in \nablabfm_{\jbfm}\) and \(\jbfm' \geq
	\jbfm_0\) and define the set of discarded entries of level \(\jbfm'\) 
	in the \(\lambdabfm\)-th row as \(\Ical_{\lambdabfm,\jbfm'}\).
	We note that \(\norms{\Ical_{\lambdabfm,\jbfm'}} \sim 2^{|\jbfm'|_1 -
	\mx(\jbfm,\jbfm') - \my(\jbfm,\jbfm')}\).
	Then, by using appropriate weights, we find that,
	using Corollary \ref{cor:near_field_previous_only_1_vanishing}, 	
	\begin{align*}
		\sum_{\lambdabfm' \in \Ical_{\lambdabfm,\jbfm'}}
		2^{\frac{1}{2}(|\jbfm|_1 - |\jbfm'|_1)}
		\left|\ell_{\lambdabfm,\lambdabfm'}\right|
		&\lesssim 2^{-\tilde{d}\maxm(\jbfm,\jbfm')} 2^{-q(\jx + \jx')} 
		\Fcal^{-(2q + \tilde{d})}\\[-0.3cm]
		&\lesssim 2^{-(2q + \tilde{d}) \frac{r}{2}}2^{-(q +\tilde{d})|\jx -
		\jx'|} 2^{(2q + \tilde{d})|\jx -\jx'|}\\
		&= 2^{-(q + \frac{\tilde{d}}{2})r} 2^{q|\jx-\jx'|}.
	\end{align*}
	Hence, if \(q \geq 0\), we have that
	\begin{align*}
		\sum_{|\jbfm -\jbfm'|_{\infty} \leq \alpha r
		\frac{\bar{s}}{\nu}} 
		\sum_{\lambdabfm' \in \Ical_{\lambdabfm,\jbfm'}}
		2^{\frac{1}{2}(|\jbfm|_1 - |\jbfm'|_1)}
		\left|\ell_{\lambdabfm,\lambdabfm'}\right|
		&\lesssim 2^{-(q + \frac{\tilde{d}}{2})r} \sum_{|\kbfm|_{\infty} \leq
		\alpha r \frac{\bar{s}}{\nu}} 2^{q\kx}\\
		&\lesssim r^2 2^{-(q + \frac{\tilde{d}}{2})r} 2^{q \alpha r
		\frac{\bar{s}}{\nu}}.
	\end{align*}
	Note that we can replace \(r^2\) by \(r\) if \(q\) is strictly positive.
	On the other hand, if \(q < 0\), then 
	\begin{align*}
		\sum_{|\jbfm -\jbfm'|_{\infty} \leq \alpha r
		\frac{\bar{s}}{\nu}} 
		\sum_{\lambdabfm' \in \Ical_{\lambdabfm,\jbfm'}}
		2^{\frac{1}{2}(|\jbfm|_1 - |\jbfm'|_1)}
		\left|\ell_{\lambdabfm,\lambdabfm'}\right|
		\lesssim r 2^{-(\frac{\tilde{d}}{2} + q)r}.
	\end{align*}
\end{proof}

\begin{lemma}
	\label{lm:near_field_previous_second}
	Assume (with a slight abuse of notation) that 
	\((|\jbfm|_{\infty},\, |\jbfm'|_{\infty}) 
	= \linebreak (\Mx(\jbfm,\jbfm'),\, \my(\jbfm,\jbfm'))\).
	If all entries which satisfy
	\begin{align*}
		2^{-\mx(\lambdabfm,\lambdabfm')} &\geq 
		\sigmax(\lambdabfm,\lambdabfm') \geq \Fcal \isdef 
		2^{-\frac{1}{2}(\mx(\jbfm,\jbfm') + \my(\jbfm,\jbfm'))}
		2^{\frac{1}{2} (r - |\jbfm -\jbfm'|_1)},\\
		2^{-\my(\lambdabfm,\lambdabfm')} &\geq \deltay(\lambdabfm,\lambdabfm'),
	\end{align*}
	are dropped, 
	each appropriately weighted row or column sum 
	of the error matrix is of order 
	\(\Ocal(2^{-sr})\), provided that \(s < \tilde{d} + q -
	\max\{0,\, q\alpha \nicefrac{\bar{s}}{\nu}\}\).
\end{lemma}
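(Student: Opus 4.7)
The plan is to proceed in parallel with the proof of Lemma~\ref{lm:near_field_previous_first}, but now to invoke Lemma~\ref{lm:near_field_previous} in its full form so that vanishing moments are exploited in \emph{both} coordinate directions. The resulting improvement is precisely what upgrades the decay from $2^{-\frac{\tilde d+q}{2}r}$ to $2^{-(\tilde d+q)r}$, matching the claimed bound. Since the dropped entries satisfy $0 < \sigmax(\lambdabfm,\lambdabfm') \leq 2^{-\mx(\lambdabfm,\lambdabfm')}$, Lemma~\ref{lm:near_field_previous} applies; combined with the lower bound $\sigmax \geq \Fcal$ and the hypothesis $(|\jbfm|_\infty, |\jbfm'|_\infty) = (\Mx, \My)$, which gives $|\jbfm|_\infty + |\jbfm'|_\infty = \Mx + \My$, this yields the per-entry estimate
\[
|\ell_{\lambdabfm,\lambdabfm'}| \lesssim 2^{-\frac{1}{2}|\jbfm-\jbfm'|_1}\, 2^{-(\tilde d + q)(\Mx + \My)}\, \Fcal^{-(2q+2\tilde d)}.
\]

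Next, I would fix $\lambdabfm$ and $\jbfm'$, count the dropped entries $\Ical_{\lambdabfm,\jbfm'}$, and sum with the Schur weight $2^{\frac{1}{2}(|\jbfm|_1 - |\jbfm'|_1)}$. The constraint $\deltay \leq 2^{-\my}$ admits $\Ocal(2^{\jy'-\my})$ choices of $\lambday'$ and $\sigmax \leq 2^{-\mx}$ admits $\Ocal(2^{\jx'-\mx})$ choices of $\lambdax'$, so $|\Ical_{\lambdabfm,\jbfm'}| \lesssim 2^{|\jbfm'|_1-\mx-\my}$. Plugging in $\Fcal = 2^{-\frac{1}{2}(\mx+\my)}2^{\frac{1}{2}(r-|\jbfm-\jbfm'|_1)}$ and exploiting the two identities
\[
\Mx + \My = \mx + \my + |\jbfm-\jbfm'|_1, \qquad |\jbfm|_1 + |\jbfm'|_1 - |\jbfm-\jbfm'|_1 = 2(\mx+\my),
\]
the $(\mx+\my)$-contributions collapse to a single factor $2^{-(\mx+\my)}$ while the $|\jbfm-\jbfm'|_1$-contributions cancel exactly, leaving
\[
\sum_{\lambdabfm' \in \Ical_{\lambdabfm,\jbfm'}} 2^{\frac{1}{2}(|\jbfm|_1-|\jbfm'|_1)} |\ell_{\lambdabfm,\lambdabfm'}| \lesssim 2^{-(\tilde d+q)r}.
\]

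To conclude, the diagonal cutoff inherited from $\Lbfm_r^{(0)}$ confines $\jbfm'$ to $|\jbfm-\jbfm'|_\infty \leq \alpha\frac{d-q}{\gamma-q}r$, so only $\Ocal(r^2)$ values of $\jbfm'$ remain and the total weighted row sum is $\Ocal(r^2\, 2^{-(\tilde d+q)r})$. The analogous column sum bound follows by the symmetric roles of $\lambdabfm$ and $\lambdabfm'$, and Schur's lemma then delivers the desired spectral norm estimate.

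The main obstacle is the exponent bookkeeping in the second step: contributions of $\mx, \my$ and $|\jbfm-\jbfm'|_1$ enter through the per-entry bound, through the cardinality of $\Ical_{\lambdabfm,\jbfm'}$, and through the explicit form of $\Fcal$, and one has to verify that the two identities above collapse all of them cleanly into the single factor $2^{-(\tilde d+q)r}$. This is also where the improvement over Lemma~\ref{lm:near_field_previous_first} is realised: using Lemma~\ref{lm:near_field_previous} (vanishing moments in both coordinates) rather than Corollary~\ref{cor:near_field_previous_only_1_vanishing} doubles the exponent of $\Fcal$ from $2q+\tilde d$ to $2q+2\tilde d$, which is precisely what justifies the stronger hypothesis $|\jbfm-\jbfm'|_1 \geq r$ in place of the one-sided $|\jx-\jx'| \geq r/2$ used earlier.
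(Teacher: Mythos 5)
Your proposal is correct and follows essentially the same route as the paper: apply Lemma~\ref{lm:near_field_previous} (vanishing moments in both directions) to get the per-entry bound with $\sigmax^{-(2q+2\tilde d)}$, bound $\sigmax^{-(2q+2\tilde d)} \leq \Fcal^{-(2q+2\tilde d)}$, count $|\Ical_{\lambdabfm,\jbfm'}| \lesssim 2^{|\jbfm'|_1-\mx-\my}$, and observe via the identities $\Mx+\My=\mx+\my+|\jbfm-\jbfm'|_1$ and $|\jbfm|_1+|\jbfm'|_1-|\jbfm-\jbfm'|_1=2(\mx+\my)$ that everything collapses to $2^{-(\tilde d+q)r}$, after which the $O(r^2)$ diagonal band and Schur's lemma finish the argument. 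The paper compresses this into the single displayed chain $\lesssim 2^{-(\tilde d+q)(\Mx+\My)}\Fcal^{-(2q+2\tilde d)} = 2^{-(\tilde d+q)r}$, leaving the cardinality count implicit; your version spells it out, and your closing remark correctly identifies where the stronger hypothesis $|\jbfm-\jbfm'|_1\geq r$ is used. One phrasing slip: the $(\mx+\my)$-contributions do not ``collapse to a single factor $2^{-(\mx+\my)}$'' — they cancel completely (the $2^{\mx+\my}$ from the Schur weight and distance factor cancels the $2^{-(\mx+\my)}$ from the cardinality, and the $\mx+\my$-powers in $2^{-(\tilde d+q)(\Mx+\My)}$ and $\Fcal^{-(2q+2\tilde d)}$ cancel each other); this does not affect the (correct) final bound $\lesssim 2^{-(\tilde d+q)r}$ you state.
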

\begin{proof}
	We use Lemma \ref{lm:near_field_previous}
	and again \(\norms{\Ical_{\lambdabfm,\jbfm'}} \sim
	2^{\norms{\jbfm'}_{1} - \mx(\jbfm,\jbfm') - \my(\jbfm,\jbfm')}\)
	to obtain 
	\begin{align}
		\notag
		\sum_{\lambdabfm' \in \Ical_{\lambdabfm,\jbfm'}}
		2^{\frac{1}{2}(|\jbfm|_1 - |\jbfm'|_1)}
		\left|\ell_{\lambdabfm,\lambdabfm'}\right|
		&\lesssim
		2^{-\tilde{d} (\Mx(\jbfm,\jbfm') +\My(\jbfm,\jbfm'))} 
		2^{-q(\Mx(\jbfm,\jbfm') + \my(\jbfm,\jbfm'))}
		\Fcal^{-(2q + 2\tilde{d})}\\[-0.3cm]
		\notag
		&= 2^{-(q + \tilde{d}) r}
		2^{-(\tilde{d} + q)|\jx - \jx'|}
		2^{-\tilde{d}|\jy - \jy'|}
		2^{(\tilde{d} + q)|\jbfm -\jbfm'|_1} \\
		\label{eq:to_reduce}
		&= 2^{-(\tilde{d} + q)r} 2^{q|\jy -\jy'|}.
	\end{align}
	Therefore, we may bound the weighted row sum by
	\begin{align*}
		\sum_{|\jbfm -\jbfm'|_{\infty} \leq \alpha r
		\frac{\bar{s}}{\nu}}
		\sum_{\lambdabfm' \in \Ical_{\lambdabfm,\jbfm'}}
		2^{\frac{1}{2}(|\jbfm|_1 - |\jbfm'|_1)}
		\left|\ell_{\lambdabfm,\lambdabfm'}\right|
		&\lesssim 
		2^{-(\tilde{d} + q)r}
		\sum_{|\jbfm -\jbfm'|_{\infty} \leq \alpha r
		\frac{\bar{s}}{\nu}}
		2^{q|\jy - \jy'|}\\
		&\lesssim r^2 2^{-(\tilde{d} + q - \max\{0,\, q\alpha
		\nicefrac{\bar{s}}{\nu}\})r}.
	\end{align*}
\end{proof}

Interchanging the coordinate directions immediately implies the following
corollary.

\begin{corollary}
	Assume (with a slight abuse of notation) that
	\((|\jbfm|_{\infty}, |\jbfm'|_{\infty}) = \linebreak (\mx(\jbfm,\jbfm'),
	\My(\jbfm,\jbfm'))\).
	Then, the statement of Lemma \ref{lm:near_field_previous_second} remains
	true.
\end{corollary}

If \((|\jbfm|_{\infty}, |\jbfm'|_{\infty}) = (\Mx(\jbfm,\jbfm'),
\My(\jbfm,\jbfm'))\), on the other hand,
\eqref{eq:to_reduce} reduces to \(2^{-(\tilde{d} + q)r}\).
Therefore, the following corollary holds as well.

\begin{corollary}
	Assume (with a slight abuse of notation) that
	\((|\jbfm|_{\infty}, |\jbfm'|_{\infty}) = \linebreak (\Mx(\jbfm,\jbfm'),
	\My(\jbfm,\jbfm'))\).
	Then, if all entries specified in Lemma \ref{lm:near_field_previous_second}
	are dropped, each appropriately weighted row or column sum of the error
	matrix is of order \(\Ocal(r^2 2^{-(\tilde{d} + q)r})\).
\end{corollary}

Finally, we also need to estimate the error for the second compression when both
\(|\cdot|_{\infty}\)-norms correspond to the \(\mathrm{y}\)-coordinate.

\begin{lemma}
	\label{lm:near_field_previous_third}
	Assume that \((|\jbfm|_{\infty},\, |\jbfm'|_{\infty}) = (\jy,\, \jy')\).
	If all entries are dropped for which
	\begin{align*}
		2^{-\mx(\lambdabfm,\lambdabfm')} &\geq 
		\sigmax(\lambdabfm,\lambdabfm') \geq \Fcal \isdef 
		2^{-\mx(\jbfm,\jbfm')} 2^{\frac{r}{2} - |\jx -\jx'|},\\
		2^{-\my(\jbfm,\jbfm')} &\geq \deltay(\lambdabfm,\lambdabfm'),
	\end{align*}
	then each appropriately weighted row or column sum of
	the error matrix is of order \(\Ocal(2^{-sr})\),
	provided \(s < \tilde{d} + q - \max\left\{0,\,
	q \alpha \nicefrac{\bar{s}}{\nu}\right\}\).
\end{lemma}
\begin{proof}
	Once more, we use \(\norms{\Ical_{\lambdabfm,\jbfm'}} \sim
	2^{\norms{\jbfm}_{1} - \mx(\jbfm,\jbfm') - \my(\jbfm,\jbfm')}\)
	and Lemma \ref{lm:near_field_previous} to obtain that
	\begin{align}
		\notag
		\sum_{\lambdabfm' \in \Ical_{\lambdabfm,\jbfm'}}
		2^{\frac{1}{2}(|\jbfm|_1 - |\jbfm'|_1)}
		\left|\ell_{\lambdabfm,\lambdabfm'}\right|
		&\lesssim
		2^{-\tilde{d} (\Mx(\jbfm,\jbfm') + \My(\jbfm,\jbfm'))} 
		2^{-q (\jy + \jy')}
		\Fcal^{-(2q + 2\tilde{d})}\\[-0.3cm]
		\notag
		&= 2^{-(q + \tilde{d}) r}
		2^{-\tilde{d}(\Mx(\jbfm,\jbfm') + 
		\My(\jbfm,\jbfm') -2 \mx(\jbfm,\jbfm'))}\\
		\label{eq:to_differ_to_bound}
		&\qquad\qquad
		\cdot 2^{-q(\jy + \jy' - 2\mx(\jbfm,\jbfm'))}
		2^{(2q + 2\tilde{d})|\jx -\jx'|}.
	\end{align}
	Due to the identities	
	\begin{align*}
		\My(\jbfm, \jbfm') -\mx(\jbfm,\jbfm')
		&= |\jx -\jx'| + \My(\jbfm,\jbfm')
		-\Mx(\jbfm,\jbfm'),\\
		\jy +\jy' -2\mx(\jbfm,\jbfm')
		&= 2 |\jx -\jx'| - |\jy -\jy'| + 2\big(\My(\jbfm,\jbfm') -
		\Mx(\jbfm,\jbfm') \big),
	\end{align*}
	we may use that fact that 
	\((\tilde{d} + 2q)(\My(\jbfm,\jbfm') -\Mx(\jbfm,\jbfm')) \geq 0\)
	to bound \eqref{eq:to_differ_to_bound} by
	\begin{align*}
		\sum_{\lambdabfm' \in \Ical_{\lambdabfm,\jbfm'}}
		2^{\frac{1}{2}(|\jbfm|_1 - |\jbfm'|_1)}
		\left|\ell_{\lambdabfm,\lambdabfm'}\right|
		&\lesssim 2^{-(q + \tilde{d})r} 
		2^{q|\jy-\jy'|}.
	\end{align*}
	Thus, again, if \(q \geq 0\), there holds
	\begin{align*}
		\sum_{|\jbfm - \jbfm'|_{\infty} \leq \alpha r
		\frac{\bar{s}}{\nu}} 
		\sum_{\lambdabfm' \in \Ical_{\lambdabfm,\jbfm'}}
		2^{\frac{1}{2}(|\jbfm|_1 - |\jbfm'|_1)}
		\left|\ell_{\lambdabfm,\lambdabfm'}\right|
		&\lesssim 
		\sum_{|\jbfm - \jbfm'|_{\infty} \leq \alpha r
		\frac{\bar{s}}{\nu}} 
		2^{-(q + \tilde{d})r} 2^{q|\jy-\jy'|}\\
		&\lesssim
		r^2 2^{-(\tilde{d} + q - q\alpha
		\nicefrac{\bar{s}}{\nu}) r},
	\end{align*}
	whereas if \(q < 0\), 
	\begin{align*}
		\sum_{|\jbfm - \jbfm'|_{\infty} \leq \alpha r
		\frac{\bar{s}}{\nu}} 
		\sum_{\lambdabfm' \in \Ical_{\lambdabfm,\jbfm'}}
		2^{\frac{1}{2}(|\jbfm|_1 - |\jbfm'|_1)}
		\left|\ell_{\lambdabfm,\lambdabfm'}\right|
		&\lesssim 
		r 2^{-(\tilde{d} + q) r}.
	\end{align*}
\end{proof}

By combining Lemmata \ref{lm:near_field_previous_first},
\ref{lm:near_field_previous_second}, and
\ref{lm:near_field_previous_third} with Schur's lemma, 
we can pose the following theorem.

\begin{theorem}
	\label{thm:near_field_previous}
	Let us define the cutoff parameter
	\begin{align*}
		\Fparam{\jbfm}{x} \isdef 
		\begin{cases}
			2^{-\mx(\jbfm,\jbfm')} 2^{\frac{r}{2} - |\jx -\jx'|}, 
			&\text{if } (|\jbfm|_{\infty}, |\jbfm'|_{\infty}) = (\jx,\jx'),\\
			2^{-\mx(\jbfm,\jbfm')} 2^{\frac{r}{2} - |\jx -\jx'|}, 
			&\text{if } (|\jbfm|_{\infty}, |\jbfm'|_{\infty}) = (\jy,\jy'),\\
			2^{-\frac{1}{2}(\mx(\jbfm,\jbfm') + \my(\jbfm,\jbfm'))}
			2^{\frac{1}{2}(r - |\jbfm -\jbfm'|_1)},
			&\text{else},
		\end{cases}
	\end{align*}
	and the matrix \(\Lbfm^{(2, \mathrm{x})}_r\) by the entries
	\begin{equation}
		[\ell_r^{(2,{\rm x})}]_{\lambdabfm,\lambdabfm'}\isdef
		\begin{cases}
			0, &\text{if }[\ell_r^{(\textnormal{mix})}]_{\lambdabfm,\lambdabfm'}=0, \\
			0, &\text{if }
			\begin{cases}
				2^{-\mx(\lambdabfm,\lambdabfm')} \geq
				\sigmax(\lambdabfm,\lambdabfm')\geq \Fparam{\lambdabfm}{x},\\
				2^{-\my(\lambdabfm,\lambdabfm')} \geq \deltay(\lambdabfm,\lambdabfm'),
			\end{cases}\\
			\langle\Lcal\psi_{\lambdabfm'}, \psi_{\lambdabfm}\rangle_\square,
			&\text{else}.
		\end{cases}
		\label{eq:compression_scheme_second_x}
	\end{equation}
	Then, for any \(s < \frac{\tilde{d}}{2} + q - \max\left\{0,\, q\alpha
	\nicefrac{\bar{s}}{\nu} \right\}\), 
	the cutoff error is asymptotically bounded by 
	\(\|\Lbfm_r^{(\operatorname{mix})} - \Lbfm_r^{(2, \mathrm{x})}\| \lesssim 2^{-sr}\).
\end{theorem}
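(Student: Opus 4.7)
The plan is to reduce the statement to the three cases already handled by Lemmata~\ref{lm:near_field_previous_first}, \ref{lm:near_field_previous_second}, and \ref{lm:near_field_previous_third}, since the piecewise definition of \(\Fparam{\jbfm}{x}\) is tailored so that each branch corresponds exactly to one of those lemmas. First, I classify every pair \((\lambdabfm, \lambdabfm')\) for which \([\ell_r^{(\mathrm{mix})}]_{\lambdabfm,\lambdabfm'} \neq [\ell_r^{(2,\mathrm{x})}]_{\lambdabfm,\lambdabfm'}\) according to which of the three configurations the pair \((|\jbfm|_\infty, |\jbfm'|_\infty)\) is in. This yields a disjoint decomposition of the dropped entries into three sets, and on each of them \(\Fparam{\jbfm}{x}\) coincides verbatim with the quantity \(\Fcal\) used in the corresponding lemma.

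On the first set, where \((|\jbfm|_\infty, |\jbfm'|_\infty) = (\jx, \jx')\), the very fact that the dropped range \(\Fparam{\jbfm}{x} \leq \sigmax(\lambdabfm,\lambdabfm') \leq 2^{-\mx(\lambdabfm,\lambdabfm')}\) is nonempty forces \(2^{\frac{r}{2} - |\jx-\jx'|} \leq 1\), i.e.\ \(|\jx - \jx'| \geq \frac{r}{2}\), so Lemma~\ref{lm:near_field_previous_first} applies directly and yields an error contribution of order \(2^{-sr}\) for any \(s < \frac{\tilde{d}}{2} + q - \max\{0, q\alpha(d-q)/(\gamma-q)\}\). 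Analogously, on the third set (both \(\ell^\infty\)-maxima on the \(\mathrm{y}\)-coordinate), the same bookkeeping forces \(|\jx-\jx'| \geq \frac{r}{2}\), and Lemma~\ref{lm:near_field_previous_third} delivers a contribution of order \(2^{-sr}\) for \(s < \tilde{d} + q - \max\{0, q\alpha(d-q)/(\gamma-q)\}\), which is strictly better than the first one.

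For the second set, where the \(\ell^\infty\)-maxima fall on different coordinates, the nonemptiness of the dropped \(\sigmax\)-range analogously forces \(|\jbfm - \jbfm'|_1 \geq r\), so Lemma~\ref{lm:near_field_previous_second} is applicable with the same \(\Fcal = \Fparam{\jbfm}{x}\), producing a contribution of order \(r^2 2^{-(\tilde{d}+q)r}\). The spectral-norm estimate for the whole difference then follows by Schur's lemma applied to each of the three weighted row- and column-sum estimates (each already produced inside the individual lemmas) and summing the three bounds via the triangle inequality. The bottleneck among the three exponents is the one coming from Lemma~\ref{lm:near_field_previous_first}, which is exactly the rate claimed.

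The main obstacle is the somewhat delicate bookkeeping in Step~1: one has to verify that each dropped entry falls into precisely one of the three regimes, that the \(\ell^\infty\)-level pattern listed in the corresponding branch of \(\Fparam{\jbfm}{x}\) matches the hypothesis of the corresponding lemma (including the implicit forcing of the level gaps \(|\jx - \jx'| \geq \frac{r}{2}\) or \(|\jbfm - \jbfm'|_1 \geq r\)), and that nothing is double-counted. Once this case split is clean, no further analytical work is required beyond quoting the three lemmas and adding their contributions.
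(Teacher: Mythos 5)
Your overall plan is exactly the paper's (the paper states Theorem~\ref{thm:near_field_previous} simply as the combination of Lemmata~\ref{lm:near_field_previous_first}--\ref{lm:near_field_previous_third} and gives no further proof), and your handling of the first and third branches is right: in those regimes \(\mx(\jbfm,\jbfm') \geq \my(\jbfm,\jbfm')\) (first case) or \(\my(\jbfm,\jbfm') \geq \mx(\jbfm,\jbfm')\) (third case), so nonemptiness of the dropped \(\sigmax\)-range \([\Fparam{\jbfm}{x}, 2^{-\mx(\jbfm,\jbfm')}]\) indeed forces \(|\jx-\jx'|\geq\frac{r}{2}\), and the rate is dominated by Lemma~\ref{lm:near_field_previous_first}.

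The one assertion that does not hold is the claim that, in the second branch, nonemptiness of the dropped range forces \(|\jbfm-\jbfm'|_1 \geq r\). With \(\Fparam{\jbfm}{x} = 2^{-\frac{1}{2}(\mx(\jbfm,\jbfm')+\my(\jbfm,\jbfm'))}2^{\frac{1}{2}(r-|\jbfm-\jbfm'|_1)}\), the condition \(\Fparam{\jbfm}{x}\le 2^{-\mx(\jbfm,\jbfm')}\) reads \(r \le |\jbfm-\jbfm'|_1 + \my(\jbfm,\jbfm') - \mx(\jbfm,\jbfm')\). In the configuration \((|\jbfm|_\infty,|\jbfm'|_\infty) = (\Mx(\jbfm,\jbfm'),\My(\jbfm,\jbfm'))\) there is no relation forced between \(\mx(\jbfm,\jbfm')\) and \(\my(\jbfm,\jbfm')\); e.g.\ \(\jbfm=(10,8)\), \(\jbfm'=(3,9)\) gives \(\my=8>\mx=3\), \(|\jbfm-\jbfm'|_1=8\), and for \(r=10\) the dropped range is nonempty although \(|\jbfm-\jbfm'|_1<r\). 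So quoting Lemma~\ref{lm:near_field_previous_second} verbatim leaves those pairs uncovered. This does not, however, endanger the conclusion: inspecting the proof of Lemma~\ref{lm:near_field_previous_second}, the product \(2^{-(\tilde d+q)(\Mx(\jbfm,\jbfm')+\My(\jbfm,\jbfm'))}\Fcal^{-(2q+2\tilde d)}\) collapses to \(2^{-(q+\tilde d)r}\) identically, without any use of the hypothesis \(|\jbfm-\jbfm'|_1\geq r\), and that hypothesis only serves to guarantee nonemptiness under the implicit assumption \(\my\geq\mx\). To close the argument cleanly you should either observe that Lemma~\ref{lm:near_field_previous_second} in fact holds with the hypothesis \(|\jbfm-\jbfm'|_1\geq r\) dropped (its proof is unchanged), or replace the ``forcing'' claim by the correct statement \(r\le|\jbfm-\jbfm'|_1 + \my(\jbfm,\jbfm') - \mx(\jbfm,\jbfm')\) and note that this still yields the bound \(2^{-(\tilde d+q)r}\) per level pair and hence \(\Ocal(r^2 2^{-(\tilde d+q)r})\) after summing over \(|\jbfm-\jbfm'|_\infty\leq\alpha\frac{d-q}{\gamma-q}r\).
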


In complete analogy, we define \(\Lbfm^{(2, \mathrm{y})}_r\) to state the
following theorem.

\begin{theorem}
	\label{thm:compression_scheme_second_combined}
	Let us define the matrix \(\Lbfm_r^{(2)}\) by
	\begin{equation*}
		[\ell_r^{(2)}]_{\lambdabfm,\lambdabfm'}\isdef
		\begin{cases}
			0, &\text{if }[\ell_r^{(2, {\rm z})}]_{\lambdabfm,\lambdabfm'}=0
			\text{ for some } \mathrm{z} \in \{\mathrm{x}, \mathrm{y}\}, \\
			\langle\Lcal\psi_{\lambdabfm'}, \psi_{\lambdabfm}\rangle_\square,
			&\text{else}.
		\end{cases}
	\end{equation*}
	Then, for any \(s < \frac{\tilde{d}}{2} + q - \max\left\{0,\, q\alpha
	\nicefrac{\bar{s}}{\nu} \right\}\), 
	there holds \(\|\Lbfm_r^{(\textnormal{mix})} -\Lbfm_r^{(2)}\|_2
	\lesssim 2^{-sr}\).
\end{theorem}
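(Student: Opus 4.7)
The proof will be essentially a bookkeeping step on top of Theorem~\ref{thm:near_field_previous} and its counterpart in the \(\mathrm{y}\)-direction, so I do not expect any genuine obstacle. The key observation is that, by construction, an entry that is nonzero in \(\Lbfm_r^{(\mathrm{mix})}\) but zero in \(\Lbfm_r^{(2)}\) must have been dropped by either the \(\mathrm{x}\)-scheme or the \(\mathrm{y}\)-scheme. Setting
\[
        S_{\mathrm{z}} \isdef \bigl\{(\lambdabfm,\lambdabfm') :
        [\ell_r^{(\mathrm{mix})}]_{\lambdabfm,\lambdabfm'} \neq 0
        \text{ and } [\ell_r^{(2,\mathrm{z})}]_{\lambdabfm,\lambdabfm'} = 0
        \bigr\}, \quad \mathrm{z}\in\{\mathrm{x},\mathrm{y}\},
\]
the support of \(\Lbfm_r^{(\mathrm{mix})} -\Lbfm_r^{(2)}\) is exactly \(S_{\mathrm{x}} \cup S_{\mathrm{y}}\), and on this union every nonzero entry equals \(\langle \Lcal\psi_{\lambdabfm'},\psi_{\lambdabfm}\rangle_{\square}\).

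The plan is then to use the entrywise bound
\[
        \bigl|[\ell_r^{(\mathrm{mix})}]_{\lambdabfm,\lambdabfm'} -
        [\ell_r^{(2)}]_{\lambdabfm,\lambdabfm'}\bigr|
        \;\leq\; \bigl|[\ell_r^{(\mathrm{mix})}]_{\lambdabfm,\lambdabfm'} -
        [\ell_r^{(2,\mathrm{x})}]_{\lambdabfm,\lambdabfm'}\bigr|
        + \bigl|[\ell_r^{(\mathrm{mix})}]_{\lambdabfm,\lambdabfm'} -
        [\ell_r^{(2,\mathrm{y})}]_{\lambdabfm,\lambdabfm'}\bigr|,
\]
which is immediate from the indicator-function reformulation above, and then feed this into exactly the same Schur-type argument that proves Theorem~\ref{thm:near_field_previous}. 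That is, for fixed \(\lambdabfm\), the weighted row sum
\(\sum_{\lambdabfm'} 2^{\frac{1}{2}(|\jbfm|_1 - |\jbfm'|_1)} \bigl|[\ell_r^{(\mathrm{mix})} - \ell_r^{(2)}]_{\lambdabfm,\lambdabfm'}\bigr|\)
is, by the triangle inequality above, bounded by the sum of the corresponding row sums for the two one-direction drops. Each of those is \(\Ocal(2^{-sr})\) by the proof of Theorem~\ref{thm:near_field_previous} (the three Lemmata \ref{lm:near_field_previous_first}--\ref{lm:near_field_previous_third}, applied once in each coordinate direction). The argument for the column sums is entirely symmetric.

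With both the weighted row and column sums of \(|\Lbfm_r^{(\mathrm{mix})} -\Lbfm_r^{(2)}|\) controlled by \(\Ocal(2^{-sr})\), Schur's lemma (applied as in the proofs of Theorems~\ref{thm:diagonal_cutoff}--\ref{thm:near_field_previous}) delivers \(\|\Lbfm_r^{(\mathrm{mix})} -\Lbfm_r^{(2)}\|_2 \lesssim 2^{-sr}\) for any \(s < \frac{\tilde{d}}{2} + q - \max\{0,\, q\alpha \frac{d-q}{\gamma-q}\}\), which is the claimed bound. The only thing one needs to verify carefully is that the \(\mathrm{y}\)-direction analog of Theorem~\ref{thm:near_field_previous} really is available in full, which follows from the symmetry remark already stated between that theorem and its implicit counterpart, so no new estimates are required.
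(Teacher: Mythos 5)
Your argument is correct, and it fills in a proof that the paper itself omits: after Theorem~\ref{thm:near_field_previous} and the remark that \(\Lbfm_r^{(2,\mathrm{y})}\) is defined in complete analogy, the authors state Theorem~\ref{thm:compression_scheme_second_combined} without proof, clearly regarding it as immediate. The decisive observation is exactly the one you make: since an entry of \(\Lbfm_r^{(2)}\) is set to zero iff it is set to zero in at least one of \(\Lbfm_r^{(2,\mathrm{x})}\), \(\Lbfm_r^{(2,\mathrm{y})}\), and all nonzero entries in all of these matrices equal the same Galerkin value, one has the entrywise inequality
\[
\bigl|[\ell_r^{(\mathrm{mix})}-\ell_r^{(2)}]_{\lambdabfm,\lambdabfm'}\bigr|
\;\leq\;
\bigl|[\ell_r^{(\mathrm{mix})}-\ell_r^{(2,\mathrm{x})}]_{\lambdabfm,\lambdabfm'}\bigr|
+\bigl|[\ell_r^{(\mathrm{mix})}-\ell_r^{(2,\mathrm{y})}]_{\lambdabfm,\lambdabfm'}\bigr|,
\]
which you then push through the weighted row/column sums and Schur's lemma exactly as in the proofs of Lemmata~\ref{lm:near_field_previous_first}--\ref{lm:near_field_previous_third} and Theorem~\ref{thm:near_field_previous}. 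This is the intended argument; no genuinely different route is involved, and nothing is missing.
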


\subsection{Complexity}

Let us now estimate the number of remaining, 
nontrivial matrix entries. 
Also here, we start with the first compression.

\begin{theorem}
	For any \(\lambdabfm \in\nablabfm\),
	consider all entries \([\ell_r]_{\lambdabfm,\lambdabfm'}\)
	for indices satisfying \(\delta(\lambdabfm,\lambdabfm')
	\geq 2^{-\minm(\lambdabfm,\lambdabfm')}\).
	Then, the number of such
	entries in the \(\lambdabfm\)-th row or column
	is asymptotically bounded by \(r2^{r}\).
	\label{thm:complexity_first}
\end{theorem}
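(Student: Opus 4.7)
The plan is to argue row by row (the column count is symmetric) and reduce the problem to a planar counting estimate. First I would observe that any nontrivial entry in \(\Lbfm_r^{(1)}\) with \(\delta(\lambdabfm,\lambdabfm') \geq 2^{-\minm(\lambdabfm,\lambdabfm')}\) forces \(\Bcal_{\lambdabfm,\lambdabfm'} > 2^{-\minm}\), so the maximum in \eqref{eq:cutoff_parameter_first_general} is realised by the second (decay) term. Since \(\tfrac{1}{2}(\mx+\my) \geq \minm\), this additionally forces \(|\jbfm-\jbfm'|_\infty \leq r/2\), reducing the count to
\[
	\sum_{|\jbfm-\jbfm'|_\infty \leq r/2}
	\#\big\{\lambdabfm' \in \nablabfm_{\jbfm'} : \delta(\lambdabfm,\lambdabfm') < \Bcal_{\jbfm,\jbfm'}\big\}.
\]

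For each fixed \(\jbfm'\) in this range I would apply a tube-area argument. Writing \(B \isdef \Bcal_{\jbfm,\jbfm'}\), the \(B\)-neighbourhood of the rectangle \(\Omega_{\lambdabfm}\) has area at most a constant multiple of \((2^{-\jx}+B)(2^{-\jy}+B)\), which collapses to \(\lesssim B^2\) because \(B \geq 2^{-\minm}\) dominates both \(2^{-\jx}\) and \(2^{-\jy}\). Dividing by the area \(2^{-\jx'-\jy'}\) of a single level-\(\jbfm'\) support, and setting \(\kbfm \isdef \jbfm'-\jbfm\), the identities \(\jx'-\mx = \max\{0,\kx\}\) and \(\jy'-\my = \max\{0,\ky\}\) yield a per-level count of
\[
	B^2 \cdot 2^{\jx'+\jy'} \lesssim 2^{\max\{0,\kx\} + \max\{0,\ky\}}\cdot 2^{2\xi(r/2 - |\kbfm|_\infty)}.
\]

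Finally I would sum over the integer offsets \(\kbfm\) with \(|\kbfm|_\infty \leq r/2\). Bounding \(\max\{0,\kx\} + \max\{0,\ky\} \leq 2|\kbfm|_\infty\) and using that there are \(O(m)\) such \(\kbfm\) with \(|\kbfm|_\infty = m\) reduces the total to
\[
	\sum_{m=0}^{\lfloor r/2\rfloor} m \cdot 2^{2m} \cdot 2^{2\xi(r/2-m)}
	= 2^{\xi r}\sum_{m=0}^{\lfloor r/2\rfloor} m \cdot 2^{(2-2\xi)m},
\]
a geometric sum with ratio \(2^{2-2\xi}>1\) (since \(\xi < 1\)), hence dominated by its largest term at \(m = \lfloor r/2\rfloor\); this gives the desired bound \(\lesssim r \cdot 2^{r}\), and the symmetric argument handles the columns.

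The step I expect to require most care is the tube-area bound: one has to exploit \(B \geq 2^{-\minm} \geq \max\{2^{-\jx},\,2^{-\jy},\,2^{-\jx'},\,2^{-\jy'}\}\) carefully so that all interior and boundary contributions are absorbed into \(B^2\) without spurious factors. Once this geometric estimate is in place, the per-level count and the ensuing geometric summation are routine.
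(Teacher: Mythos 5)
Your proposal is correct and follows essentially the same route as the paper's proof: reduce to the case where the decay term dominates in \eqref{eq:cutoff_parameter_first_general}, observe that this forces \(|\jbfm-\jbfm'|_\infty \leq r/2\), count nontrivial entries per level as \(2^{|\jbfm'|_1}\Bcal_{\jbfm,\jbfm'}^2\), and sum a geometric series in \(|\jbfm-\jbfm'|_\infty\). Your version simply spells out the tube-area step (justifying the factor \(2^{|\jbfm'|_1}\Bcal_{\jbfm,\jbfm'}^2\) via \(\Bcal \geq 2^{-\minm}\)) and uses \(\max\{0,\kx\}+\max\{0,\ky\}\leq 2|\kbfm|_\infty\) in place of passing through \(|\jbfm-\jbfm'|_1\), which the paper leaves implicit; both are sound and yield the same \(r2^r\) bound.
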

\begin{proof}
	We fix \(r\in\Nbbb\) and 
	we note that the number of
	such entries can be estimated by
	\begin{equation*}
		N_r \lesssim \sum_{|\jbfm-\jbfm'|_{\infty} \leq \alpha r
		\frac{\bar{s}}{\nu}}
		2^{|\jbfm'|_1}\Bcal_{\jbfm,\jbfm'}^2,
	\end{equation*}
	where \(\jbfm\) is fixed and denotes the level of \(\lambdabfm\).
	Moreover, we may assume 
	without loss of generality that
	\begin{equation}
		\Bcal_{\jbfm,\jbfm'} = 2^{-\frac{1}{2}(\mx(\jbfm,\jbfm')+\my(\jbfm,\jbfm'))}
		2^{\xi(\frac{r}{2} - |\jbfm-\jbfm'|_{\infty})}
		\label{eq:cutoff_first_not_minimum}
	\end{equation}
	for some \(\nicefrac{1}{2} < \xi < 1\),
	because if \(\Bcal_{\jbfm,\jbfm'} = 2^{-\minm(\jbfm,\jbfm')}\),
	then all such entries are compressed and therefore trivial.
	Hence, by recalling \eqref{eq:cutoff_parameter_first_general}, we may also assume
	that \(|\jbfm-\jbfm'|_{\infty} \leq \nicefrac{r}{2}\).
	Therefore, we have
	\begin{align*}
		N_r &\lesssim \sum_{|\jbfm - \jbfm'|_{_\infty} \leq \frac{r}{2}}
		2^{|\jbfm'|_1} \Bcal_{\jbfm,\jbfm'}^{2}
		\leq 2^{\xi r} \sum_{|\jbfm - \jbfm'|_{_\infty} \leq \frac{r}{2}}
		2^{|\jbfm-\jbfm'|_1-2\xi|\jbfm-\jbfm'|_{\infty}}\\
		&\leq 2^{\xi r} \sum_{|\jbfm - \jbfm'|_{_\infty} \leq \frac{r}{2}}
		2^{2(1-\xi)|\jbfm-\jbfm'|_{\infty}}\\
		&\lesssim r2^r.
	\end{align*}
\end{proof}

In the next step, we will estimate the number of entries in the mixed
field, that is, those entries which are in the far-field in one direction
and in the near-field in the other direction.

\begin{theorem}
	\label{thm:complexity_mixed_previous}
	Consider all entries such that
	\begin{align*}
		\deltay(\lambdabfm,\lambdabfm') \geq 2^{-\my(\lambdabfm,\lambdabfm')},
		\qquad
		\deltax(\lambdabfm,\lambdabfm') \leq
		2^{-\mx(\lambdabfm,\lambdabfm')}.
	\end{align*}
	Then, only \(\Ocal(r 2^r)\) nontrivial entries remain.
\end{theorem}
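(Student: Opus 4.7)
The plan is to adapt the counting argument of Theorem~\ref{thm:complexity_first} to the mixed-field situation. For fixed $\lambdabfm\in\nablabfm_\jbfm$ I would estimate, for every admissible level $\jbfm'$, the number $N_{\jbfm,\jbfm'}$ of multiindices $\lambdabfm'\in\nablabfm_{\jbfm'}$ that fulfill both mixed-field conditions and still survive every compression step so far, and then sum those contributions over $\jbfm'$.

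A first, important reduction is that the mix compression in the $\mathrm{y}$-direction already enforces $|\jy-\jy'|<r/2$: by \eqref{eq:cutoff_mixed_field_y_far}, as soon as $|\jy-\jy'|\geq r/2$ we have $\Dparam{\lambdabfm}{y}=2^{-\my}$, so the scheme \eqref{eq:compression_scheme_mix_y} annihilates every entry satisfying $\deltay\geq 2^{-\my}$ and $\deltax\leq 2^{-\mx}$. Combined with the diagonal cut-off $|\jbfm-\jbfm'|_\infty\leq\alpha\frac{d-q}{\gamma-q}r$ this confines the summation range for $\jbfm'$.

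For each admissible $\jbfm'$ I would bound $N_{\jbfm,\jbfm'}$ in two complementary ways. Geometrically, the condition $\deltax\leq 2^{-\mx}$ yields at most $\lesssim 2^{\max\{0,\jx'-\jx\}}$ choices for $\lambdax'$, while $2^{-\my}\leq\deltay<\Dparam{\lambdabfm}{y}$ yields at most $\lesssim 2^{\max\{0,\jy'-\jy\}}2^{\theta(r/2-|\jy-\jy'|)}$ choices for $\lambday'$. Alternatively, the first-compression constraint $\delta<\Bcal_{\lambdabfm,\lambdabfm'}$ provides $N_{\jbfm,\jbfm'}\lesssim 2^{|\jbfm'|_1}\Bcal_{\lambdabfm,\lambdabfm'}^2$. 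Whenever $\Bcal_{\lambdabfm,\lambdabfm'}$ is in its active branch $2^{-(\mx+\my)/2}2^{\xi(r/2-|\jbfm-\jbfm'|_\infty)}$, the identity $|\jbfm'|_1-\mx-\my=\max\{0,\jx'-\jx\}+\max\{0,\jy'-\jy\}\leq|\jbfm-\jbfm'|_1$ reduces the subsequent summation to exactly the one carried out in the proof of Theorem~\ref{thm:complexity_first}, yielding the desired $\Ocal(r2^r)$ contribution.

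The main obstacle is the regime where the first compression is inactive, i.e., $\Bcal_{\lambdabfm,\lambdabfm'}=2^{-\minm}$. In this regime the mixed-field hypothesis together with $\delta<2^{-\minm}$ forces $\minm=\mx<\my$ and $\deltay<2^{-\mx}$, which sharpens the $\lambday'$-count to $\lesssim 2^{\jy'}\min\{2^{-\mx},\Dparam{\lambdabfm}{y}\}$. A short case distinction, according to which of $2^{-\mx}$ and $\Dparam{\lambdabfm}{y}$ is smaller, together with the level restriction $|\jbfm-\jbfm'|_\infty\geq r/2-(\my-\mx)/(2\xi)$ implicit in the dominance of the $2^{-\minm}$-branch, reduces the remaining contribution to a sum of the form $\sum 2^{\max\{0,\jx'-\jx\}+\max\{0,\jy'-\jy\}+\theta(r/2-|\jy-\jy'|)}$, which can be bounded by a multiple of $r2^r$ exactly as in Theorem~\ref{thm:compression_error_mix_y}, exploiting $\theta<1$.
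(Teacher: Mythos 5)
Your counting of the $\lambdax'$-direction is too weak, and the gap is not repaired by the first compression. You bound the number of admissible $\lambdax'$ solely from $\deltax(\lambdabfm,\lambdabfm')\le 2^{-\mx}$, giving $\lesssim 2^{\max\{0,\jx'-\jx\}}$ choices. Since the diagonal cut-off allows $|\jx-\jx'|$ to run up to $\alpha\frac{d-q}{\gamma-q}r > r$ (recall $\alpha>1$ and $\gamma<d$), the corresponding summand can be of size $2^{|\jx-\jx'|}\gg 2^{r/2}$, and the sum over $\jbfm'$ then exceeds $\Ocal(r2^r)$. The paper's proof of Theorem~\ref{thm:complexity_mixed_previous} splits on $|\jx-\jx'|\lessgtr r/2$: for $|\jx-\jx'|\ge r/2$ it exploits the mixed second compression of Theorem~\ref{thm:mixed_second_previous}, whose cut-off $\Eparam{\jbfm}{x}=2^{-\mx}2^{r/2-|\jx-\jx'|}$ caps the $\lambdax'$-count at $2^{\jx'}\Eparam{\jbfm}{x}\le 2^{r/2}$ per level pair, after which the sum $2^{(1+\theta)r/2}\sum_{|\jx-\jx'|\ge r/2}\sum_{|\jy-\jy'|\le r/2}2^{(1-\theta)|\jy-\jy'|}\lesssim r2^r$ closes. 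You never invoke this $\sigmax$-constraint, and without it the count is simply not $\Ocal(r2^r)$.

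Your detour through the first compression does not substitute for this. In the regime $\Bcal_{\jbfm,\jbfm'}=2^{-\minm}$ you correctly derive $\minm=\mx<\my$ and $\deltay<2^{-\mx}$; but this sharpens only the $\lambday'$-count, not the $\lambdax'$-count, which is what blows up. The derived lower bound $|\jbfm-\jbfm'|_\infty\ge r/2-(\my-\mx)/(2\xi)$ becomes vacuous when $\my-\mx$ is large, so it does not confine $|\jx-\jx'|$. Finally, the object you invoke at the end, ``bounded by a multiple of $r2^r$ exactly as in Theorem~\ref{thm:compression_error_mix_y}'', is the \emph{error} estimate for the mixed compression, not a complexity count, and the displayed sum $\sum 2^{\max\{0,\jx'-\jx\}+\max\{0,\jy'-\jy\}+\theta(r/2-|\jy-\jy'|)}$ diverges over the admissible $\jx'$-range. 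The fix is to explicitly use the $\Eparam{\jbfm}{x}$-cutoff from Section~\ref{sec:mixed_compression} for $|\jx-\jx'|\ge r/2$, exactly as the paper does.
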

\begin{proof}
	We know that only entries, for which both,
	\(\sigmax(\lambdabfm,\lambdabfm') \leq \Eparam{\lambdabfm}{x}\)
	and
	\(\deltay(\lambdabfm,\lambdabfm') \leq \Dparam{\lambdabfm}{y}\), 
	remain in the compressed matrix.
	Moreover, we only need to consider the entries which satisfy
	\(|\jy -\jy'| \leq \nicefrac{r}{2}\), 
	since otherwise, in view of \eqref{eq:cutoff_mixed_field_y_far},
	there holds \(\Dparam{\jbfm}{y} = 2^{-\my(\jbfm,\jbfm')}\) and these
	entries are trivial as \(\deltay(\lambdabfm,\lambdabfm') \geq
	2^{-\my(\lambdabfm,\lambdabfm')}\).
	Hence, \eqref{eq:cutoff_mixed_field_y_far} also tells us that
	\(\Dparam{\jbfm}{y} = 2^{-\my(\jbfm,\jbfm')}
	2^{\theta(\nicefrac{r}{2} - \norms{\jy - \jy'})}\),
	where \(\nicefrac{1}{2} < \theta < 1\).

	In what follows, we will also need to make a distinction in 
	\(|\jx -\jx'|\). 
	First, if \(|\jx -\jx'| \leq \nicefrac{r}{2}\),
	we do not need to perform a second compression.
	The number of remaining entries can therefore be estimated by
	\begin{align*}
		&\sum_{|\jx -\jx'| \leq \frac{r}{2}}
		\sum_{|\jy -\jy'| \leq \frac{r}{2}}
		2^{|\jbfm'|_1} 2^{-\mx(\jbfm,\jbfm')} \Dparam{\jbfm}{y}\\
		&\qquad\qquad
		= 2^{\theta \frac{r}{2}}
		\sum_{|\jx -\jx'| \leq \frac{r}{2}} 2^{|\jx -\jx'|}
		\sum_{|\jy -\jy'| \leq \frac{r}{2}}
		2^{(1-\theta)|\jy -\jy'|}\\
		&\qquad\qquad
		\lesssim 2^{r},
	\end{align*}
	since \(\theta < 1\).

	On the other hand, if \(|\jx -\jx'| \geq \nicefrac{r}{2}\), 
	we perform a second compression as well
	with the parameter \(\Eparam{\jbfm}{x} =
	2^{-\mx(\jbfm,\jbfm')} 2^{\nicefrac{r}{2} - \norms{\jx - \jx'}}\)
	from \eqref{eq:Eparam_previous}.
	Hence, the number of nontrivial entries is bounded by
	\begin{align*}
		&\sum_{\frac{r}{2} \leq |\jx -\jx'| \leq 
		\alpha r \frac{\bar{s}}{\nu}} \
		\sum_{|\jy -\jy'| \leq \frac{r}{2}}
		2^{|\jbfm'|_1} \Eparam{\jbfm}{x} \Dparam{\jbfm}{y}\\
		&\qquad\qquad
		= 2^{(1 + \theta)\frac{r}{2}}
		\sum_{\frac{r}{2} \leq |\jx -\jx'| \leq 
		\alpha r \frac{\bar{s}}{\nu}} \
		\sum_{|\jy -\jy'| \leq \frac{r}{2}}
		2^{(1-\theta)|\jy -\jy'|}\\
		&\qquad\qquad
		\lesssim r 2^{r}.
	\end{align*}
\end{proof}

Finally, we also want to estimate the number of nontrivial entries 
in the near-field, meaning that both \(\deltax(\lambdabfm,\lambdabfm')
\leq 2^{-\mx(\lambdabfm,\lambdabfm')}\) and \(\deltay(\lambdabfm,\lambdabfm')
\leq 2^{-\my(\lambdabfm,\lambdabfm')}\) hold.
In view of Theorem \ref{thm:compression_scheme_second_combined},
we again need to distinguish between three cases, 
where we shall start with the easiest one.

\begin{lemma}
	\label{lm:complexity_near_second}
	Assume (with a slight abuse of notation) that 
	\((|\jbfm|_{\infty},\, |\jbfm'|_{\infty}) 
	= \linebreak (\Mx(\jbfm,\jbfm'),\, \my(\jbfm,\jbfm'))\),
	\((|\jbfm|_{\infty},\, |\jbfm'|_{\infty}) 
	= (\mx(\jbfm,\jbfm'),\, \My(\jbfm,\jbfm'))\),
	or \((|\jbfm|_{\infty},\, |\jbfm'|_{\infty}) 
	= \linebreak (\Mx(\jbfm,\jbfm'),\, \My(\jbfm,\jbfm'))\).
	Then, the number of nontrivial near-field entries in the \(\lambdabfm\)-th row or
	column is of order \(r^2 2^r\).
\end{lemma}
\begin{proof}
	Fix \(\lambdabfm \in \nablabfm_{\jbfm}\).
	If \(|\jbfm -\jbfm'|_1 \leq r\), 
	we do not need to perform a second compression,
	since in this case, the number of nontrivial near-field entries entries 
	in the \(\lambdabfm\)-th row or column is trivially bounded by 
	\(\Ocal(r 2^r)\).

	On the other hand, if \(|\jbfm -\jbfm'|_1 \geq r\), we perform a second
	compression in both directions. 
	As the parameter in Theorem \ref{thm:near_field_previous} is symmetric
	in the current case,
	we can control the number of nontrivial near-field entries
	in the \(\lambdabfm\)-th row or column by
	\begin{align*}
		\sum_{|\jbfm -\jbfm'|_1 \geq r} 2^{|\jbfm'|_1} 
		\Fparam{\jbfm}{x}\Fparam{\jbfm}{y}
		&= \sum_{|\jbfm -\jbfm'|_1 \geq r} 2^{|\jbfm'|_1} 
		2^{-\my(\jbfm,\jbfm') -\my(\jbfm,\jbfm')} 2^{r -
		\norms{\jbfm-\jbfm'}_1} \\
		&\lesssim 2^{r} \sum_{|\jbfm -\jbfm'|_1 \geq r}
		1.
	\end{align*}
	Since we only need to consider entries with \(\norms{\jbfm -
	\jbfm'}_{\infty} \leq \alpha r \nicefrac{\bar{s}}{\nu}\),
	the above sum can be asymptotically bounded by \(r^2 2^r\).
\end{proof}

\begin{lemma}
	\label{lm:complexity_near_first}
	Assume that \((|\jbfm|_{\infty},\, |\jbfm'|_{\infty}) = (\jx,\, \jx')\).
	Then, the number of nontrivial near-field entries in the \(\lambdabfm\)-th row or
	column is of order \(r^2 2^r\).
\end{lemma}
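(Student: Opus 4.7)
The plan is to mirror the counting argument used in the proof of Lemma \ref{lm:complexity_near_second}, but to be more careful about the four subcases determined by whether \(|\jx-\jx'|\) and \(|\jy-\jy'|\) exceed \(\frac{r}{2}\) — since in the case \((|\jbfm|_\infty,|\jbfm'|_\infty) = (\jx,\jx')\) the cutoff parameters \(\Fparam{\jbfm}{x}\) and \(\Fparam{\jbfm}{y}\) both have the ``one-direction'' form \(2^{-\mx}2^{\frac{r}{2}-|\jx-\jx'|}\) and \(2^{-\my}2^{\frac{r}{2}-|\jy-\jy'|}\), respectively, which become ineffective (\(\geq 2^{-\mx}\) resp.\ \(2^{-\my}\)) unless the corresponding level difference is large.

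Fix \(\lambdabfm\in\nablabfm_{\jbfm}\). For each admissible \(\jbfm'\), I intend to bound the number of surviving \(\lambdabfm'\in\nablabfm_{\jbfm'}\) by \(2^{|\jbfm'|_1} A_{\jbfm,\jbfm'}\), where \(A_{\jbfm,\jbfm'}\) is the product of the allowed x-range and y-range around \(\Omega_{\lambdabfm}\). For surviving near-field entries, the x-range is bounded by \(\max\{2^{-\mx},\Fparam{\jbfm}{x}\}\) and the y-range by \(\max\{2^{-\my},\Fparam{\jbfm}{y}\}\), giving
\[
2^{|\jbfm'|_1}A_{\jbfm,\jbfm'} \lesssim 2^{|\jbfm'|_1-\mx-\my}\max\{1,2^{\frac{r}{2}-|\jx-\jx'|}\}\max\{1,2^{\frac{r}{2}-|\jy-\jy'|}\}.
\]
The key identity is
\[
|\jbfm'|_1-\mx-\my = \max\{0,\jx'-\jx\}+\max\{0,\jy'-\jy\} \leq |\jbfm-\jbfm'|_1,
\]
and a direct case analysis (whether each of \(|\jx-\jx'|\) and \(|\jy-\jy'|\) is below or above \(\frac{r}{2}\)) shows that in every subcase the bound reduces to \(\lesssim 2^r\) per level pair \(\jbfm'\). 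Indeed, when \(|\jx-\jx'|\geq\frac{r}{2}\) and \(\jx\leq\jx'\) the factor \(2^{|\jx-\jx'|}\) produced by \(\max\{0,\jx'-\jx\}\) is exactly cancelled by \(2^{\frac{r}{2}-|\jx-\jx'|}\), leaving \(2^{r/2}\); when \(\jx\geq\jx'\) the first factor vanishes and the x-contribution is even smaller; and when \(|\jx-\jx'|\leq\frac{r}{2}\) the x-contribution is itself bounded by \(2^{r/2}\). The symmetric argument applies to the y-direction, so the product is \(\lesssim 2^r\).

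Summing this uniform per-level bound over the \(\Ocal(r^2)\) admissible multiindices \(\jbfm'\) with \(|\jbfm-\jbfm'|_\infty \leq \alpha\frac{d-q}{\gamma-q} r\) (cf.\ the diagonal cutoff from Theorem \ref{thm:diagonal_cutoff}) yields the claimed bound \(\Ocal(r^2 2^r)\) on the row count; the column bound follows by exchanging the roles of \(\lambdabfm\) and \(\lambdabfm'\). The main obstacle I expect is bookkeeping: one has to distinguish the four subcases and keep track of the signs of \(\jx'-\jx\) and \(\jy'-\jy\) when comparing \(|\jbfm'|_1-\mx-\my\) with \(|\jbfm-\jbfm'|_1\), but no new decay estimates on the matrix entries themselves are needed — all the work was already done in Theorem \ref{thm:near_field_previous}.
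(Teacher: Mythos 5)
Your approach is essentially the one the paper takes: for each admissible level pair $\jbfm'$ you bound the number of surviving near-field entries by the ``volume'' $2^{|\jbfm'|_1}$ times the product of the allowed ranges in the two coordinate directions, observe a per-level bound $\lesssim 2^r$, and sum over the $\Ocal(r^2)$ level pairs retained after the diagonal cutoff. The paper's proof is the same calculation, simply organised as an explicit four-way split on $|\jx-\jx'|\lessgtr r/2$ and $|\jy-\jy'|\lessgtr r/2$.

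There is, however, a slip in your write-up that you should fix. The surviving x-range is \emph{not} $\max\{2^{-\mx},\Fparam{\jbfm}{x}\}$. The drop condition in \eqref{eq:compression_scheme_second_x} is $2^{-\mx(\lambdabfm,\lambdabfm')}\geq\sigmax(\lambdabfm,\lambdabfm')\geq\Fparam{\lambdabfm}{x}$, so in the near-field, where $\sigmax\lesssim 2^{-\mx}$ to begin with, the entries that \emph{survive} satisfy $\sigmax<\Fparam{\lambdabfm}{x}$; hence the surviving x-range is $\min\{2^{-\mx},\Fparam{\jbfm}{x}\}$, and likewise for y. Your displayed estimate should therefore read $\min\{1,2^{\frac{r}{2}-|\jx-\jx'|}\}\min\{1,2^{\frac{r}{2}-|\jy-\jy'|}\}$ in place of the two $\max$'s. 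As written, with $\max$, the right-hand side equals $2^{|\jbfm'|_1-\mx-\my}$ whenever both level differences exceed $r/2$, which can be as large as $2^{2\alpha\frac{d-q}{\gamma-q}r}$ and certainly does not reduce to $\lesssim 2^r$. That said, the cancellation you describe in the case analysis, $2^{|\jx-\jx'|}\cdot 2^{\frac{r}{2}-|\jx-\jx'|}=2^{r/2}$ for $|\jx-\jx'|\geq r/2$ and $\jx\leq\jx'$, only occurs with the $\min$ version, so you clearly had the correct mechanism in mind; once the $\max$ is replaced by $\min$, the per-level bound $\lesssim 2^r$ holds in all four subcases and the argument goes through as the paper's does.
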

\begin{proof}
	We first remark that if \(|\jbfm -\jbfm'|_{\infty} \leq
	\nicefrac{r}{2}\), the statement of the theorem is trivially true.

	If \(|\jx -\jx'| \geq \nicefrac{r}{2}\) and \(|\jy -\jy'| \leq
	\nicefrac{r}{2}\), we perform a second compression in
	\(\mathrm{x}\) only
	with the parameter \(\Fparam{\jbfm}{x} =
	2^{-\mx(\jbfm,\jbfm')} 2^{\nicefrac{r}{2} - \norms{\jx - \jx'}}\) from
	Theorem \ref{thm:near_field_previous}. 
	Therefore, the number of nontrivial entries on level \(\jbfm'\) is bounded by
	\begin{equation*}
		N_{\lambdabfm,\jbfm'} \lesssim 2^{|\jbfm'|_1} 2^{-\my(\jbfm,\jbfm')}
		\Fparam{\jbfm}{x}
		\lesssim 2^{\frac{r}{2}} 2^{|\jy -\jy'|},
	\end{equation*}
	so summing over all such \(\jbfm'\) yields the estimate
	\begin{equation*}
		\sum_{\frac{r}{2} \leq |\jx -\jx'| \leq 
		\alpha r \frac{\bar{s}}{\nu}} \
		\sum_{|\jy -\jy'| \leq \frac{r}{2}} 
		N_{\lambdabfm,\jbfm'} \lesssim r 2^r.
	\end{equation*}

	If \(|\jx -\jx'| \leq \nicefrac{r}{2}\) and 
	\(|\jy -\jy'| \geq \nicefrac{r}{2}\),
	we perform a second compression in \(\mathrm{y}\) only. 
	Here, we need to exchange the indices in the cutoff parameter
	from Theorem \ref{thm:near_field_previous}, i.e., 
	we use the parameter 
	\(\Fparam{\jbfm}{y} = 2^{-\my(\jbfm,\jbfm')} 
	2^{\nicefrac{r}{2} - \norms{\jy - \jy'}}\). 
	This results in
	\begin{align*}
		N_{\lambdabfm,\jbfm'} \lesssim 2^{|\jbfm'|_1} 2^{-\mx(\jbfm,\jbfm')}
		\Fparam{\jbfm}{y}
		&\lesssim 2^{\frac{r}{2}} 2^{|\jx -\jx'|},
	\end{align*}
	which sums up in accordance with
	\begin{align*}
		\sum_{|\jx  - \jx'| \leq \frac{r}{2}} \
		\sum_{\frac{r}{2} \leq |\jy - \jy'| \leq 
		\alpha r \frac{\bar{s}}{\nu}} 
		N_{\lambdabfm,\jbfm'}
		\lesssim 2^{\frac{r}{2}} \sum_{|\jx - \jx'| \leq \frac{r}{2}} \
		\sum_{\frac{r}{2} \leq |\jy - \jy'| \leq 
		\alpha r \frac{\bar{s}}{\nu}} 2^{|\jx - \jx'|}
		\lesssim r 2^r.
	\end{align*}

	Finally, if both \(|\jx -\jx'| \geq \nicefrac{r}{2}\) and 
	\(|\jy -\jy'| \geq \nicefrac{r}{2}\), 
	we perform a second compression in both coordinate directions, 
	and therefore,
	\begin{align*}
		N_{\lambdabfm,\jbfm'} \lesssim 2^{|\jbfm'|_1} 
		\Fparam{\jbfm}{x}\Fparam{\jbfm}{y}
		&\lesssim 2^{r},
	\end{align*}
	which sums up to
	\begin{align*}
		\sum_{\frac{r}{2} \leq |\jx - \jx'| \leq 
		\alpha r \frac{\bar{s}}{\nu}} \
		\sum_{\frac{r}{2} \leq |\jy - \jy'| \leq 
		\alpha r \frac{\bar{s}}{\nu}} N_{\lambdabfm,\jbfm'}
		\lesssim 2^{r} 
		\sum_{\frac{r}{2} \leq |\jx - \jx'| \leq 
		\alpha r \frac{\bar{s}}{\nu}} \
		\sum_{\frac{r}{2} \leq |\jy - \jy'| \leq 
		\alpha r \frac{\bar{s}}{\nu}} 1
		\lesssim r^2 2^r
	\end{align*}
	as well.
\end{proof}

The same calculations, but interchanged coordinate directions, show us also the
following:
\begin{corollary}
	\label{cor:complexity_near_third}
	Assume that \((|\jbfm|_{\infty}, |\jbfm'|_{\infty}) = (\jy,\jy')\).
	Then, the number of nontrivial near-field entries in the \(\lambdabfm\)-th row or
	column is of order \(r^2 2^r\).
\end{corollary}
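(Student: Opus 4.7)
The plan is to obtain this corollary directly by mirroring the argument of Lemma \ref{lm:complexity_near_first} with the roles of the coordinate directions \(\mathrm{x}\) and \(\mathrm{y}\) interchanged. Under the assumption \((|\jbfm|_{\infty}, |\jbfm'|_{\infty}) = (\jy,\jy')\), Theorem \ref{thm:near_field_previous} still gives \(\Fparam{\jbfm}{x} = 2^{-\mx(\jbfm,\jbfm')} 2^{r/2 - |\jx -\jx'|}\), which has the same functional form as in the case \((\jx,\jx')\); and the \(\mathrm{y}\)-analogue (defining \(\Lbfm^{(2,\mathrm{y})}_r\)) satisfies \(\Fparam{\jbfm}{y} = 2^{-\my(\jbfm,\jbfm')} 2^{r/2 - |\jy-\jy'|}\). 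Consequently, the second compression in direction \(\mathrm{x}\) genuinely reduces the count only when \(|\jx-\jx'| > r/2\), and the one in direction \(\mathrm{y}\) only when \(|\jy-\jy'| > r/2\). True near-field entries remaining in \(\Lbfm_r^{(2)}\) for a fixed \(\lambdabfm\) and a fixed pair of levels \((\jbfm,\jbfm')\) are therefore bounded in number by \(2^{|\jbfm'|_1}\) times the product of the effective \(\mathrm{x}\)- and \(\mathrm{y}\)-cutoffs.

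I would then perform the same four-way split used in Lemma \ref{lm:complexity_near_first}. If \(|\jbfm-\jbfm'|_\infty \le r/2\), no second compression is triggered in either direction, and a direct count yields \(\lesssim r^2\,2^r\). If \(|\jy-\jy'| \ge r/2\) and \(|\jx-\jx'| \le r/2\), only the \(\mathrm{y}\)-compression acts and we find \(N_{\lambdabfm,\jbfm'} \lesssim 2^{|\jbfm'|_1} 2^{-\mx(\jbfm,\jbfm')} \Fparam{\jbfm}{y} \lesssim 2^{r/2} 2^{|\jx-\jx'|}\), which sums to \(\lesssim r\,2^r\). The symmetric subcase \(|\jx-\jx'| \ge r/2\), \(|\jy-\jy'| \le r/2\) is handled identically via the \(\mathrm{x}\)-compression. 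Finally, if both level differences exceed \(r/2\), both compressions act and one obtains \(N_{\lambdabfm,\jbfm'} \lesssim 2^{|\jbfm'|_1} \Fparam{\jbfm}{x} \Fparam{\jbfm}{y} \lesssim 2^r\), summing to \(\lesssim r^2\,2^r\) over the \(\Ocal(r^2)\) admissible level pairs left after the diagonal cutoff \eqref{eq:compression_diagonal_cutoff}.

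No genuine obstacle arises here: the entire calculation is formally identical to that of Lemma \ref{lm:complexity_near_first} once one verifies that the cutoff parameters in the case \((\jy,\jy')\) are structurally the mirror image of those in the case \((\jx,\jx')\). The only book-keeping point worth checking is that the \(\mathrm{y}\)-compression scheme \(\Lbfm^{(2,\mathrm{y})}_r\), obtained from \eqref{eq:compression_scheme_second_x} by exchanging \(\mathrm{x}\) and \(\mathrm{y}\), indeed uses the symmetrically defined \(\Fparam{\jbfm}{y}\); this is built into Theorem \ref{thm:compression_scheme_second_combined}.
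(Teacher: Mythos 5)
Your proposal is correct and matches the paper's approach exactly: the paper's entire ``proof'' of Corollary~\ref{cor:complexity_near_third} is the remark preceding it (``the same calculations, but interchanged coordinate directions''), and you have spelled out precisely that interchange, including the key book-keeping observation that $\Fparam{\jbfm}{x}$ (and its $\mathrm{y}$-analogue) take the same functional form in the case $(|\jbfm|_{\infty},|\jbfm'|_{\infty})=(\jy,\jy')$ as in $(\jx,\jx')$, so the four-way level-difference split of Lemma~\ref{lm:complexity_near_first} carries over verbatim.
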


By combining Lemmata \ref{lm:complexity_near_second},
\ref{lm:complexity_near_first}, and Corollary \ref{cor:complexity_near_third},
we finally arrive at the following theorem:

\begin{theorem}
	\label{thm:complexity_near_previous}
	Consider all entries such that
	\begin{align*}
		\deltax(\lambdabfm,\lambdabfm') \leq 2^{-\mx(\lambdabfm,\lambdabfm')},
		\qquad
		\deltay(\lambdabfm,\lambdabfm') \leq 2^{-\my(\lambdabfm,\lambdabfm')}.
	\end{align*}
	Then, the number of entries is bounded by \(\Ocal(r^2 2^r)\).
\end{theorem}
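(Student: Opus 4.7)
The plan is to partition the set of near-field index pairs $(\lambdabfm,\lambdabfm')$ (those with $\deltax(\lambdabfm,\lambdabfm') \leq 2^{-\mx}$ and $\deltay(\lambdabfm,\lambdabfm') \leq 2^{-\my}$) into subsets according to which coordinate direction attains the $\ell^\infty$-norm of $\jbfm$ and of $\jbfm'$, and then to apply one of the three preceding complexity estimates to each subset.

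First, I would fix $\lambdabfm \in \nablabfm_\jbfm$ and note that, for every $\lambdabfm' \in \nablabfm$ contributing a nontrivial entry to the $\lambdabfm$-th row or column after compression, the pair $(|\jbfm|_\infty,|\jbfm'|_\infty)$ must fall into exactly one of four configurations (ties broken arbitrarily): $(\jx,\jx')$, $(\jy,\jy')$, $(\jx,\jy')$, or $(\jy,\jx')$. This is simply because each $|\cdot|_\infty$ is attained in one coordinate direction.

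Next, I would dispatch each configuration to the corresponding result already proved. The configuration $(\jx,\jx')$ is handled directly by Lemma~\ref{lm:complexity_near_first}, giving $\Ocal(r^2 2^r)$ entries; the configuration $(\jy,\jy')$ is handled by Corollary~\ref{cor:complexity_near_third}, with the same bound. The two remaining ``crossed'' configurations $(\jx,\jy')$ and $(\jy,\jx')$ are exactly the situations captured, up to reflection of the coordinate roles, by Lemma~\ref{lm:complexity_near_second}, which again delivers an $\Ocal(r^2 2^r)$ bound. The symmetric variant of that lemma (obtained by swapping $\mathrm{x} \leftrightarrow \mathrm{y}$ in the hypothesis and in the compression rule of Theorem~\ref{thm:near_field_previous}) takes care of the other crossed case.

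Summing the four contributions yields the claimed bound $\Ocal(r^2 2^r)$ on the total count of nontrivial near-field entries in any fixed row or column. There is no real analytic obstacle here: the content is entirely in the three sublemmata, and the present theorem only records a case distinction together with the symmetry in the coordinate directions. The only bookkeeping point worth verifying is that the four enumerated cases exhaust all possibilities and that the symmetric statement corresponding to Lemma~\ref{lm:complexity_near_second} applies verbatim to the configuration $(\jy,\jx')$; both are immediate from the construction of $\Lbfm_r^{(2)}$ in Theorem~\ref{thm:compression_scheme_second_combined}.
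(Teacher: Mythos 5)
Your proof is correct and takes essentially the same route as the paper: Theorem~\ref{thm:complexity_near_previous} is obtained by combining Lemmata~\ref{lm:complexity_near_second}, \ref{lm:complexity_near_first} and Corollary~\ref{cor:complexity_near_third}, and your explicit four-way case distinction on $(|\jbfm|_\infty,|\jbfm'|_\infty)$, with the fourth crossed case handled by the $\mathrm{x}\leftrightarrow\mathrm{y}$ symmetry already built into $\Lbfm_r^{(2)}$, is precisely the bookkeeping the paper leaves implicit.
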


By combining the complexity estimates with the error estimates 
earlier in this section, we can state our main theorem.

\begin{theorem}
	\label{thm:s_star_compressibility}
	Consider a continuous and elliptic operator 
	\(\Lcal\colon H^{q}(\square) \to H^{-q}(\square)\) with
	an asymptotically smooth kernel.
	Then, the operator matrix corresponding to a wavelet basis of order
	\(d\) with regularity \(\gamma\) and \(\tilde{d}\) vanishing moments
	is \(s^{\star}\)-compressible with
	\begin{align*}
		s^{\star} = \min\left\{\sigma,\ 
			\alpha \bar{s},\
			\frac{\tilde{d}}{2} + q - \max\left\{0, \ q\alpha
			\frac{\bar{s}}{\nu(\gamma, q)} \right\}
		\right\}.
	\end{align*}
\end{theorem}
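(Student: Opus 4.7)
The plan is to stitch together the four compression steps developed in Sections~4.1--4.4 via the triangle inequality, and then invoke the reindexing trick alluded to just after equation~\eqref{eq:definition_of_s_star} to convert the ``$r^2 \cdot 2^{-sr}$'' error bound with ``$r^2 \cdot 2^r$'' nonzeros per row into the summable $\alpha_r,\beta_r$ required by Definition~\ref{def:s_star_compressibility}.

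Concretely, I would fix $\alpha > 1$ and $\xi,\theta \in (\tfrac12,1)$ as in the preceding subsections and set $\Lbfm_r \isdef \Lbfm_r^{(2)}$. By the telescoping decomposition
\[
\Lbfm - \Lbfm_r
= \bigl(\Lbfm - \Lbfm_r^{(0)}\bigr)
+ \bigl(\Lbfm_r^{(0)} - \Lbfm_r^{(1)}\bigr)
+ \bigl(\Lbfm_r^{(1)} - \Lbfm_r^{(\mathrm{mix})}\bigr)
+ \bigl(\Lbfm_r^{(\mathrm{mix})} - \Lbfm_r^{(2)}\bigr),
\]
Theorems~\ref{thm:diagonal_cutoff}, \ref{thm:first_compression_error}, \ref{thm:compression_error_mix_combined} and \ref{thm:compression_scheme_second_combined} apply in turn. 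The first term contributes $2^{-sr}$ for every $s < \min\{\sigma, \alpha(d-q)\}$, the two middle terms contribute at most $r^2 2^{-\frac{\tilde d + q}{2}r}$, and the last term contributes $2^{-sr}$ for every $s < \frac{\tilde d}{2} + q - \max\bigl\{0, q\alpha\frac{d-q}{\gamma-q}\bigr\}$. Choosing $s$ strictly smaller than $s^\star$ as defined in~\eqref{eq:definition_of_s_star} and absorbing the $r^2$ into a slightly weaker exponential decay yields the row/column-wise bound $\|\Lbfm - \Lbfm_r\|_2 \lesssim r^2 \, 2^{-sr}$ for all such $s$.

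For the sparsity count, I would combine the complexity estimates for the far-field (Theorem~\ref{thm:complexity_first}), the mixed field (Theorem~\ref{thm:complexity_mixed_previous}), and the near-field (Theorem~\ref{thm:complexity_near_previous}): each of them contributes at most $\Ocal(r^2 2^r)$ nontrivial entries per row and per column of $\Lbfm_r$, so the same holds for their union. Together with the diagonal cutoff from Theorem~\ref{thm:diagonal_cutoff}, which restricts $|\jbfm - \jbfm'|_\infty \lesssim r$, every row and every column of $\Lbfm_r$ has at most a constant multiple of $r^2 2^r$ nonzero entries.

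It remains to turn the pair $\bigl(\|\Lbfm - \Lbfm_r\|_2, \#\text{nonzeros}\bigr) = \bigl(\Ocal(r^2 2^{-sr}), \Ocal(r^2 2^r)\bigr)$ into the summable sequences required by Definition~\ref{def:s_star_compressibility}. I would choose a summable sequence, for instance $\alpha_r \isdef r^{-(3+\epsilon)}$ with $\epsilon > 0$, and define $\tilde\Lbfm_r \isdef \Lbfm_{\lceil r + \log_2(1/\alpha_r)\rceil}$; then the number of nonzeros per row or column of $\tilde\Lbfm_r$ is bounded by a constant multiple of $\alpha_r 2^r$, while the error decays like $\beta_r 2^{-sr}$ for another summable sequence $(\beta_r)_r$, for any $s < s^\star$. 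The only subtlety to verify here is that the substitution $r \mapsto r + \Ocal(\log r)$ preserves the defining inequalities in the compression scheme (because the cutoff parameters $\Bcal, \Dcal, \Ecal, \Fcal$ depend on $r$ only through $2^{r}$), which is routine. The main conceptual obstacle is not in this final rearrangement but upstream: checking that the mixture of the three regimes (diagonal cutoff, mixed field, near field) leaves no ``uncovered'' index pairs $(\lambdabfm, \lambdabfm')$, a point that is guaranteed by the case distinction on which of $\deltax, \deltay$ vs.\ $2^{-\mx}, 2^{-\my}$ dominates and which of $\jx,\jy,\jx',\jy'$ realises $|\jbfm|_\infty, |\jbfm'|_\infty$; listing these cases once and appealing to the matching theorem for each completes the proof.
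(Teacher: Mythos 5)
Your plan reproduces the paper's (implicit) argument: a telescoping decomposition through the intermediate matrices $\Lbfm_r^{(0)}, \Lbfm_r^{(1)}, \Lbfm_r^{(\mathrm{mix})}, \Lbfm_r^{(2)}$, combining the error bounds of Theorems~\ref{thm:diagonal_cutoff}, \ref{thm:first_compression_error}, \ref{thm:compression_error_mix_combined}, \ref{thm:compression_scheme_second_combined} with the complexity bounds of Theorems~\ref{thm:complexity_first}, \ref{thm:complexity_mixed_previous}, \ref{thm:complexity_near_previous}, and then converting the resulting $\bigl(\Ocal(r^2 2^{-sr}),\, \Ocal(r^2 2^r)\bigr)$ pair into the summable sequences of Definition~\ref{def:s_star_compressibility} by a logarithmic reindexing. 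This is exactly what the paper does, and your observation that the middle terms $r 2^{-\frac{\tilde d + q}{2}r}$ and $r^2 2^{-\frac{\tilde d + q}{2}r}$ are dominated by $2^{-s^\star r}$ (because $\frac{\tilde d + q}{2}\ge s^\star$ after checking both signs of $q$) is the right accounting.

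There is, however, a concrete error in the final reindexing step: you set $\tilde\Lbfm_r \isdef \Lbfm_{\lceil r + \log_2(1/\alpha_r)\rceil}$ with $\alpha_r = r^{-(3+\epsilon)}$, which \emph{increases} the index to $r' = r + (3+\epsilon)\log_2 r$. The nonzero count of $\Lbfm_{r'}$ is then $\Ocal\bigl((r')^2 2^{r'}\bigr) \sim r^{5+\epsilon} 2^r$, which is \emph{worse}, not better, than $\alpha_r 2^r = r^{-(3+\epsilon)} 2^r$. The shift must go the other way: $\tilde\Lbfm_r \isdef \Lbfm_{\lceil r + \log_2(\alpha_r)\rceil}$, i.e.\ $r' = r - (3+\epsilon)\log_2 r$, so that $(r')^2 2^{r'} \lesssim r^{-(1+\epsilon)} 2^r$, which is a summable multiple of $2^r$. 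The accompanying degradation of the error bound, $2^{-sr'} = r^{s(3+\epsilon)} 2^{-sr}$, is then absorbed by using the slack $s < s^\star$: replacing $s$ by an intermediate $s' \in (s, s^\star)$ turns the polynomial factor into a summable one, $r^{s'(3+\epsilon)} 2^{-(s'-s)r}$. With this correction the argument closes; the rest of your proposal, including the check that the case distinctions on $\deltax, \deltay$ versus $2^{-\mx}, 2^{-\my}$ and on which of $\jx,\jy,\jx',\jy'$ realises the $\infty$-norms cover every index pair, is exactly what is needed.
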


\begin{remark}
	\label{rm:sparse_grid_rate}
	Let us remark again that, if \(\sigma\) is sufficiently large,
	we can always achieve \(s^{\star} > \bar{s}\) 
	by choosing a wavelet basis with sufficiently many vanishing moments.
	As a consequence, using the procedures of \cite{CDD01,CDD02}, 
	we may approximate \(u\) at the optimal rate
	\(N^{-\bar{s}}\),
	provided the relevant entries of the operator matrix \(\Lbfm\) are known 
	or the can be computed in \(\Ocal(N)\) computing time.
	We also note that this is the same rate which can be achieved by sparse
	grid approximation, cf.\ \cite{BG04,GH11,GK00,GOS99},
	provided the target function admits sufficient regularity.
\end{remark}

\begin{remark}
	\label{rm:number_of_van_mom_necessary}
	In order to reach the best possible rate,
	for the following, classical combinations of \(d\) and \(q\),
	the minimal number of vanishing moments is tabulated below:
	\begin{equation*}
		\begin{array}{c || c | c }
			& d = 1 & d = 2 \\
			\hline
			\hline
			2q = - 1 & 4 & 6 \\
			\hline
			2q = 0 & 3 & 5 \\
			\hline
			2q = 1 & \text{--} & 4 \\
		\end{array}
	\end{equation*}
	For these values, we have assumed that the wavelet bases have the maximally
	possible global smoothness, i.e., that \(\gamma = d -\nicefrac{1}{2}\). 
	Moreover, we note that using piecewise constant wavelets is not meaningful
	in the case \(2q = 1\). 
	Finally, for piecewise linear wavelets and \(2q = 1\), 
	we have assumed \(1 < \alpha < \nicefrac{4}{3}\).
\end{remark}

\begin{remark}
	If we just intend to receive the maximum convergence rate of an isotropic 
	discretisation \(N^{-\frac{d-q}{2}}\), which nevertheless resolves edge 
	singularities, we can slightly adjust the compression scheme. 
	If we compress in Theorem \ref{thm:diagonal_cutoff} 
	matrix entries satisfying \(|\jbfm -\jbfm'|_{\infty} > r\) 
	in \eqref{eq:compression_diagonal_cutoff}, 
	then the \(s^\star\)-compressibility is limited by 
	\(s^\star \leq \min\{\sigma,\, \gamma-q\}\). 
	This bound is also present in the isotropic case, see 
	\cite{Ste04}.

	In turn, this means that we only need to sum over
	all entries with \(|\jbfm-\jbfm'|_{\infty} \leq r\) in Lemma
	\ref{lm:near_field_previous_first}, by which we get the 
	additional bound \(s^\star \leq \nicefrac{\tilde{d}}{2} + q - \max\{0,q\}\). 
	Combined, this results in \(s^{\star} = \min\{ \sigma,\, \gamma-q,\,
	\nicefrac{\tilde{d}}{2} + q - \max\{0, q\} \}\).

	Compared to the isotropic case, the limiting condition 
	on \(s^{\star}\) is \(\nicefrac{\tilde{d}}{2}\) if \(q \geq 0\). This is larger 
	than the desired rate \(s^\star>\frac{d-q}{2}\) whenever \(\tilde{d} > d-q\).
	If \(q < 0\), then we need to have \(\frac{\tilde{d} + 2q}{2} > \frac{d-q}{2}\), 
	or equivalently, \(\tilde{d} > d-3q\). Therefore, wavelets are required 
	with \(\tilde{d} > \max\{d-q,\, d - 3q\}\) vanishing moments to ensure 
	\(s^\star>\frac{d-q}{2}\). This condition somewhat differs from 
	\cite{DHS06,HvR24} for the nonadaptive case, but is satisfied 
	for the typical wavelets used to discretise the classical 
	boundary integral operators.
\end{remark}

\section{On the Generalisation to Lipschitz Manifolds}
\label{sec:manifold}

In Section \ref{sec:compression_general}, we have constructed a compression 
scheme for operators with an asymptotically smooth kernel on the unit square. 
In many problems from engineering or physics, 
one however needs to consider an integral equation 
\(\mathcal{L}u = g\in H^{-q}(\Gamma)\) on the boundary of a domain  
\(\Omega \subset \Rbbb^3\) with Lipschitz boundary 
\(\Gamma \isdef\partial \Omega\), 
resulting in the use of the boundary element method \cite{SS11,Ste08}. 
As it was done for the nonadaptive situation in \cite{HvR24}, 
we will generalise the compression scheme from the unit 
square to a Lipschitz manifold.

\subsection{Surface Representation}

We assume that the boundary \(\Gamma\) can be decomposed into \(P\) four-sided, 
smooth patches \(\Gamma_i\), \(i = 1, \dots, P\). 
It is required that this decomposition is admissible, i.e., 
for \(i \neq i'\), the intersection \(\Gamma_i \cap \Gamma_{i'}\) 
is either empty or a common vertex or edge of both \(\Gamma_i\) and
\(\Gamma_{i'}\).
We further assume that, for each \(1 \leq i \leq P\), there exits a smooth 
diffeomorphism \(\tauvec_i \colon \square \to \Gamma_i\) such that 
there are constants \(0 < c_i \leq C_i < \infty\) with
\begin{align}
	\label{eq:weight_per_patch}
	c_i \leq \omega_i(\hat{\xvec}) 
	\isdef \sqrt{\Det \big(\Dbfm \tauvec_i(\hat{\xvec})^{\intercal}
	\Dbfm \tauvec_i(\hat{\xvec})\big)}
	\leq C_i, \qquad \hat{\xvec} \in \square.
\end{align}
Moreover, on a common edge of \(\Gamma_i\) and \(\Gamma_{i'}\), 
we assume that the two parametrisations \(\tauvec_i\) and 
\(\tauvec_{i'}\) coincide up to orientation.

Along the lines of \cite{SS11,Ste08}, we can introduce the Sobolev spaces
\(H^{s}(\Gamma)\) for \(|s| \leq s_{\Gamma}\), 
where \(s_{\Gamma}\) depends on the regularity of \(\Gamma\).
In the case of a \(C^{k,\alpha}\)-manifold, one has 
\(s_{\Gamma} = k + \alpha\), cf.\ \cite{Wlo87}, thus
we have \(s_{\Gamma} = 1\) for a Lipschitz manifold. 

We note that there holds \(|q| \leq \nicefrac{1}{2}\) for the 
classical boundary integral operators arising from 
second order partial differential equations. Hence, for 
each of them, the energy space \(H^{q}(\Gamma)\) can be 
characterised on a Lipschitz manifold. In particular, 
note that with \eqref{eq:weight_per_patch}, 
the inner product
\begin{align*}
	\langle u, v \rangle_{\Gamma} = \int_{\Gamma} u(\xvec) v(\xvec) \dint
	S_{\xvec}
	= \sum_{i=1}^{P} \int_{\square} u\big(\tauvec_i(\hat{\xvec})\big) 
	v\big(\tauvec_i(\hat{\xvec})\big) \omega_i(\hat{\xvec}) \dint \hat{\xvec}
\end{align*}
can be extended to the duality pairing in \(H^{-s}(\Gamma) \times
H^{s}(\Gamma)\) and that on each patch, up to a smooth weight 
function, it is the transported duality pairing on the unit square.

\subsection{Discontinuous Wavelet Bases}

If \(q < \nicefrac{1}{2}\), then the wavelet basis does not need to be 
continuous to ensure that \(\gamma > q\). 
In this case, the construction of a basis is rather straightforward.
Since every patch \(\Gamma_i\), \(1 \leq i \leq P\), 
is the image of the unit square under a smooth diffeomorphism, 
we may simply transport the wavelets onto the geometry.
In particular, a multiindex \(\lambdabfm = (i, \jbfm, \kbfm)\) now encodes 
information on the patch, the level, and the location of its support on the
respective patch.
With this, we define \(\psi_{\lambdabfm} \isdef \hat{\psi}_{\hat{\lambdabfm}} \circ
\tauvec_i^{-1}\), where \(\hat{\psi}_{\hat{\lambdabfm}}\) denotes 
the corresponding wavelet on the unit square for 
\(\hat{\lambdabfm} = (\jbfm,\kbfm)\).
Hence, if \(v \in H^{s}(\Gamma)\) for some \(s > -\gamma\), we have
\begin{align*}
	\big\langle \psi_{\lambdabfm}, v \big\rangle_{\Gamma} = 
	\int_{\Gamma_i} \psi_{\lambdabfm}(\xvec) v(\xvec) \dint S_{\xvec}
	= \int_{\square} \hat{\psi}_{\hat{\lambdabfm}}(\hat{\xvec})
	v\big(\tauvec_i(\hat{\xvec})\big) \omega_i(\hat{\xvec}) \dint \xvec.
\end{align*}
Since \(\tauvec_i\) and, consequently, also \(\omega_i\) are smooth,
the usual cancellation estimates can be concluded from those on the 
unit square in a straightforward manner.

\subsection{Continuous Wavelet Bases}

In contrast, if \(q \geq \nicefrac{1}{2}\), we need to consider a wavelet
basis of globally continuous functions.
In this case, we denote \(\lambdabfm \isdef (\Xi, \jbfm, \kbfm)\), 
where we require that the wavelet basis under consideration consists of
wavelets that satisfy one of the following conditions:
\begin{itemize}
	\item The wavelet is supported on precisely one vertex \(\xivec\) and the
		neighbouring patches. 
		In this case, we denote \(\Xi \isdef \xivec\).
	\item The wavelet is supported on precisely one edge \(\Sigma\) and
		the two neighbouring patches.
		In this case, we denote \(\Xi \isdef \Sigma\).
	\item The wavelet is supported on precisely one patch \(\Gamma_i\). 
		In this case, we denote \(\Xi \isdef i\).
\end{itemize}

For all of the wavelets involved, we assume that we have a 
patchwise cancellation property, meaning that all elementary 
wavelet estimates used on the unit square carry over to the 
manifold:
\begin{assumption}
	\label{ass:patchwise_cancellation_property}
	For \(\jbfm \geq \jbfm_0\) and \(\lambdabfm \in \nablabfm_{\jbfm}\)
	let us denote \(\hat{\psi}_{\hat{\lambdabfm}}^{(i)} \isdef
	\psi_{\lambdabfm} \circ \tauvec_i\).
	Moreover, for \(v\colon \Gamma \to \Rbbb\) with 
	\(\Supp v \cap \Gamma_i \neq \emptyset\), 
	denote \(\hat{v} \isdef v \circ \tauvec_i\).
	Then, we assume that for
	\begin{align*}
		\langle \psi_{\lambdabfm}, v \rangle_{\Gamma_i} = \int_{\square}
		\hat{\psi}_{\hat{\lambdabfm}}^{(i)}(\hat{\xvec}) \hat{v}(\hat{\xvec})
		\omega_i(\hat{\xvec}) \dint \hat{\xvec}
	\end{align*}
	the usual cancellation estimates hold, i.e., we assume that
	\(\hat{\psi}_{\hat{\lambdabfm}}^{(i)}\) can be treated like a wavelet of level
	\(\jbfm\) with \(\tilde{d}\) vanishing moments on the unit square.
\end{assumption}

Note that, in the case of isotropic wavelets, such bases have been 
constructed in \cite{HS06}. As a consequence of Assumption
\ref{ass:patchwise_cancellation_property}, in view of
\begin{align*}
	\big\langle \psi_{\lambdabfm}, v \big\rangle_{\Gamma}
	= \sum_{i=1}^{P} \big\langle \psi_{\lambdabfm}, 
	v \big\rangle_{\Gamma_i}
	&= \sum_{i=1}^{P} \int_{\square}
	\hat{\psi}_{\hat{\lambdabfm}}^{(i)}(\hat{\xvec}) \hat{v}(\hat{\xvec})
	\omega_{i}(\hat{\xvec}) \dint \hat{\xvec},
\end{align*}
we may treat such duality pairings like a sum of duality pairings on the unit
square, where for each summand,
we are allowed to exploit the usual decay properties of the wavelet involved.

\subsection{Matrix Entry Estimates}

As discussed in \cite{HvR24}, we can bound the entries on the
geometry using the usual decay estimates as well.
We keep in mind that the following procedure also covers the 
case of a globally continuous wavelet basis 
since we have a cancellation property on each patch
because of Assumption \ref{ass:patchwise_cancellation_property}.
Therefore, we may treat every matrix entry as the sum of matrix entries
corresponding to a discontinuous wavelet basis.
However, in order to keep the estimates from Section
\ref{sec:compression_general} valid, 
we also need to assume the following:
\begin{assumption}
	\label{ass:number_of_wavelets}
	Consider two sets \(E, F \subset [0,1]\) and a multiindex \(\jbfm \geq
	\jbfm_0\).
	For \(1 \leq i \leq P\), we assume that
	\begin{itemize}
		\item for any \(x \in E\), there are at most \(\Ocal(1 +
			2^{\jy} |F|)\)
			wavelets supported on \(\tauvec_i(\{x\} \times F)\),
		\item for any \(y \in F\), there are at most \(\Ocal(1 +
			2^{\jx} |E|)\)
			wavelets supported on \(\tauvec_i(E \times \{y\})\),
		\item in total, there are at most \(\Ocal((1 + 2^{\jx} |E|) 
			(1 + 2^{\jy} |F|))\) 
			wavelets supported on \(\tauvec_i(E \times F)\).
	\end{itemize}
\end{assumption}
We will recall only the basic concepts here, 
the most important being the interpretation of the coordinate 
directions over vertices and edges.
For the details, we refer to \cite{HvR24}.

\subsubsection{Wavelets Supported on the Same Patch}

If we consider two wavelets \(\psi_{\lambdabfm}\) and \(\psi_{\lambdabfm'}\)
which are supported on the same patch, the corresponding matrix entry reads as
\begin{align*}
	\big\langle \Lcal \psi_{\lambdabfm'}, \psi_{\lambdabfm} \big\rangle
	= \int_{\square} \int_{\square} \hat{\kappa}_{i,i'} (\hat{\xvec},
	\hat{\xvec}') 
	\hat{\psi}_{\hat{\lambdabfm}}(\hat{\xvec})
	\hat{\psi}_{\hat{\lambdabfm}'}(\hat{\xvec}') 
	\dint \hat{\xvec} \dint \hat{\xvec}',
\end{align*}
where
\begin{align*}
	\hat{\kappa}_{i,i'} (\hat{\xvec}, \hat{\xvec}') 
	\isdef \kappa\big(\tauvec_i(\hat{\xvec}), \tauvec_i(\hat{\xvec}')\big)
	\omega_i(\hat{\xvec}) \omega_i(\hat{\xvec}').
\end{align*}
Since the modified kernel \(\hat{\kappa}_{i,i}\) admits the same decay 
behaviour as \(\kappa\),
we can use \eqref{eq:weight_per_patch} and the Lipschitz continuity of
\(\tauvec_i\) to conclude that such entries can 
be estimated as on the unit square.
However, depending on the geometry, the constants involved may become larger.

\subsubsection{Wavelets Supported on Patches With a Common Edge}

If we consider two wavelets \(\psi_{\lambdabfm}\) and \(\psi_{\lambdabfm'}\), 
where \(\Supp \psi_{\lambdabfm} \subset \Gamma_i\),
\(\Supp \psi_{\lambdabfm'} \subset \Gamma_{i'}\) and \(\Gamma_i\) and
\(\Gamma_{i'}\) share a common edge \(\Sigma\), 
we need to interpret the coordinate directions on \(\Gamma_i \cup
\Gamma_{i'}\).
Without loss of generality, we may assume that the common edge 
satisfies \(\Sigma = \tauvec_i(\{1\} \times [0,1]) = \tauvec_{i'}(\{0\} \times
[0,1])\).
By translating \(\tauvec_{i'}\) by \((1, 0)^{\intercal}\) and
glueing the two parametrisations together, we obtain a Lipschitz continuous
parametrisation \(\tauvec\colon \square^{\Sigma} \isdef
[0,2] \times [0,1] \to \Gamma_i \cup \Gamma_{i'}\). 
Therefore, it is natural to treat the direction
\(\mathrm{x}\) as the direction across the edge \(\Sigma\) and
\(\mathrm{y}\) as the direction parallel to \(\Sigma\).
By the Lipschitz continuity of \(\tauvec\), 
we know that the distance on \(\square^{\Sigma}\) is equivalent to the geodesic
distance on \(\Gamma_i \cup \Gamma_{i'}\) and thus to the euclidean distance in
\(\Rbbb^3\).

Meanwhile, the corresponding matrix entry can be written as
\begin{align*}
	\big\langle \Lcal \psi_{\lambdabfm'}, \psi_{\lambdabfm} \big\rangle
	&= \int_{\square} \int_{\square} \kappa\big( \tauvec_i(\hat{\xvec}),
	\tauvec_{i'}(\hat{\xvec}') \big)
	\hat{\psi}_{\hat{\lambdabfm}}(\hat{\xvec})
	\hat{\psi}_{\hat{\lambdabfm}'}(\hat{\xvec}')
	\omega_{i}(\hat{\xvec}) \omega_{i'}(\hat{\xvec}')
	\dint\hat{\xvec} \dint\hat{\xvec}' \\
	&= \int_{\square^{\Sigma}} \int_{\square^{\Sigma}}
	\hat{\kappa}_{i,i'}(\hat{\xvec}, \hat{\xvec}')
	\hat{\psi}_{\hat{\lambdabfm}}(\hat{\xvec})
	\hat{\psi}_{\hat{\lambdabfm}'}\big(\hat{\xvec}' - 
		\big[
			\begin{smallmatrix}
				1\\
				0\\
			\end{smallmatrix}
		\big]
	\big) \dint\hat{\xvec} \dint\hat{\xvec}',
\end{align*}
where
\begin{align*}
	\hat{\kappa}_{i,i'}(\hat{\xvec}, \hat{\xvec}')
	\isdef \kappa\big(\tauvec(\hat{\xvec}), \tauvec(\hat{\xvec}')\big)
	\omega_{i}(\hat{\xvec}) \omega_{i'}\big(\hat{\xvec}' -
		\big[
			\begin{smallmatrix}
				1\\
				0\\
			\end{smallmatrix}
		\big]
	\big).
\end{align*}
Moreover, we remark that in the current setting, we have \(\hat{\xvec} \in
\square\) and \(\hat{\xvec}' \in [1, 2] \times [0,1]\).
As it was shown in \cite{HvR24}, such matrix entries can be compressed in the
same manner as on the unit square. 
Indeed, if \(\delta(\lambdabfm, \lambdabfm') > 0\), 
then we may apply the first compression provided \(\delta(\lambdabfm,\lambdabfm')\)
is sufficiently large since \(\tauvec\) is piecewise smooth 
and the distances are equivalent.

On the other hand, if we want to apply a second compression in the direction of
\(\mathrm{x}\), we have two situations: 
If the two singular supports touch each other, 
the second compression cannot be applied.
If not, then, 
since the wavelets under consideration are located in different patches, 
we know that for the decomposition of the larger wavelet into \(\fxt\) and
\(\fxb\), we have \(\fxt \equiv 0\).
Therefore, we only need to consider the complement function \(\fxb\),
for which the same estimates as on the unit square can be applied.

Last, for the second compression in \(\mathrm{y}\), 
we remark that both smooth parts of \(\tauvec\) agree on \(\Sigma\).
Therefore, recalling the proof of Lemma \ref{lm:near_field_previous},
we see that for \(\hat{\xvec} \in \square\) and 
\(\hat{\xvec}' \in [1,2] \times [0,1]\),
all derivatives of the modified kernel \(\tilde{\kappa}_{\mathrm{y}}\), 
which depends on \(y\) and \(y'\) only,
exist and that, for \(y \neq y'\), 
they admit the same asymptotic decay behaviour as on the unit square.

\subsubsection{Wavelets Supported on Patches With a Common Vertex}

In the case where two wavelets \(\psi_{\lambdabfm}\) and \(\psi_{\lambdabfm'}\)
are supported on two patches \(\Gamma_i\) and \(\Gamma_{i'}\) which share a
common vertex \(\xivec\),
for which we may without loss of generality assume that \(\xivec =
\tauvec_i((1, 1)^{\intercal}) = \tauvec_{i'}((0, 0)^{\intercal})\),
we translate \(\tauvec_{i'}\) by \((1, 1)^{\intercal}\) and glue the two
parametrisations together.
The continuation onto (possibly multiple) neighbouring patches 
results in a Lipschitz continuous parametrisation \(\tauvec\colon
\square^{\xivec} \isdef [0, 2]^2 \to \Gamma\), such that
\(\tauvec\big|_{\square} = \tauvec_{i}\) and \(\tauvec\big|_{[1,2]^2} =
\tauvec_{i'}(\cdot - (1, 1)^{\intercal})\).

Also here, we remark that the distances on \(\square^{\xivec}\) 
and \(\Rbbb^3\) are equivalent by the Lipschitz continuity of \(\tauvec\).
Hence, we can apply the first compression as on the parameter domain.
Moreover, similar to the second compression across an edge, 
we remark that the two supports either touch each other, 
in which case we cannot compress at all,
or that it suffices to consider the complement function \(\fxb\) or \(\fyb\)
in the decomposition of the larger wavelet, respectively.
Since this holds true for both coordinate directions, 
we may choose the better one and argue as on the unit square as well.

\subsubsection{Wavelets Supported on Well-Separated Patches}

In the case of well-separated patches, we do only consider 
the first compression, which is enough in this case. Indeed, 
after a suitable rescaling of the geometry, we can assume that
\begin{align*}
	\min\big\{\! \Dist(\Gamma_i, \Gamma_{i'}) : 
	\Gamma_i \cap \Gamma_{i'} = \emptyset \big\} 
	\geq 1.
\end{align*}
In particular, in view of \eqref{eq:cutoff_parameter_first_general},
we need to distinguish two cases:
If \(\Bcal_{\jbfm,\jbfm'} = 2^{-\minm(\jbfm,\jbfm')}\), 
we can compress all such entries and if not, then there at most
\(\Ocal(r 2^r)\) such entries in every row or column due to Theorem
\ref{thm:complexity_first} and Assumption \ref{ass:number_of_wavelets}.

\begin{remark}
	In accordance with \cite{Cos88} and the references therein,
	the shift property of the classical boundary integral operators 
	is limited by \(\sigma = \nicefrac{1}{2}\) if \(\Gamma\) is only 
	Lipschitz smooth. 
	In this case, Theorem \ref{thm:s_star_compressibility}
	also limits the maximal \(s^{\star}\) to \(\nicefrac{1}{2}\).
	Nonetheless, since we consider a piecewise smooth Lipschitz surface
	\(\Gamma\),
	the classical boundary integral operators are continuous as operators
	\(H^{q + s}(\Gamma) \to H^{-q + s}(\Gamma)\) for all \(-\nicefrac{1}{2} < s \leq
	\sigma_{0}\), where \(\sigma_{0} > \nicefrac{1}{2}\) 
	depending on the global smoothness of \(\Gamma\), 
	cf.\ \cite[Remark 3.1.18]{SS11}.
	Note that the above restriction is also present when isotropic wavelets are used,
	cf.\ \cite{Ste04}. 
\end{remark}

\subsection*{Acknowledgement}

This research has been supported by the Swiss National Science 
Foundation (SNSF) through the project “Adaptive Boundary Element 
Methods Using Anisotropic Wavelets” under Grant Agreement No.\ 
200021\_192041.

\bibliographystyle{plain}
\bibliography{literature}

\end{document}